\documentclass[11pt]{article}
\usepackage[utf8]{inputenc}
\usepackage[english]{babel}
\usepackage{ulem}
\usepackage{graphicx}
\usepackage{amsmath,esint}
\usepackage{amssymb}
\usepackage{amsfonts}
\usepackage{amsthm}
\usepackage[left=2.1cm,top=1.5cm,right=2.1cm, bottom=2.2cm,letterpaper]{geometry}
\usepackage{mathrsfs}
\usepackage{caption}
\usepackage{subcaption}
\usepackage{latexsym}
\usepackage{tcolorbox}
\usepackage{mleftright}
\usepackage{enumitem}
\usepackage{float}
\usepackage{hyperref}
\usepackage[all]{hypcap}
\usepackage{autonum}
\usepackage{url}
\usepackage[toc]{appendix}
\newtheorem{theorem}{Theorem}[section]

\newtheorem{lemma}[theorem]{Lemma}
\newtheorem{definition}[theorem]{Definition}
\newtheorem{proposition}[theorem]{Proposition}

\newtheorem{remark}[theorem]{Remark}
\numberwithin{equation}{section}
\numberwithin{figure}{section}

\newcommand{\N}{\mathbb{N}}
\newcommand{\R}{\mathbb{R}}

\newcommand{\esssup}{\mathop{\mathrm{ess\,sup}}}
\newcommand{\essinf}{\mathop{\mathrm{ess\,inf}}}
\makeatletter
\renewcommand*\env@matrix[1][*\c@MaxMatrixCols c]{%
	\hskip -\arraycolsep
	\let\@ifnextchar\new@ifnextchar
	\array{#1}}
\makeatother
\def\const{{\mathrm{const}}}
\hyphenation{su-rroun-ding ve-lo-ci-ties boun-da-ry ge-ne-ra-tion mod-i-fied}

\def\neweq#1{\begin{equation}\label{#1}}
	\def\endeq{\end{equation}}

\def\div{\hbox{\rm div}\,}

\def\loc{{\mathrm{loc}}}
\def\R{{\mathbb R}}
\def\N{{\mathbb N}}
\def\Fe{{\mathcal F}}
\def\Ha{{\mathscr H}}

\def\ve{{\mathbf v}}
\def\ue{{\mathbf u}}

\def\UU{{\mathbf U}}

\def\hp{\widehat{p}}
\def\cc{{\tilde c}}

\def\tu{{\tilde{\mathbf u}}}
\def\fe{{\mathbf f}}
\def\vp{{\varphi}}

\def\bbe{{\boldsymbol{\eta}}}

\def\n{{\mathbf n}}
\def\0{{\mathbf 0}}

\def\WO{{\widetilde\Omega}}

\def\WD{{\widetilde D}}

\def\GG{{\Gamma}}
\def\OO{{\Omega}}
\def\aa{{\mathbf a}}
\def\be{{\mathbf b}}
\def\ee{{\mathbf e}}
\def\we{{\mathbf w}}
\def\ae{{\mathbf a}}
\def\e{{\varepsilon}}

\def\fe{{\mathbf f}}

\date{}
\begin{document}

\title{The steady Navier-Stokes equations in a system\\  of unbounded channels with sources and sinks}
\author{Filippo Gazzola -- Mikhail Korobkov -- Xiao Ren -- Gianmarco Sperone }
\maketitle

\begin{abstract}
\noindent	
The steady motion of a viscous incompressible fluid in a junction of unbounded channels with sources and sinks is modeled through the Navier-Stokes equations under inhomogeneous Dirichlet boundary conditions. In contrast to many previous works, the domain is not assumed to be simply-connected and the fluxes are not assumed to be small. In this very general setting, we prove the existence of a solution with a
uniformly bounded Dirichlet integral in every compact subset. This is a~generalization of the classical Ladyzhenskaya--Solonnikov result obtained under the additional assumption of zero boundary conditions.
 For small data of the problem we also prove the unique solvability and attainability of Couette--Poiseuille flows at infinity. The main novelty of our approach is the proof of the corresponding  Leray--Hopf-type inequality by 
Leray's  \textit{reductio ad absurdum} argument
(since the standard Hopf cut-off extension procedure does not work for general boundary data). For this contradiction approach, we use some fine properties of weak solutions to the Euler system based on Morse--Sard-type theorems in Sobolev spaces obtained by
Bourgain, Korobkov \& Kristensen.
\end{abstract}

\tableofcontents

\section{Introduction}

In the plane $\mathbb{R}^2$ consider the strip $\Omega \doteq \R\times(0,1)$. Suppose that a viscous incompressible fluid (having constant kinematic viscosity equal to $1$) is allowed to flow in stationary regime in $\Omega$, in the absence of external forces. The velocity vector field $\mathbf{u} : \Omega \longrightarrow \mathbb{R}^2$ and scalar pressure $p : \Omega \longrightarrow \mathbb{R}$ characterizing the motion of the fluid are governed by the steady-state Navier-Stokes equations in $\Omega$ under no-slip boundary conditions on $\partial \Omega$:
\begin{equation}\label{poiseuille2d}
	\left\{
	\begin{aligned}
		&- \Delta \mathbf{u} + (\mathbf{u} \cdot \nabla) \mathbf{u} + \nabla p = \0 \, , \quad \nabla \cdot \mathbf{u} = 0 \ \ \mbox{ in } \ \ \Omega \, , \\[4pt]
		&\mathbf{u} = \0 \ \ \mbox{ on } \ \ \partial \Omega \, .
	\end{aligned}
	\right.
\end{equation}
It is well-known that, for any $\Fe \in \mathbb{R}$, an explicit solution to \eqref{poiseuille2d} is given by the \textit{Poiseuille flow} having \textit{transverse flux} $\Fe$, meaning that the pair
\begin{equation} \label{eq:Poi}
	\ue(x,y)= ( 6\Fe(y - y^2), 0) \qquad \text{and} \qquad p(x,y)=-12\Fe x \qquad \forall (x,y) \in \overline{\Omega} \, ,
\end{equation}
forms a classical solution to \eqref{poiseuille2d}, see \cite[Chapter II]{landau}. Moreover, since $\ue$ is divergence-free \eqref{poiseuille2d}$_1$, the transverse flux is constant through any cross section of the strip $\Omega$, that is,
$$
\int_0^1 (\ue(x,y) \cdot \ee_1 ) \, dy = 6\Fe \int_0^1 (y - y^2) \, dy = \Fe \qquad \forall x\in \mathbb{R} \, .
$$

Suppose now that the strip $\Omega$ is replaced by a smooth, unbounded and \textit{distorted} channel like the one depicted in Figure \ref{fig:lerayoriginal} below, having two \textit{rectangular outlets} $\Omega_1$ and $\Omega_2$ outside a bounded set $\Omega_{0}$. This means that, in terms of some rigid transformation $z \in \mathbb{R}^2 \mapsto (x_j(z), y_j(z))  \in \mathbb{R}^2$, they can be written as
\begin{equation}  \label{ch0}
	\Omega_j= \{ z \in\R^2 \ | \ x_j(z)>0 \, , \quad 0 < y_j(z) < h_j \ \} \quad \text{for some} \ h_{j} > 0 \, , \ \forall j \in \{1,2 \} \, .
\end{equation}
\vspace{-7mm}
\begin{figure}[H]\begin{center}
		\includegraphics[scale=0.7]{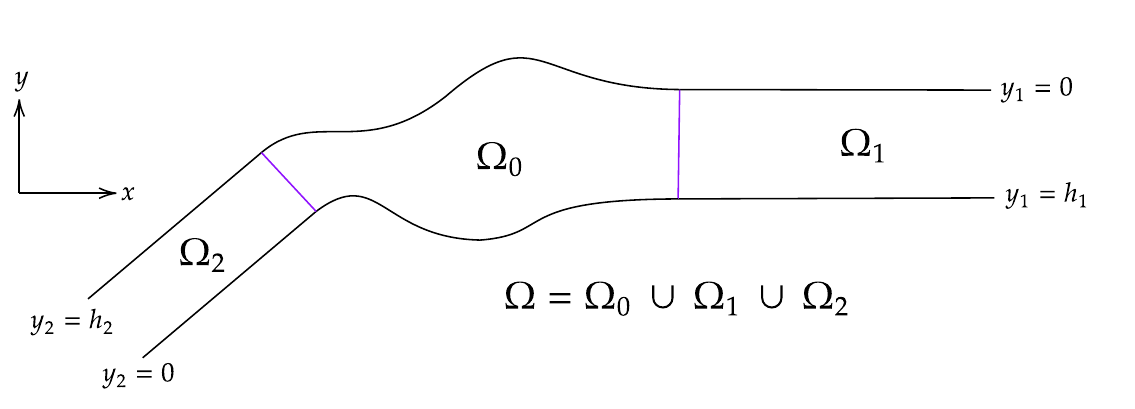}
	\end{center}
	\vspace{-5mm}
	\caption{A smooth, unbounded and distorted channel.}
	\label{fig:lerayoriginal}
\end{figure}
\noindent
Let $\Omega \subset \mathbb{R}^2$ be the domain depicted in Figure \ref{fig:lerayoriginal}, corresponding to an \textit{admissible domain} in the sense
of Amick \cite[Definition 1.1]{amick1977steady}. The pair $(\ue,p)$ in \eqref{eq:Poi} is no longer a solution to \eqref{poiseuille2d} in the distorted
channel $\Omega$. Nevertheless, given $\Fe \in \mathbb{R}$, each outlet $\Omega_1$ and $\Omega_2$ has an associated Poiseuille flow,
with flux $\mathcal{F}$, given explicitly in local coordinates by
\begin{equation}\label{Pflow}
\mathbf{P}_j(x_{j}, y_{j}) \doteq \left( \dfrac{6 \mathcal{F}}{h^{3}_{j}} y_{j} (h_{j} - y_{j}) , 0 \right) \quad \text{and} \quad
q_{j}(x_{j},y_{j})=- \dfrac{12\Fe}{h^{3}_{j}} x_{j} \qquad \forall (x_{j}, y_{j}) \in \overline{\Omega_{j}} \, , \ \forall j \in \{1,2 \} \, .
\end{equation}
During his visit to Leningrad in 1958 \cite[Remark 1.6]{galdi2008hemodynamical}, Jean Leray proposed \footnote{Quite interestingly, there seems to be no article written by Jean Leray containing a description of this problem that, nowadays, bears his name. One of the oldest mentions to Leray's problem can be found in the survey notes by Robert Finn from 1965, see \cite[Section 7]{finn1965}.} to Olga Ladyzhenskaya the problem of finding a classical solution $(\mathbf{u}, p) \in \mathcal{C}^{2}(\overline{\Omega}) \times \mathcal{C}^{1}(\overline{\Omega})$ to
\eqref{poiseuille2d} in this domain $\Omega$, with the following additional condition at infinitely large distances in the outlets:
\begin{equation} \label{poinf}
\ue \longrightarrow \mathbf{P}_j \quad \text{as} \quad |z| \to \infty \quad \text{in} \quad \Omega_{j} \, , \ \ \forall j \in \{1,2 \} \, ,
\end{equation}
that is, the velocity field should approach, in the far field, the corresponding Poiseuille flow in every outlet. After the unsuccessful
attempt by Ladyzhenskaya \cite{ladyzhenskaya1959stationary} in 1959, the problem was tackled many years later by Fraenkel
\cite{fraenkel1973theory,fraenkel1975theory} using perturbation methods to analyse a symmetric channel with slowly curving walls.
A breakthrough was obtained by Amick \cite[Theorem 3.8]{amick1977steady} in 1977, who provided an affirmative answer to \textit{Leray's problem}
for sufficiently small values of $\Fe$; his results also cover the cases of domains $\Omega$ {\it containing obstacles} (hence, non-simply
connected) and with {\it multiple rectangular outlets} (system of unbounded channels), see \cite[Remarks (b)-(c), p.498]{amick1977steady}.
All the boundary is covered by {\it homogeneous} (no-slip) Dirichlet boundary conditions and the equations contain {\it no external force}.
Amick does not address the uniqueness of the solution, nor the existence of a solution satisfying \eqref{poinf} for large values of $\Fe$,
problems that are still open up to date. In the smallness regime, we highlight results by Galdi \cite[Section XIII.2]{galdi2011introduction}
regarding unique solvability of Leray's problem, and those of Amick \cite{amick1978properties} and Horgan \& Wheeler \cite{horgan1978spatial}
ensuring exponential pointwise convergence to the Poiseuille flows in the far field.\par
In spite of these important contributions, a full answer to the original
Leray problem remains as one of the most challenging, yet unsolved, problems in Mathematical Fluid Mechanics: to prove existence and
uniqueness of solutions to \eqref{poiseuille2d}-\eqref{poinf} for arbitrary values of $\Fe \in \mathbb{R}$ in such domains, together with a precise description of the asymptotic behavior of the solutions at large distances, see \cite[Chapter XIII]{galdi2011introduction} and \cite{pileckas2007navier}.\par

The best results for the case of arbitrary large fluxes were achieved by Ladyzhenskaya \& Solonnikov \cite{ladyzhenskaya1980finding} in 1979 
(see \cite{ladyzhenskaia1979determination} for a preliminary version and \cite{ladyzhenskaya1983determination} for the English translation to which we refer in the sequel). 
They considered a \textit{junction} of unbounded channels of different shape, both in 2D and 3D, in particular, when the simply connected fluid domain $\Omega$ possesses,
outside some compact region, a finite number $J\geq1$ of rectangular outlets to infinity $\Omega_j$ ($j\in\{1,\dots,J\}$).
See Figure~\ref{fig:ladysol1} for an illustrative example of such 2D region.
\begin{figure}[H]\begin{center}
		\includegraphics[scale=0.7]{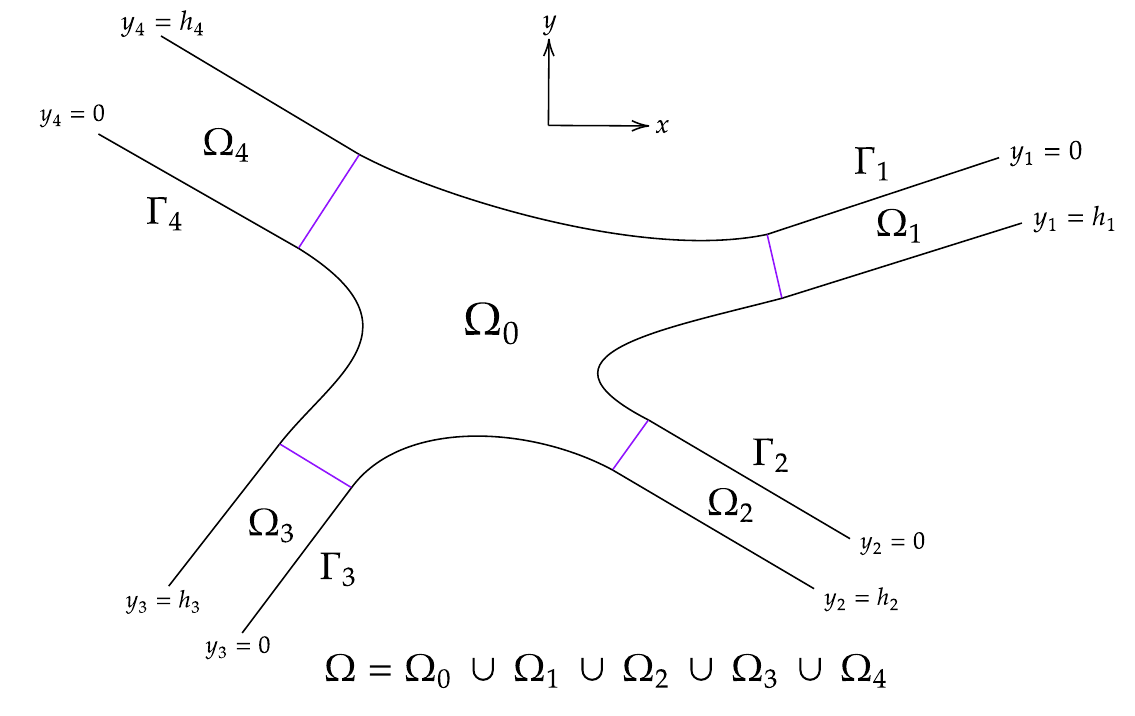}
	\end{center}
	\vspace{-7mm}
	\caption{Junction of unbounded channels considered by Ladyzhenskaya \& Solonnikov, with $J=4$.}
	\label{fig:ladysol1}
\end{figure}
Instead of imposing the asymptotic behavior \eqref{poinf} on each outlet, they suggested to seek smooth solutions
$(\mathbf{u}, p) \in \mathcal{C}^{2}(\overline{\Omega}) \times \mathcal{C}^{1}(\overline{\Omega})$ to \eqref{poiseuille2d} with a
{\it given transverse flux} across each rectangular end, that is, \eqref{poiseuille2d} was supplemented with the Heywood flux conditions
\cite{heywood2006auxiliary,kapitanskii1983spaces}:
\begin{equation} \label{fluxcond1}
\int_{\sigma_j} \mathbf{u} \cdot \ee_1^j = \Fe_j \qquad \forall j \in \{1,\dots,J \} \, ,
\end{equation}
where $\sigma_j \subset \overline{\Omega_{j}}$ denotes any straight cross-section segment  orthogonal to the unit vector $\ee_1^j \in \mathbb{R}^2$
of the outlet $\Omega_{j}$, whereas the flows
$\Fe_j\in\mathbb{R}$ are given and satisfy the \textit{necessary inflow/outflow condition}
\begin{equation} \label{goc1}
\sum_{j=1}^{J} \Fe_j = 0\, .
\end{equation}
In \cite[Theorem 3.1]{ladyzhenskaya1983determination}, the Authors prove the existence of a solution to \eqref{poiseuille2d}-\eqref{fluxcond1}
for any $\Fe_1,...,\Fe_J \in \mathbb{R}$ satisfying \eqref{goc1}
and they show that the Dirichlet integral of any solution remains uniformly bounded in every portion of each outlet having unit length,
as well as the linear growth of such quantity along each outlet. Such estimates on the Dirichlet norm are used in
\cite[Section 2]{ladyzhenskaya1983determination} to prove unique solvability of \eqref{poiseuille2d}-\eqref{fluxcond1} and the asymptotic
behavior \eqref{poinf} at infinity, under a {\it smallness assumption on the fluxes}. Ensuring this asymptotic behavior in the far field
for arbitrarily large fluxes constitutes nowadays the so-called
\textit{Leray--Ladyzhenskaya problem}, see the recent contributions \cite{wang2022existence,wang2022uniform}. A deep consequence of the works by Ladyzhenkaya \& Solonnikov is the realization that the global uniqueness of the Poiseuille flow \eqref{eq:Poi} in a strip implies the unrestricted solvability of the original Leray problem \eqref{poiseuille2d}-\eqref{poinf} \footnote{In this sense, we quote a rather \textit{mysterious} statement from \cite[page 734]{ladyzhenskaya1983determination}: “It seems to us appropriate
to formulate it [Leray's problem] in a broader manner since for large flows, even in cylindrical pipes, one
observes solutions which are different from the Poiseuille flows''.}.

The main goal of the present article is to further generalize and extend the results of Ladyzhenskaya \& Solonnikov,
by considering a non-simply connected junction of unbounded channels and inhomogeneous Dirichlet boundary conditions (thereby allowing for the presence of interior sinks and sources
in a compact region). In particular, under these conditions the Poiseuille flow \eqref{Pflow} has to
be replaced by a {\it Couette-Poiseuille flow}, see \eqref{CP} below. Under a general outflow constraint, we prove the existence of a solution with a
uniformly bounded Dirichlet integral in every compact subset, see Theorem \ref{th:main}. For small data of the problem, we also prove unique solvability and attainability of Couette-Poiseuille flows at infinity, see, respectively, Theorem \ref{th:main2} and Theorem \ref{th:main3}. 

For compactly supported boundary data $\textbf{a}$ with sufficiently small fluxes across the obstacles $\Sigma_i$, the existence assertion of
Theorem~\ref{th:main} can be deduced from \cite{kaulakyte2012nonhomogeneous}. We also highlight previous works by Neustupa \cite{neustupa2009steady,neustupa2010new} where an additional extension assumption on $\textbf{a}$ was imposed. Moreover, for the plane symmetric case with one outlet at infinity
($J=1$), the existence assertion of Theorem~\ref{th:main} follows from the~results in \cite{chipot2017nonhomogeneous}. However, the methods of these
papers do not allow to cover the~considered case of general boundary data~$\ae$ with large fluxes.

In the proofs, we use a rather nontrivial combination of the Ladyzhenskaya--Solonnikov approach from~\cite{ladyzhenskaya1983determination} (where the homogeneous case $\ae\equiv\0$ was considered) with Leray's  \textit{reductio ad absurdum} argument, which is used in some original way in order to prove an analog of the~Leray--Hopf inequality. Note that  
Leray's  contradiction argument deals with limiting Euler solution, which is studied by 
subtle methods of real analysis developed in~\cite{BKK13} and applied previously for the Leray problem in planar bounded domains in~\cite{kpr,korobkov2015solution,korobkov2024steady}. 
We refer to Section \ref{glance} for a brief description of the main novelties and ideas involved in the proofs of our results.

\pagebreak

\section{Main results}

For every $r>0$ let $B_{r} \subset \R^2$ be the open disk of radius $r$, centered at the origin. We introduce

\begin{definition} \label{admissibledom}
Given two integers $I \geq 0$ and $J \geq 1$, a~set $\Omega\subset\R^2$ is called \textbf{admissible} if it is open, unbounded,  connected with a boundary of class $\mathcal{C}^2$, and $\Omega$ can be decomposed as the disjoint union of $J+1$ regions in the following way:
\begin{equation} \label{defomega}
	\Omega = \Omega_0 \cup \biggl(\bigcup_{j=1}^J \Omega_j\biggr)\, ,
\end{equation}
where each $\Omega_j \subset \R^2$, $j \in \{1,\dots,J \}$, is an open \textit{rectangular outlet towards infinity} so that, up to some
rigid transformation $z \in \mathbb{R}^2 \mapsto (x_j(z), y_j(z))  \in \mathbb{R}^2$ and for some $h_{j} > 0$, it can be represented as
\begin{equation}\label{Omegaj}
\Omega_j = \{ z \in\R^2 \ | \ x_j(z)>0 \, , \quad 0 < y_j(z) < h_j \ \} \qquad \forall j \in \{1,\dots,J \} \, .
\end{equation}
In turn, $\Omega_0 \subset \R^2$ is a~bounded and {\rm(}possibly{\rm)} non-simply connected set
$$
\Omega_0 \doteq \Omega \setminus \biggl(\bigcup_{j=1}^J \Omega_j\biggr) \subset B_{R}\qquad \mbox{for some }R>0
$$
and $\Omega_{0}$ surrounds several compact sets $\Sigma_i$, $i\in\{1,\dots,I\}$,  called the \textbf{obstacles}, with boundaries of class~$\mathcal{C}^2$.
\end{definition}	
In Figure \ref{fig:4} we depict an example of an admissible domain $\Omega$, according to Definition \ref{admissibledom}. We refer to Section~\ref{physics} for two physical models described by admissible domains.
\begin{figure}[H]\begin{center}
		\includegraphics[scale=0.7]{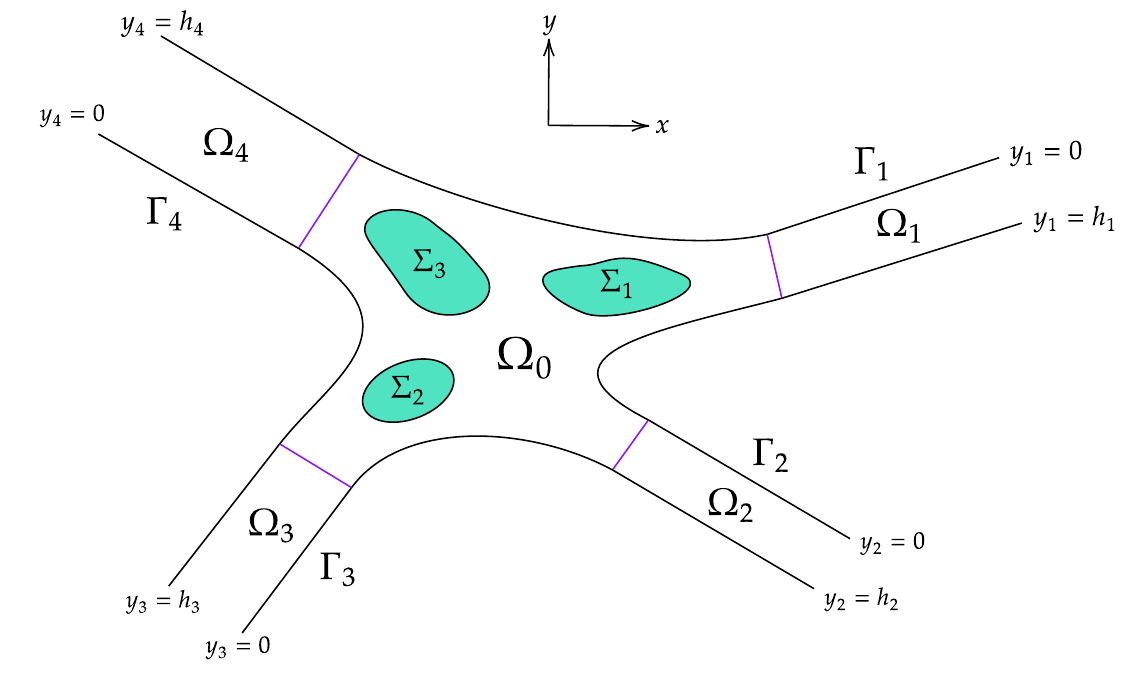}
	\end{center}
	\vspace{-7mm}
	\caption{An admissible domain $\Omega$ with $I=3$ and $J=4$. }
	\label{fig:4}
\end{figure}

Therefore, for every $j \in \{1,\dots,J \}$, the boundary of $\Omega_j$ consists of three components, that is,
\begin{equation}\label{bdd0}
	\partial\Omega_j=\sigma_j^0\cup\GG_j^0\cup\GG_j^1 \, ,
\end{equation}
where, in local coordinates,
$$
\begin{array}{c}
\sigma_j^0 \doteq \{ z \in\R^2 \ | \ x_j(z)=0 \, , \quad 0 < y_j(z) < h_j \, \} \, , \\[6pt]
\GG_j^0 \doteq \{ z \in\R^2 \ | \ x_j(z) \geq 0 \, , \quad y_j(z)= 0 \, \} \, ,\qquad
\GG_j^1 \doteq \{ z \in\R^2 \ | \ x_j(z) \geq 0 \, , \quad y_j(z)= h_{j} \, \} \, ,
\end{array}
$$
see Figure \ref{fig:1} for an~illustration.
\begin{figure}[H]\begin{center}
\includegraphics[scale=0.7]{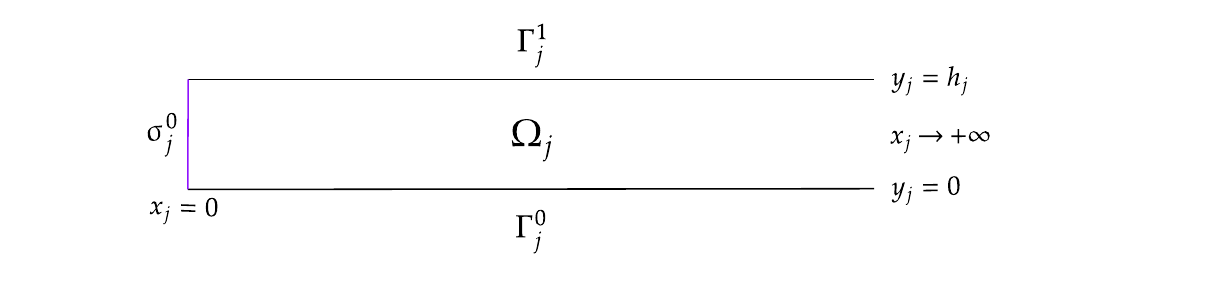}
\end{center}
\vspace{-5mm}
\caption{Decomposition of the boundary of an outlet $\Omega_{j}$, for $j \in \{1,\dots,J \}$.}\label{fig:1}
\end{figure}
\noindent
Moreover, the whole boundary of $\Omega$ consists of $I+J$ disjoint connected components: $I$ bounded closed curves
$\partial \Sigma_i \subset \partial \Omega_0$, for $i \in \{1,\dots,I \}$, and $J$ unbounded lines $\Gamma_{j}$, for $j \in \{1,\dots,J \}$. We then write
$$
\partial\Omega = \Gamma\cup\partial \Sigma  \doteq \biggl(\bigcup\limits_{j=1}^J\Gamma_j\biggr) \cup \biggl(\bigcup\limits_{i=1}^I \partial \Sigma_i\biggr),
$$
and adopt the following convention agreements (compare with \eqref{bdd0}):
\begin{equation}\label{bdd1}
\Gamma_{J+1}=\Gamma_1\, , \quad \text{and} \quad \Gamma_j^0\subset\Gamma_j \, , \quad \Gamma_j^1\subset\Gamma_{j+1} \qquad \forall j \in \{1,\dots,J \} \, .
\end{equation}

Let $\fe\in L_\loc^{2}(\overline\Omega)$ be an external force acting on the fluid and satisfying the uniform bound
\begin{equation}  \label{uef}
	\int_{\Omega \cap B(z_0)}|\mathbf{f}|^2\le C_{*} \qquad \forall z_0\in\R^2\, ,
\end{equation}
where $B(z_0) \subset \mathbb{R}^2$ is the unit disc centered at $z_0\in\R^2$, for some $C_{*}>0$ that is independent of $z_{0}$.
We consider a boundary velocity $\ae\in W^{3/2,2}_\loc(\partial\Omega)$ such that
\begin{equation}  \label{bda}
\ae(z)=\be_j^0\quad\forall z\in\Gamma_j^0 \, , \qquad \ae(z)=\be_j^1\quad\forall z\in\Gamma_j^1\, ,\qquad \forall j \in \{1,\dots,J \} \, ,
\end{equation}
where $\be_{j}^m\in\R^2$ are constant vectors obeying an {\it impermeability condition} on $\Gamma \setminus \partial \overline{\Omega_0}$, that is,
\begin{equation}  \label{bda-1}
	\be_{j}^m\cdot\n=0 \qquad \forall m \in \{0,1 \} \, , \ \forall j \in \{1,\dots,J \} \, .
\end{equation}
Hereafter, $\n \in\R^2$ denotes the outward unit normal to $\partial\Omega$ (exiting from $\Gamma_j^0$ and $\Gamma_j^1$ for $j \in \{1,\dots,J \}$,
entering in $\Sigma_{i}$ for $i \in \{1,\dots,I \}$). In local coordinates $(x_j,y_j)$ we then have
\begin{equation} \label{slipbcori}
\forall m\in\{0,1\}\, \ \forall j\in\{1,\dots,J \}\qquad\exists\, b_j^m\in\mathbb{R}\quad\mbox{ such that }\quad\be_j^m = (b_j^m, 0)\, .
\end{equation}
Furthermore, let $\Fe_j\in\R$ ($j \in \{1,\dots,J \}$) be given and such that
\begin{equation} \label{ch5}
\int_{\partial\Omega} \ae\cdot\n+\sum\limits_{j=1}^J\Fe_j = 0 \, .
\end{equation}
The central theme of this work is the following boundary-value problem associated to the steady-state Navier-Stokes equations in $\Omega$:
\begin{equation}  \label{NSE}
	\left\{
	\begin{aligned}
		&- \Delta \mathbf{u} + (\mathbf{u} \cdot \nabla) \mathbf{u} + \nabla p = \mathbf{f} \, , \quad \nabla \cdot \mathbf{u} = 0 \ \ \mbox{ in } \ \ \Omega \, , \\[4pt]
		&\mathbf{u} = \ae \ \ \mbox{ on } \ \ \partial \Omega \, , \\[4pt]
		&  	\int_{\sigma_j^0} \mathbf{u} \cdot \ee_1^j = \Fe_j \qquad \forall j \in \{1,\dots,J \} \, .
	\end{aligned}
	\right.
\end{equation}
Identity \eqref{NSE}$_2$
prescribes the boundary velocity $\ae$ on $\partial \Omega$ (inhomogeneous Dirichlet boundary conditions), while \eqref{NSE}$_3$ imposes that the
transverse flux along each outlet $\Omega_j$ equals $\Fe_j\in\R$, for every $j \in \{1,\dots,J \}$. 
Notice that, in analogy to \eqref{goc1}, only the general outflow condition \eqref{ch5} is assumed. Moreover, due to \eqref{bda-1} we have
\begin{equation} \label{fluxa}
	\int_{\partial\Omega} \ae\cdot\n = \int_{\partial\Omega\, \cap\,\partial \Omega_0} \ae\cdot\n =  \sum_{i=1}^{I} \int_{\partial\Sigma_{i}} \ae\cdot\n  +\sum_{j=1}^{J}  \int_{ \Gamma_{j}\cap \partial\Omega_{0} } \ae\cdot\n  \, .
\end{equation}

The first main result of this article reads:

\begin{theorem}[{\bf Existence}]\label{th:main}
Let $\Omega \subset \mathbb{R}^2$ be an admissible domain. Suppose that $\fe\in L_\loc^{2}(\overline\Omega)$ is an external force verifying
\eqref{uef}, $\ae\in W^{3/2,2}_\loc(\partial\Omega)$ is a boundary velocity obeying \eqref{bda}-\eqref{bda-1}, and $\Fe_1,...,\Fe_J\,{\in}\,\mathbb{R}$
satisfying the compatibility condition \eqref{ch5}. Then, the system \eqref{NSE} admits at least one strong solution
$(\ue,p)\in W^{2,2}_\loc(\Omega)\times W^{1,2}_\loc(\Omega)$ satisfying the uniform bound
\begin{equation}  \label{NSE-est}
\|\ue\|_{W^{2,2}(\Omega\cap B(z_0))}+\|\nabla p\|_{L^2(\Omega\cap B(z_0))}\le C_* \qquad \forall z_0\in\R^2\, ,
\end{equation}
where $C_* > 0$ depends on $\Omega$, $\fe$, $\ae$ and $\{\mathcal{F}_j\}^{J}_{j=1}$, but not on $z_0$.
\end{theorem}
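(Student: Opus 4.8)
The plan is to realize the solution as a limit of solutions on bounded truncations $\Omega^{(k)} = \Omega \cap B_{R_k}$ (with $R_k \to \infty$), where on each truncated domain we solve the Navier--Stokes system with the Dirichlet data $\ae$ on the old boundary $\partial\Omega \cap B_{R_k}$ and with suitable Couette--Poiseuille-type data on the newly created cross-sections $\sigma_j^{(k)} \doteq \Omega_j \cap \partial B_{R_k}$ chosen so that the prescribed fluxes $\Fe_j$ are respected. First I would fix, for each outlet, an explicit flux carrier: a divergence-free field $\mathbf{W}_j$ on $\Omega_j$ interpolating between the constant boundary vectors $\be_j^0, \be_j^1$ and carrying flux $\Fe_j$ through every cross-section (the Couette--Poiseuille flow \eqref{CP} referenced in the introduction, suitably cut off near $\Omega_0$), and assemble a global solenoidal extension $\mathbf{A}$ of $\ae$ that coincides with $\mathbf{W}_j$ far out in each outlet. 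Writing $\ue = \mathbf{v} + \mathbf{A}$ reduces \eqref{NSE} on $\Omega^{(k)}$ to a homogeneous-boundary problem for $\mathbf{v}$, solvable by the standard Leray--Schauder / Galerkin argument on the bounded domain $\Omega^{(k)}$ once one has the key a~priori bound.

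The heart of the matter is the \emph{uniform} a~priori estimate: the Dirichlet integral of $\ue$ over $\Omega^{(k)} \cap B(z_0)$ must be bounded independently of both $k$ and $z_0$. This is exactly the Leray--Hopf-type inequality flagged in the abstract. I would prove it by Leray's \textit{reductio ad absurdum}: assuming it fails, one extracts a sequence of solutions whose rescaled energies blow up; after normalization and passage to the limit one obtains a nontrivial weak solution of the stationary \emph{Euler} equations on (a piece of) $\Omega$ with zero boundary velocity, for which the pressure is, by the Bernoulli law, constant on each connected boundary component. The Morse--Sard-type theory in Sobolev spaces of Bourgain--Korobkov--Kristensen (the \cite{BKK13} input) then forces the pressure to be globally constant on the relevant region, and a flux/energy identity derived from the Euler system contradicts the normalization (this is where the outlet geometry, the impermeability condition \eqref{bda-1}, and the constancy of the fluxes $\Fe_j$ through cross-sections enter). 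Combined with the linear-growth bookkeeping along each rectangular outlet in the style of \cite{ladyzhenskaya1983determination}, this yields $\|\nabla\ue\|_{L^2(\Omega^{(k)} \cap B(z_0))} \le C_*$ uniformly.

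With the uniform Dirichlet bound in hand, the remaining steps are comparatively routine. Local elliptic regularity for the Stokes system (using $\fe \in L^2_\loc$ with the uniform bound \eqref{uef}, $\ae \in W^{3/2,2}_\loc$, and the $\mathcal{C}^2$ boundary) upgrades the energy bound to the $W^{2,2} \times W^{1,2}$ estimate \eqref{NSE-est} on each unit ball, again uniformly in $k$ and $z_0$: one writes the equation as $-\Delta\ue + \nabla p = \fe - (\ue\cdot\nabla)\ue$, controls the convection term in $L^{3/2}$ (hence in a Bochner space) via Sobolev embedding from the $H^1$ bound, and applies interior and boundary Stokes estimates on slightly larger balls. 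Finally I would pass to the limit $k \to \infty$: by a diagonal argument the $\ue^{(k)}$ converge weakly in $W^{2,2}_\loc$ and strongly in $W^{1,2}_\loc$ (and the pressures, normalized appropriately on a fixed ball, converge in $W^{1,2}_\loc$), so the limit $(\ue,p)$ solves \eqref{NSE} on all of $\Omega$, inherits the flux conditions \eqref{NSE}$_3$ since fluxes are stable under $W^{1,2}_\loc$ convergence, and satisfies \eqref{NSE-est}. The main obstacle, by a wide margin, is the contradiction argument for the uniform energy bound in the presence of \emph{large} fluxes and \emph{nonzero} boundary data --- precisely the point where the classical Hopf cutoff fails and where the fine Euler-solution analysis is indispensable.
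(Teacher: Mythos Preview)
Your outline captures the broad strokes, but the logical architecture of the contradiction argument is inverted, and this is not a cosmetic issue. You write that after normalization ``one obtains a \emph{nontrivial} weak solution of the stationary Euler equations \ldots\ a flux/energy identity \ldots\ contradicts the normalization.'' That is the bounded-domain mechanism of \cite{korobkov2015solution}, and it does not survive here: the domain is unbounded, the normalized sequence $\widehat{\mathbf w}_k$ has unit Dirichlet energy on all of $\mathcal B_k$ (whose measure grows), and weak convergence in $W^{1,2}_\loc$ can lose all the energy. The paper proves exactly that the Euler limit $\ve$ \emph{is} trivial (Theorem~\ref{Euler=zero}), and stresses that this ``does not entail a contradiction and by no means ends the proof.'' Triviality of $\ve$ is instead fed back as a \emph{tool}: if the global linear growth $J_k^2\le c_0 t_k$ failed, one would deduce $\bigl|\int_{\Omega^2}(\ve\cdot\nabla)\mathbf U\cdot\ve\bigr|\ge\tfrac12$, contradicting $\ve\equiv\0$; and if the Leray--Hopf-type inequality~(\ref{gg10}) on the compact core failed along a subsequence, a second normalized limit $\tilde\ve$ would again satisfy a strictly positive integral bound incompatible with $\tilde\ve\equiv\0$. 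Only after these nested sub-contradictions establish the Leray--Hopf inequality for large $k$ does the Ladyzhenskaya--Solonnikov differential-inequality machinery (the comparison Lemma~\ref{cc-l1}, the forward and backward linear growth Lemmas~\ref{lem:61}--\ref{lem:62}, and the three-case local estimate of Section~\ref{poet7}) produce the uniform local bound. Your phrase ``combined with the linear-growth bookkeeping'' treats these as parallel ingredients, but the contradiction argument lives \emph{inside} the bookkeeping, not before it.

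A second, smaller gap: existence on each bounded truncation is not ``standard Leray--Schauder/Galerkin once one has the key a~priori bound.'' The truncations are multiply connected with arbitrarily large fluxes across the interior obstacles $\partial\Sigma_i$, precisely the setting where the Hopf cutoff fails and the classical a~priori bound is unavailable. The paper invokes the Korobkov--Pileckas--Russo existence theorem \cite[Theorem~5.1.1]{korobkov2024steady} on each $\mathcal B_k$ as a black box; reproving it via Leray--Schauder would require the full \cite{korobkov2015solution} machinery already at this stage. (Also, truncating by round balls $B_{R_k}$ creates corners where $\partial B_{R_k}$ meets the outlet walls; the paper uses invading domains $\mathcal B_k$ with $\mathcal C^2$ arcs $S_{j,k}$ tangent to $\Gamma_j^0\cup\Gamma_j^1$ and translation-invariant in $k$, which is what makes the boundary estimates and the Ladyzhenskaya--Solonnikov integration-in-$t$ argument clean.)
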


\begin{remark}{\rm 
Theorem \ref{th:main} holds under the sole outflow condition \eqref{ch5}, while no assumptions on the size of the data of the problem are required.
The boundary velocity $\ae$ is required to be tangential only on the boundaries of the channels, see (\ref{bda})--(\ref{bda-1}), and not on $\partial\Omega_0$. Moreover, the external force is only required to satisfy $\fe\in L_\loc^{2}(\overline\Omega)$ and \eqref{uef} which allows, in particular, for periodic forces in the channels $\Omega_j$.
For the validity of Theorems \ref{th:main2} and \ref{th:main3} below, we need the stronger assumption that $\fe\in L^{2}(\Omega)$.}
\end{remark}

The possibility of an inhomogeneous boundary velocity $\ae\in W^{3/2,2}_\loc(\partial\Omega)$ (see \eqref{bda}-\eqref{bda-1})
suggests that the expected limiting condition \eqref{poinf} (in the case of small fluxes) should be replaced with a convergence to
\textit{Couette-Poiseuille flows} in each rectangular outlet. Explicitly, in local coordinates we put
\begin{equation} \label{CP}
\mathbf{CP}_j(x_{j},y_{j})\doteq(A_jy_j^2+B_jy_j+b_j^0,0)\, ,\quad\Pi_{j}(x_{j}) \doteq2A_j\, x_{j}\qquad\forall(x_j,y_j)\in\overline{\Omega_{j}}\, ,
\quad\forall j \in \{1,\dots,J \} \, ,
\end{equation}
with
\begin{equation} \label{CPconstants}
A_j \doteq  \frac{3(b^0_j+b^1_j)h_j - 6 \mathcal{F}_j}{h_j^3} \, , \quad B_j \doteq \frac{-(4b_j^0+2b_j^1)h_j + 6 \mathcal{F}_j}{h_j^2} \, , \qquad \forall j \in \{1,\dots,J \} \, .
\end{equation}
In particular, notice that $\mathbf{CP}_j(x_{j},0) = \be_j^0$ and $\mathbf{CP}_j(x_{j}, h_{j}) = \be_j^1$ for every $x_{j} \geq 0$, and also
\begin{equation}  \label{fluxcp0}
\int_{\sigma_j^0} \mathbf{CP}_j \cdot \ee_1^j = \int_{0}^{h_{j}} (A_j y_j^2 + B_j y_j + b_j^0) \, dy_{j} = \mathcal{F}_j \qquad \forall j \in \{1,\dots,J \} \, .
\end{equation}
The pair $(\mathbf{CP}_{j},\Pi_{j})$ solves, in classical sense, the following stationary Navier-Stokes system in $\Omega_{j}$:
\begin{equation} \label{nscouette}
	\left\{
	\begin{aligned}
		&- \Delta \mathbf{CP}_{j} + (\mathbf{CP}_{j} \cdot \nabla) \mathbf{CP}_{j} + \nabla \Pi_{j}= \0 \, , \quad \nabla \cdot \mathbf{CP}_{j} = 0 \ \ \mbox{ in } \ \ \Omega_{j} \, , \\[4pt]
		& \mathbf{CP}_{j} = \be_j^0 \ \ \mbox{ on } \ \ \Gamma_{j}^{0} \, , \qquad \mathbf{CP}_{j} = \be_j^1 \ \ \mbox{ on } \ \ \Gamma_{j}^{1}.
	\end{aligned}
	\right.
\end{equation}

Under suitable smallness conditions on the data of the problem, we obtain an asymptotic description of the behavior of solutions to
\eqref{NSE} along each rectangular outlet. Precisely, the second main result of this article reads:

\begin{theorem}[{\bf Asymptotics}]\label{th:main2} Let $\Omega \subset \mathbb{R}^2$ be an admissible domain. Suppose that $\fe\in L^{2}(\Omega)$
is an external force, $\ae\in W^{3/2,2}_\loc(\partial\Omega)$ is a boundary velocity obeying \eqref{bda}-\eqref{bda-1}, and
$\Fe_1,...,\Fe_J \in \mathbb{R}$ satisfy the compatibility condition \eqref{ch5}. There exists $\delta_{*} > 0$,
depending only on $\Omega$, such that, if
\begin{equation}  \label{uni-1}
\max\limits_{j \in \{1, \dots, J\}} \left(|\be_j^1-\be_j^0|  + |\mathcal{F}_j-b_j^0 \, h_j| \right) <\delta_* \, ,
\end{equation}
then any strong solution $(\ue,p)\in W^{2,2}_\loc(\Omega)\times W^{1,2}_\loc(\Omega)$ to \eqref{NSE} satisfying~\eqref{NSE-est} admits the bound
\begin{equation}  \label{uni-2}
	\|\ue - \mathbf{CP}_j\|_{W^{2,2}(\Omega_j)}+\|\nabla p-\nabla\Pi_j\|_{L^2(\Omega_j)}< \infty \qquad \forall j \in \{1,\dots,J \} \, .	
\end{equation}
Moreover, 
\begin{equation} \label{convergencecp}
\ue \longrightarrow \mathbf{CP}_j \quad \text{uniformly as} \quad |x_j| \to \infty \quad \text{in} \quad \Omega_{j} \, , \ \ \forall j \in \{1,\dots,J \} \, .
\end{equation}
\end{theorem}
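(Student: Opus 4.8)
The plan is to argue outlet by outlet. Fix $j\in\{1,\dots,J\}$; by \eqref{Omegaj} we may identify $\Omega_j$ with the half-strip $\{x_j>0,\ 0<y_j<h_j\}$, and we set $\we\doteq\ue-\mathbf{CP}_j$, $\pi\doteq p-\Pi_j$. By \eqref{nscouette}, the pair $(\we,\pi)$ solves in $\Omega_j$
\[
-\Delta\we+(\we\cdot\nabla)\we+(\mathbf{CP}_j\cdot\nabla)\we+(\we\cdot\nabla)\mathbf{CP}_j+\nabla\pi=\fe\,,\qquad\nabla\cdot\we=0\,,
\]
with $\we=\0$ on the long walls $\Gamma_j^0\cup\Gamma_j^1$ and, by \eqref{fluxcp0} together with \eqref{NSE}$_3$, with vanishing flux $\int_{\{x_j=t\}}\we\cdot\ee_1^j=0$ through every cross-section. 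Solving \eqref{CPconstants} for $A_j,B_j$ in terms of $b_j^1-b_j^0$ and $\Fe_j-b_j^0h_j$, the smallness assumption \eqref{uni-1} forces $|A_j|+|B_j|\le C\delta_*$ and $\|\mathbf{CP}_j-(b_j^0,0)\|_{L^\infty(\Omega_j)}\le C\delta_*$; in other words $\mathbf{CP}_j$ is, in $W^{1,\infty}$, a small perturbation of the constant (hence gradient-free) flow $(b_j^0,0)$, even though $b_j^0$ itself need not be small. This reduces \eqref{uni-2}--\eqref{convergencecp} to controlling the Dirichlet integral of $\we$ along the outlet.

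The heart of the argument is a Saint-Venant / Leray--Hopf energy estimate for $\we$, in the spirit of \cite{ladyzhenskaya1983determination} (Section~2), \cite{amick1978properties} and \cite{horgan1978spatial}. Testing the momentum equation for $\we$ with $\we$ over a finite sub-strip $\Omega_j\cap\{s<x_j<T\}$ and integrating by parts, every boundary contribution concentrates on the cross-sections $\{x_j=s\}$ and $\{x_j=T\}$. The terms to be controlled are: the viscous flux $\int_{\{x_j=t\}}(\partial_{x_j}\we)\cdot\we$ and the cubic term $\int_{\{x_j=t\}}(w_1+(\mathbf{CP}_j)_1)|\we|^2$, both bounded by the cross-sectional trace of $\nabla\we$ — hence, via interior and boundary elliptic regularity, by the Dirichlet integral of $\we$ on a unit neighborhood of the section — using the uniform $L^\infty$-bound on $\we$ that follows from \eqref{NSE-est}; the convective term $\int(\we\cdot\nabla)\mathbf{CP}_j\cdot\we$, estimated by $C\delta_*\int|\we|^2$ and absorbed through the cross-sectional Poincar\'e inequality $\int_0^{h_j}|\we|^2\,dy_j\le C\int_0^{h_j}|\partial_{y_j}\we|^2\,dy_j$ (valid because $\we=\0$ on both walls); and, most delicately, the pressure term $\int_{\{x_j=t\}}\pi\,w_1$, which equals $\int_{\{x_j=t\}}(\pi-c)\,w_1$ for every constant $c$ because $\int_{\{x_j=t\}}w_1=0$: choosing $c$ to be the mean of $\pi$ over a unit neighborhood of the section, a Poincar\'e--Wirtinger inequality, a trace inequality, and the local bound $\|\nabla\pi\|_{L^2}\lesssim\|\nabla\we\|_{L^2}+\|\fe\|_{L^2}$ (obtained by reading the equation as a Stokes system) control it again by the local Dirichlet integral of $\we$ plus $\int|\fe|^2$. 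For $\delta_*$ small this yields an inequality of the form
\[
\int_{\Omega_j\cap\{s<x_j<T\}}|\nabla\we|^2\ \le\ C\Bigl(\|\nabla\we\|_{L^2(\Omega_j\cap\{|x_j-s|<2\})}^2+\|\nabla\we\|_{L^2(\Omega_j\cap\{|x_j-T|<2\})}^2\Bigr)+C\|\fe\|_{L^2(\Omega)}^2 .
\]

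The conclusion then follows in two steps. First, take $s=0$ and let $T\to\infty$ in the displayed estimate: the right-hand side stays bounded thanks to the uniform interior bound \eqref{NSE-est} (which controls $\|\nabla\we\|_{L^2}$ on every unit neighborhood of a section) and to $\fe\in L^2(\Omega)$, so $\int_{\Omega_j}|\nabla\we|^2<\infty$. This is the velocity part of \eqref{uni-2}; applying the local Stokes estimate on unit squares $Q_k\subset\Omega_j$ gives $\|\we\|_{W^{2,2}(Q_k)}^2+\|\nabla\pi\|_{L^2(Q_k)}^2\lesssim\|\nabla\we\|_{L^2(3Q_k)}^2+\|\fe\|_{L^2(3Q_k)}^2$ — the only genuinely quadratic term $(\we\cdot\nabla)\we$ being handled by placing $\we\in L^\infty$ (via \eqref{NSE-est}) and $\nabla\we\in L^2$, while the $\mathbf{CP}_j$-terms are linear in $\we$ — and summing over $k$ yields the full \eqref{uni-2}. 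Second, since $\int_{\Omega_j}|\nabla\we|^2<\infty$ and $\fe\in L^2(\Omega)$ we have $\|\nabla\we\|_{L^2(Q_k)}\to0$ and $\|\fe\|_{L^2(Q_k)}\to0$ as $k\to\infty$, whence $\|\we\|_{W^{2,2}(Q_k)}\to0$; the two-dimensional Sobolev embedding $W^{2,2}(Q_k)\hookrightarrow C^0(\overline{Q_k})$ then gives $\|\we\|_{L^\infty(Q_k)}\to0$, which is exactly the uniform convergence \eqref{convergencecp}. A one-sided version of the energy estimate, carried out on $\Omega_j\cap\{x_j>t\}$ once $\int_{\Omega_j}|\nabla\we|^2<\infty$ is known, even yields exponential decay of $\int_{\{x_j>t\}}|\nabla\we|^2$ when $\fe$ is supported in a bounded set.

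I expect the main obstacle to be the pressure term: bounding $\|\pi-c\|_{L^2(\{x_j=t\})}$ on cross-sections by the local Dirichlet integral of $\we$ (and of $\fe$) requires combining the zero-flux identity with up-to-the-wall regularity for the Stokes operator, and the energy identity on the unbounded strip $\Omega_j\cap\{x_j>t\}$ can be justified only after the global finiteness $\int_{\Omega_j}|\nabla\we|^2<\infty$ has been secured — which is why one first works on finite strips and passes to the limit using \eqref{NSE-est}. The lack of smallness of $b_j^0$ only enlarges the constants (and slows any decay rate), but does not affect the argument.
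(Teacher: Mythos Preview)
Your argument is correct and rests on the same Saint-Venant energy machinery as the paper, but the organisation is genuinely different. The paper works globally: it takes a flux carrier $\mathbf{U}$ equal to $\mathbf{CP}_j$ in every outlet, sets $\we=\ue-\mathbf{U}$ on all of $\Omega$, and argues by contradiction. Assuming $\int_\Omega|\nabla\we|^2=+\infty$, the energy identity over $\Omega^t$ (after averaging in $t$ to turn boundary terms into volume integrals over $\Omega^{t-1,t}$) yields the differential inequality $h(t)\le c_2\bigl(h'(t)+h'(t)^{3/2}\bigr)$, and $h(t)\to\infty$ then forces $h'(t)=\int_{\Omega^{t-1,t}}|\nabla\we|^2\to\infty$, contradicting~\eqref{NSE-est}. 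Your route instead works outlet by outlet, keeps the boundary terms as honest line integrals at $\{x_j=s\}$ and $\{x_j=T\}$, and bounds them \emph{directly} by trace inequalities plus interior Stokes regularity, invoking the uniform $W^{2,2}$ bound~\eqref{NSE-est} (and its $L^\infty$ consequence) to control the $W^{2,2}$ trace of $\nabla\we$ and to linearise the cubic term. This gives the displayed inequality with a constant $C$ that depends on $C_*$ from~\eqref{NSE-est}, and letting $T\to\infty$ yields $\int_{\Omega_j}|\nabla\we|^2<\infty$ without contradiction.

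Both proofs hinge on the same three ingredients: the smallness~\eqref{uni-1} makes $\|\nabla\mathbf{CP}_j\|_{L^\infty}$ small so that the volume term $(\we\cdot\nabla)\mathbf{CP}_j\cdot\we$ is absorbed via Poincar\'e; the zero-flux identity kills the mean of the pressure at each section; and the uniform bound~\eqref{NSE-est} tames the boundary fluxes. The paper's averaging trick keeps the differential inequality at the $W^{1,2}$ level (the $(h')^{3/2}$ term replaces your appeal to the $L^\infty$ bound), so its constants are universal and~\eqref{NSE-est} enters only at the final contradiction step; your direct estimate is shorter but bakes $C_*$ into the constant from the outset. Either way, the passage from $\int_{\Omega_j}|\nabla\we|^2<\infty$ to~\eqref{uni-2} and then to~\eqref{convergencecp} via local Stokes regularity and Sobolev embedding is the same.
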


The third main result of this article guarantees, in an analogous smallness regime, the unique solvability of problem \eqref{NSE}
within the class of solutions veryfing the estimate \eqref{NSE-est}.
	
\begin{theorem}[{\bf Uniqueness}]\label{th:main3} Let $\Omega \subset \mathbb{R}^2$ be an admissible domain.
Suppose that $\fe\in L^{2}(\Omega)$ is an external force, $\ae\in W^{3/2,2}_\loc(\partial\Omega)$ is a boundary velocity obeying
\eqref{bda}-\eqref{bda-1}, and $\Fe_1,...,\Fe_J \in \mathbb{R}$ satisfy the compatibility condition \eqref{ch5}.
There exists $\delta_{0} > 0$, depending only on $\Omega$, such that, if
\begin{equation}  \label{uni-4}
\|\fe\|_{L^2(\Omega)}+\|\ae\|_{W^{3/2,2}(\partial\Omega \cap B_{R})}+
\max\limits_{j \in \{1, \dots, J\}}\left( |\be_j^0|+|\be_j^1|+ |\Fe_j| \right)< \delta_0 \, ,
\end{equation}
then the system \eqref{NSE} admits a unique strong solution $(\ue,p)\in W^{2,2}_\loc(\Omega)\times W^{1,2}_\loc(\Omega)$ satisfying \begin{equation} 
	\sup_{z_0 \in \mathbb{R}^2}\|\ue\|_{W^{2,2}(\Omega\cap B(z_0))}< \infty\,.
\end{equation}
\end{theorem}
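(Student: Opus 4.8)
The plan is to prove uniqueness by the standard energy method applied to the difference of two solutions, but the key difficulty is that the energy identity is a priori delicate on the unbounded domain $\Omega$, so I must first establish enough decay. Suppose $(\ue_1, p_1)$ and $(\ue_2, p_2)$ are two strong solutions satisfying the stated supremum bound. Under the smallness assumption \eqref{uni-4}, the data is small enough that Theorem \ref{th:main2} applies: in particular, $\be_j^0 = \be_j^1 = \0$ and $\Fe_j$ small forces $\mathbf{CP}_j \equiv \0$, so each $\ue_i \to \0$ uniformly at infinity in every outlet, and moreover $\|\ue_i - \0\|_{W^{2,2}(\Omega_j)} < \infty$, i.e.\ $\ue_i \in W^{2,2}(\Omega)$ globally (not just locally). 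This global integrability is what makes the energy argument rigorous. First I would record, from Theorem \ref{th:main} together with the proof of the Leray--Hopf-type inequality referenced in the introduction, the quantitative bound: there is a constant $C(\Omega)$ such that each solution satisfies $\|\ue_i\|_{W^{2,2}(\Omega)} + \|\nabla p_i\|_{L^2(\Omega)} \le C(\Omega)\, \delta_0$ when \eqref{uni-4} holds with parameter $\delta_0$. (This follows by revisiting the existence estimates with explicit dependence on the data; it is the analog of the a priori bound in \cite[Section XIII.2]{galdi2011introduction} and \cite[Section 2]{ladyzhenskaya1983determination}.)

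Next I would set $\we \doteq \ue_1 - \ue_2$ and $\pi \doteq p_1 - p_2$. Subtracting the two copies of \eqref{NSE} gives
\begin{equation*}
-\Delta \we + (\ue_1 \cdot \nabla)\we + (\we \cdot \nabla)\ue_2 + \nabla \pi = \0\,, \qquad \nabla \cdot \we = 0 \quad \text{in } \Omega\,,
\end{equation*}
with $\we = \0$ on $\partial\Omega$ and $\int_{\sigma_j^0} \we \cdot \ee_1^j = 0$ for every $j$. Because $\we \in W^{2,2}(\Omega) \cap W_0^{1,2}(\Omega)$, decays uniformly in the outlets, and has vanishing flux through every cross-section, $\we$ is an admissible test function: I would multiply the difference equation by $\we$ and integrate over $\Omega_0 \cup \bigcup_j (\Omega_j \cap \{x_j < T\})$, then send $T \to \infty$. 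The boundary terms at $x_j = T$ (both the pressure term $\int \pi\, \we \cdot \ee_1^j$ and the convective term $\int (\ue_1 \cdot \ee_1^j)\,|\we|^2$ and the viscous term) vanish in the limit thanks to the uniform decay of $\we$ and $\nabla \we$ in the outlets combined with global $L^2$ integrability, which forces $\int_{\Omega_j \cap \{x_j > T\}} (|\we|^2 + |\nabla \we|^2) \to 0$ and a trace argument on unit slabs. The pressure term $\int_\Omega \nabla \pi \cdot \we$ vanishes by $\nabla \cdot \we = 0$ and the vanishing of $\we$ on $\partial\Omega$ together with the decay. The term $\int_\Omega (\ue_1 \cdot \nabla)\we \cdot \we = \tfrac12 \int_\Omega \ue_1 \cdot \nabla |\we|^2 = 0$ for the same reasons. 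What remains is
\begin{equation*}
\int_\Omega |\nabla \we|^2 = -\int_\Omega (\we \cdot \nabla)\ue_2 \cdot \we \le \|\nabla \ue_2\|_{L^2(\Omega)}\, \|\we\|_{L^4(\Omega)}^2 \le C_{\mathrm{Sob}}\, \|\nabla \ue_2\|_{L^2(\Omega)}\, \|\nabla \we\|_{L^2(\Omega)}^2\,,
\end{equation*}
using the Ladyzhenskaya--Gagliardo--Nirenberg inequality $\|\we\|_{L^4(\Omega)}^2 \le C_{\mathrm{Sob}} \|\we\|_{L^2(\Omega)}\|\nabla\we\|_{L^2(\Omega)} \le C_{\mathrm{Sob}}' \|\nabla \we\|_{L^2(\Omega)}^2$ valid on $\Omega$ for functions in $W_0^{1,2}(\Omega)$ (here using that $\Omega$ is a union of strip-like channels, so the Poincaré inequality holds). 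Combining with the a priori bound $\|\nabla \ue_2\|_{L^2(\Omega)} \le C(\Omega)\delta_0$, I choose $\delta_0$ small enough that $C_{\mathrm{Sob}}'\, C(\Omega)\, \delta_0 < 1$, which forces $\nabla \we \equiv \0$, hence $\we \equiv \0$ (again by Poincaré), and then $\nabla \pi \equiv \0$ from the equation, giving $p_1 = p_2$ up to an additive constant.

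The main obstacle I anticipate is rigorously justifying the vanishing of all boundary terms in the limit $T \to \infty$ — this is where the $W^{2,2}_\loc$ regularity assumption and the uniform decay from Theorem \ref{th:main2} are both essential, and care is needed because the pressure difference $\pi$ itself is only controlled through $\nabla \pi \in L^2$, so one must fix the additive constant of $\pi$ appropriately in each outlet (say, subtracting its mean over a slab) before estimating the flux-type boundary integral, and then use that $\int_{\sigma_j^0} \we \cdot \ee_1^j = 0$ to absorb the constant. A clean way to organize this is to first prove the auxiliary decay statement that $\int_{\Omega_j \cap \{T < x_j < T+1\}} (|\we|^2 + |\nabla \we|^2 + |\nabla^2 \we|^2) \to 0$ as $T \to \infty$, which is immediate from $\we \in W^{2,2}(\Omega_j)$, and then along a suitable sequence $T_n \to \infty$ the trace integrals on $\{x_j = T_n\}$ tend to zero. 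The remaining steps — the Sobolev/Poincaré inequalities on the channel-type domain $\Omega$ and the quantitative a priori bound in terms of $\delta_0$ — are routine given the machinery already developed in the paper, so I would state them as lemmas and refer to the existence proof for the constants.
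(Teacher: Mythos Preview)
Your overall strategy --- invoke Theorem~\ref{th:main2} to get $\we = \ue_1 - \ue_2 \in W^{1,2}(\Omega)$ globally, then run a global energy argument --- is different from the paper's route and can in principle be made to work, but as written there is a genuine gap.

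The concrete error is your key estimate
\[
\int_\Omega |\nabla\we|^2 \le \|\nabla \ue_2\|_{L^2(\Omega)}\,\|\we\|_{L^4(\Omega)}^2.
\]
The factor $\|\nabla \ue_2\|_{L^2(\Omega)}$ is infinite. Smallness of the data does \emph{not} force $\mathbf{CP}_j\equiv\0$ (you wrote ``$\be_j^0=\be_j^1=\0$'', but the hypothesis is that these are small, not zero); for generic nonzero data $\nabla\mathbf{CP}_j$ is a nonzero function of $y_j$ alone and hence not square-integrable over the infinite strip $\Omega_j$. Theorem~\ref{th:main2} only yields $\ue_i-\mathbf{CP}_j\in W^{2,2}(\Omega_j)$, not $\ue_i$ itself. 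Relatedly, your asserted a~priori bound $\|\ue_i\|_{W^{2,2}(\Omega)}\le C(\Omega)\,\delta_0$ for \emph{every} solution is circular: the existence estimates control only the constructed solution, and you cannot assume quantitative smallness of an arbitrary second solution without essentially assuming uniqueness.

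The natural repair is to integrate by parts once more and bound $\bigl|\int_\Omega(\we\cdot\nabla)\we\cdot\ue\bigr|$ via $\|\ue\|_{L^\infty}$ instead --- but then one needs $\|\ue\|_{L^\infty}$ small, which is again not available for an arbitrary solution. The paper handles this asymmetrically: Proposition~\ref{th:main4} first constructs one specific solution $\ue$ with $\sup_\Omega|\ue|<\e_0$; Proposition~\ref{th:main5} then compares any other solution $\tilde\ue$ to this small one. Crucially, the paper does not use Theorem~\ref{th:main2} or global integrability of $\we$ at all. From the local energy identity on $\Omega^t$ (using only $\|\ue\|_{L^\infty}<\e_0$) it derives the differential inequality $h(t)\le c_*\bigl(h'(t)+h'(t)^{3/2}\bigr)$ for $h(t)=\int_{t-1}^t D(\tau)\,d\tau$, which forces $D(t)\gtrsim t^3$ whenever $\we\not\equiv\0$; this contradicts the linear bound $D(t)\lesssim t$ that follows directly from the uniform local $W^{2,2}$ hypothesis on $\tilde\ue$.
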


\begin{remark}
{\rm On the one hand, \eqref{uni-4} imposes a smallness condition on all the data of the problem: external force, boundary velocity and fluxes. On the other hand, \eqref{uni-1} represents a compatibility condition as it dictates, 
roughly speaking, that {\it the gradients} of the corresponding  
Couette-Poiseuille flows $\mathbf{CP}_j$ from~(\ref{fluxcp0})--(\ref{nscouette}) are 
 uniformly small.}
\end{remark}

\section{Proof of Theorem \ref{th:main}}

\subsection{The proof at a glance} \label{glance}
Among other issues, the proofs of Theorems \ref{th:main}--\ref{th:main2}--\ref{th:main3} are obtained through the following steps.\par
- In Section \ref{sec:2.1} we build a special \textit{flux carrier} of the boundary data in \eqref{NSE}$_2$, that is, a solenoidal extension of the boundary
velocity which, moreover, verifies the flux condition \eqref{NSE}$_3$. Such flux carrier is the solution of a suitable Stokes problem in $\Omega_{0}$, and satisfies a \textit{uniform} Leray-Hopf inequality in every outlet. This partition of $\Omega$ is a necessary step, in view of the fact that in multiply connected domains one cannot construct
Hopf extensions for general boundary data, see \cite{heywood2011impossibility,takeshita1993remark}. \par
- In Section \ref{proofs-sec} we adapt to our setting the celebrated method of \textit{invading domains};   under the~sole zero total flux condition we are able to find a sequence of solutions
$\{(\ue_k,p_k) \}_{k \in \mathbb{N}}$ to the Navier-Stokes system in any truncated domain because of Koroblov--Pileckas--Russo general existence theorem, see \cite{korobkov2015solution,korobkov2024steady}. \par
- The ultimate step in the proof of Theorem \ref{th:main} is to show that, as $k\to+\infty$ and up the extraction of a subsequence,
the sequence $\{(\ue_k,p_k) \}_{k \in \mathbb{N}}$ converges locally (in a sense to be made precise later, see Section \ref{proofs-sec}) to a solution $(\ue,p)$ of the
original problem \eqref{NSE} which, moreover, satisfies the required estimate \eqref{NSE-est}. \par
- To prove the required estimate, the main tool in the~Ladyzhenskaya--Solonnikov paper \cite{ladyzhenskaya1983determination}  
 was the classical Leray--Hopf inequality (see, e.g., (\ref{gg10})\,) obtained by the standard Hopf cut-off procedure under homogeneous boundary conditions. In our case, under general boundary data assumptions, this cut-off procedure does not work anymore, and one cannot expect the Leray--Hopf inequality in the~general case, see \cite{heywood2011impossibility,takeshita1993remark}. Nevertheless, fortunately, the Leray--Hopf inequality still can be proved {\it for large $k$} based on a subtle Leray's  \textit{reductio ad absurdum} argument (traced back to the seminal work of Leray~\cite{leray1933etude}). The cornerstone here is the triviality of the limiting Euler solution~(see Appendix~\ref{poet2}\,) proved using the Korobkov--Pileckas--Russo approach from~\cite{kpr,korobkov2015solution,korobkov2024steady}. In turn, subtle methods of real analysis (Morse-Sard type theorems and fine properties of the level sets of Sobolev functions) developed by Bourgain et al.~\cite{BKK13} are very useful here as well.  Compared with the setting of \cite{korobkov2015solution}, the new challenge here is due to the unboundedness of the domain $\Omega$ and the linear growth of total Dirichlet energies of the invading domain solutions. 
 \par
- In Sections \ref{poet4}--\ref{poet6}--\ref{poet7} we bound the growth of the Dirichlet integral of the solutions $\{(\ue_k,p_k) \}_{k \in \mathbb{N}}$ on the sequence of invading domains:
this is done in three steps, first the global linear growth, then the uniform linear growth, finally through uniform local estimates. Here, at every step, we use 
some elegant real-analysis techniques from~\cite{ladyzhenskaya1983determination}  (see Sections~\ref{poet6}--\ref{poet7}).  We are
so able to prove the uniform bound \eqref{NSE-est} in Section \ref{conclusionproof}.\par
- Finally the uniqueness and asymptotic behavior for small data are obtained in the last Sections~\ref{poet8}--\ref{poet8.2} again using estimates from~\cite{ladyzhenskaya1983determination}.

\subsection{Construction of a flux carrier} \label{sec:2.1}
Let $\Omega \subset \mathbb{R}^2$ be an admissible domain in the sense of Definition \ref{admissibledom}.
In agreement with \eqref{Omegaj},
for all $j\in\{1,\dots,J\}$ and $\ell> 0$, we set
\begin{equation}\label{bdd00flux}
\sigma_j^\ell\doteq\{z\in\Omega_j\, |\ x_j=\ell\}\, .
\end{equation}
Given a solution $(\mathbf{u}, p)$ to \eqref{NSE},
the Divergence Theorem, together with assumptions \eqref{bda}-\eqref{bda-1} and the incompressibility condition \eqref{NSE}$_1$, implies that
$$
\int_{\sigma_j^\ell}  \mathbf{u} \cdot \ee_1^j = \Fe_j \qquad \forall j \in \{1,\dots,J \} \ \ \forall \ell>0 \, .
$$
Given two extended real numbers $0 \leq a < b \leq \infty$, we define, as in \eqref{bdd00flux}, the following sets:
\begin{equation} \label{Omega_ab}
	\Omega^{a,b}_{j} \doteq \{ z \in\Omega_j\, | \ a<x_j< b\} \, , \qquad
	\Omega^{a,b} \doteq  \bigcup\limits_{j=1}^{J} \Omega_j^{a,b},\qquad\OO^{a} \doteq \Omega_0 \cup \Omega^{0,a} \, .
\end{equation}
In Figure \ref{fig:5} we depict an example of the truncated domain $\Omega^2$.
\begin{figure}[H]
\begin{center}
		\includegraphics[scale=0.75]{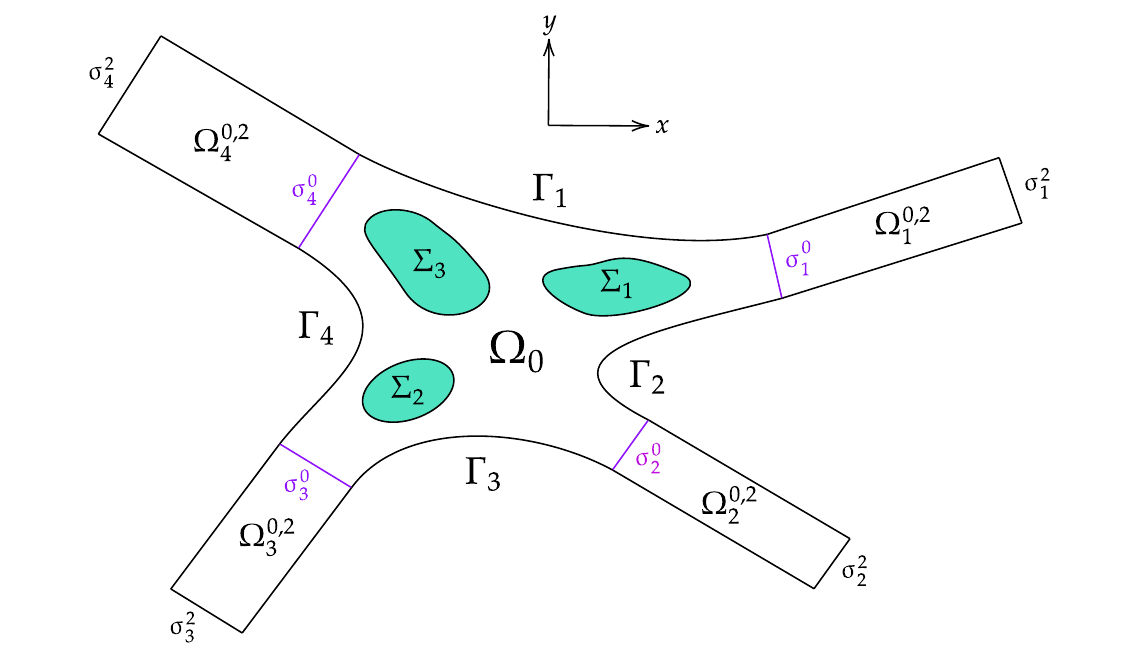}
	\end{center}
	\vspace{-5mm}
	\caption{The truncated domain $\Omega^2$, defined in~(\ref{Omega_ab}), with $I=3$, \ $J=4$ and $\Gamma_5 = \Gamma_1$.}
\label{fig:5}
\end{figure}

In this section we state the existence of a solenoidal vector field in $\Omega$ which, all together, satisfies the inhomogeneous boundary conditions \eqref{NSE}$_2$ on
$\partial \Omega$, the flux constraints \eqref{NSE}$_3$ and a \textit{uniform} Leray-Hopf inequality (see, e.g., \cite{galdi1991existence}\,) in {\it each bounded part of every outlet}.
More precisely, we prove:

\begin{theorem}\label{fluxcarrier}
Let $\Omega \subset \mathbb{R}^2$ be an admissible domain. Suppose that $\ae\in W^{3/2,2}_\loc(\partial\Omega)$ is a boundary velocity obeying
\eqref{bda}-\eqref{bda-1}, and $\Fe_1,...,\Fe_J \in \mathbb{R}$ satisfy the compatibility condition \eqref{ch5}. Then, for any $\e \in (0,1)$,
there exists a vector field $\mathbf{U} \in W^{2,2}_{\loc}(\overline\Omega)$ such that
\begin{equation} \label{vecpsi}
 \nabla \cdot \mathbf{U} =0 \ \ \mbox{ in } \ \ \Omega \, ; \qquad \mathbf{U} = \mathbf{a} \ \ \mbox{ on } \ \ \partial \Omega \, ; \qquad \int_{\sigma_j^0} \mathbf{U} \cdot \ee_1^j = \Fe_j \qquad \forall j \in \{1,\dots,J \} \, .
\end{equation}
Moreover, we have the local uniform bound
\begin{equation}  \label{uboundz2}
	\| \mathbf{U} \|_{W^{2,2}(\Omega\cap B(z_0))}  \le C_* \qquad \forall z_0\in\R^2\, ,
\end{equation}
where $C_* > 0$ is a constant depending on $\Omega$, $\ae$, $\{\mathcal{F}_j\}^{J}_{j=1}$ and $\varepsilon$, but is independent of $z_0 \in\R^2$.
Finally, given $j \in \{1,\dots, J\}$, $2\le a<b\le\infty$, and any vector field $\bbe \in W^{1,2}\bigl( \Omega_j^{a,b}\bigr)$ such that
  \begin{equation}\label{testeta}
\nabla \cdot \bbe =0 \ \ \mbox{ in } \ \ \Omega_j^{a,b} \qquad \text{and} \qquad  \bbe = 0 \ \ \mbox{ on } \ \ \Gamma \cap \partial  \Omega_j^{a,b} \, ,
\end{equation}
the Leray-Hopf inequality holds
\begin{equation}\label{HII}
\left| \int_{  \Omega_j^{a,b}} (\bbe\cdot \nabla)\mathbf{U} \cdot \bbe \right| +
\left| \int_{ \Omega_j^{a,b}} (\bbe\cdot \nabla)\bbe\cdot\mathbf{U} \right| \leq \varepsilon \,
\| \nabla\bbe \|_{L^{2}(  \Omega_j^{a,b})}^2 \, .
\end{equation}
\end{theorem}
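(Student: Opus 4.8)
The strategy is to build $\mathbf U$ by pasting together two ingredients: an explicit "channel part" that already equals the boundary data and has the right flux in each outlet, and a correction supported in the bounded region $\Omega_0$ (more precisely in $\Omega^{0,2}$) that fixes up the mismatch on $\partial\Omega_0$ while remaining solenoidal. Concretely, in each outlet $\Omega_j$ I would take the Couette--Poiseuille profile $\mathbf{CP}_j$ from \eqref{CP}--\eqref{CPconstants}: by \eqref{fluxcp0} it carries flux $\Fe_j$, by construction it equals $\be_j^0$ on $\Gamma_j^0$ and $\be_j^1$ on $\Gamma_j^1$, and it is $x_j$-independent hence solenoidal. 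Multiplying $\mathbf{CP}_j$ by a smooth cutoff $\chi_j(x_j)$ that is $1$ for $x_j\ge 2$ and $0$ for $x_j\le 1$ breaks the divergence-free condition only in the collar $\{1<x_j<2\}$, and there the flux of $\chi_j\mathbf{CP}_j$ through any cross-section varies between $0$ and $\Fe_j$; this controlled defect, together with the mismatch of the boundary data on $\partial\Omega_0\setminus(\Gamma^0\cup\Gamma^1)$, is exactly what must be absorbed by a Stokes/Bogovskii-type corrector $\mathbf w$ in the bounded domain $\Omega^{0,2}$. The compatibility condition \eqref{ch5} (rewritten via \eqref{fluxa}) guarantees that the total flux of the prescribed boundary trace of $\mathbf w$ on $\partial\Omega^{0,2}$ is zero, so such a corrector exists; I would obtain it as the solution of a Stokes problem in $\Omega^{0,2}$ with the appropriate inhomogeneous data, giving $\mathbf w\in W^{2,2}(\Omega^{0,2})$. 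Setting $\mathbf U\doteq \sum_j\chi_j\mathbf{CP}_j + \mathbf w$ (with $\mathbf w$ extended by $\mathbf 0$) yields \eqref{vecpsi}, and the local bound \eqref{uboundz2} is immediate since $\mathbf U$ coincides with the smooth, bounded, $x_j$-independent field $\mathbf{CP}_j$ for $x_j\ge 2$ and lies in $W^{2,2}$ on the fixed compact piece $\Omega^{0,2}$.

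The heart of the matter, and the only genuinely delicate point, is the uniform Leray--Hopf inequality \eqref{HII} in each bounded sub-channel $\Omega_j^{a,b}$ with $2\le a<b\le\infty$. For $x_j\ge 2$ we have $\mathbf U=\mathbf{CP}_j=(A_jy_j^2+B_jy_j+b_j^0,\,0)$, so on $\Omega_j^{a,b}$ both integrands in \eqref{HII} involve only $\mathbf U$ through its values and first derivatives \emph{in the $y_j$-variable}, and these are bounded by a constant $M_j$ depending only on $h_j,\be_j^0,\be_j^1,\Fe_j$ (via $A_j,B_j$). Since $\bbe$ vanishes on $\Gamma\cap\partial\Omega_j^{a,b}$, i.e.\ on the two long sides $y_j=0$ and $y_j=h_j$, a one-dimensional Poincar\'e inequality in the $y_j$-direction gives $\|\bbe\|_{L^2(\Omega_j^{a,b})}\le \tfrac{h_j}{\pi}\|\partial_{y_j}\bbe\|_{L^2(\Omega_j^{a,b})}\le C h_j\|\nabla\bbe\|_{L^2(\Omega_j^{a,b})}$; combined with Cauchy--Schwarz this bounds the first integral by $C\,M_j\,h_j^2\,\|\nabla\bbe\|_{L^2}^2$ and, after one more Poincar\'e application, likewise for the second. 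This is \emph{not yet} $\le\varepsilon\|\nabla\bbe\|^2$, so the decisive step is to make the width of the effective channel small: I would \emph{not} work with the geometrically fixed $h_j$, but instead choose a larger threshold $T=T(\varepsilon)\ge 2$ and redefine the cutoff $\chi_j$ to pass from $0$ to $1$ on $\{T<x_j<T+1\}$ — wait, that does not shrink $h_j$. The correct device is to use a refined Hardy/Poincar\'e estimate adapted to a \emph{thin slab}: for each fixed $\varepsilon$, note that $\mathbf{CP}_j$ near the wall $y_j=0$ (resp.\ $y_j=h_j$) satisfies $|\mathbf{CP}_j(x_j,y_j)-\be_j^0|\le C\,|A_j|\,y_j$ (resp.\ $\le C y_j'$ with $y_j'=h_j-y_j$) — but since $\bbe$, not $\mathbf U$, vanishes at the walls, what one really needs is the classical Hopf-type bound $\left|\int(\bbe\cdot\nabla)\mathbf U\cdot\bbe\right|\le \|\,\mathbf U/\rho\,\|_{L^\infty}\,\|\rho\,\nabla\bbe\|_{L^2}\,\|\bbe/\rho\|_{L^2}$ — no; in our setting $\mathbf U$ is not small. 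The clean route, and the one I would actually carry out, is: absorb the inhomogeneity into $\Omega^{0,2}$ as above, and then in $\Omega_j^{2,\infty}$ replace $\mathbf{CP}_j$ by a field of the same flux that is supported in a boundary layer of width $\eta=\eta(\varepsilon)$; that is, choose in each outlet a divergence-free field $\mathbf U$ of the form $\mathbf U=\nabla^\perp\Psi_j$ with a stream function $\Psi_j(y_j)$ that is affine outside $[0,\eta]\cup[h_j-\eta,h_j]$, interpolating the constants dictated by $\be_j^0,\be_j^1$ and by the flux $\Fe_j$ — this is possible precisely because $\be_j^0,\be_j^1$ are tangential (\eqref{slipbcori}) so the wall values of the stream function are the needed constants, and the total drop across the channel equals $\Fe_j$. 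Then $\nabla\mathbf U$ is supported in the two boundary strips of width $\eta$, where $\bbe$ is small by Poincar\'e: $\|\bbe\|_{L^2(\text{strip})}\le C\eta\|\nabla\bbe\|_{L^2(\text{strip})}$, giving $\left|\int(\bbe\cdot\nabla)\mathbf U\cdot\bbe\right|\le \|\nabla\mathbf U\|_{L^\infty}\|\bbe\|_{L^2(\mathrm{supp}\,\nabla\mathbf U)}^2\le C(h_j,\be_j^m,\Fe_j)\,\eta\,\|\nabla\bbe\|_{L^2}^2$; choosing $\eta$ small (depending on $\varepsilon$ and the data) makes this $<\tfrac\varepsilon2\|\nabla\bbe\|_{L^2}^2$. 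The second term $\int(\bbe\cdot\nabla)\bbe\cdot\mathbf U$ is handled by integration by parts: since $\bbe$ is solenoidal and vanishes on $\Gamma\cap\partial\Omega_j^{a,b}$, $\int(\bbe\cdot\nabla)\bbe\cdot\mathbf U=-\int(\bbe\cdot\nabla)\mathbf U\cdot\bbe + (\text{boundary terms on }\sigma_j^a\cup\sigma_j^b)$, and on the cross-sections $\sigma_j^a,\sigma_j^b$ one again uses the $\eta$-localization of $\mathbf U$'s variation together with the trace/Poincar\'e inequality; alternatively one bounds it directly as $\|\mathbf U\|_{L^\infty}$-free by writing $\mathbf U=\mathbf U-\be_j^0$ plus a constant, the constant part dropping out by $\nabla\cdot\bbe=0$ and the tangency of $\bbe$-irrelevant wait — the constant $\be_j^0$ part gives $\int(\bbe\cdot\nabla)\bbe\cdot\be_j^0=\int\nabla\cdot(\bbe\,(\bbe\cdot\be_j^0))=\int_{\partial}(\bbe\cdot\n)(\bbe\cdot\be_j^0)$ which vanishes on the walls and on $\sigma^a,\sigma^b$ only if $\bbe\cdot\n$ there is controlled — it need not vanish, so one keeps it and estimates by the trace theorem, and the remaining piece $\mathbf U-\be_j^0$ is $O(\eta)$ in the relevant strips. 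Taking $\eta$ small enough that the sum of all these contributions is $<\varepsilon\|\nabla\bbe\|_{L^2(\Omega_j^{a,b})}^2$ completes \eqref{HII}.

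In summary, the construction is $\mathbf U=\sum_{j=1}^J\chi_j\,\mathbf{CP}_j^{(\eta)} + \mathbf w$, where $\mathbf{CP}_j^{(\eta)}=\nabla^\perp\Psi_j^{(\eta)}$ is the thin-boundary-layer stream-function profile carrying flux $\Fe_j$ and matching $\be_j^0,\be_j^1$ at the walls, $\chi_j$ is a cutoff localizing the transition to $\{1<x_j<2\}$, and $\mathbf w\in W^{2,2}(\Omega^{0,2})$ solves a Stokes problem in the fixed bounded domain $\Omega^{0,2}$ absorbing the divergence defect of $\chi_j\mathbf{CP}_j^{(\eta)}$ and the boundary mismatch on $\partial\Omega_0$, which is solvable thanks to \eqref{ch5}--\eqref{fluxa}; elliptic regularity for the Stokes system gives the $W^{2,2}$ bound and hence \eqref{uboundz2}. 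The single real obstacle is \eqref{HII}: the naive estimate loses a factor depending on the fixed channel width $h_j$, and the point of the $\eta$-boundary-layer redefinition of the outlet profile — legitimate only because $\be_j^0,\be_j^1$ are tangential (\eqref{bda-1}, \eqref{slipbcori}), so their values are admissible boundary values of a stream function — is precisely to concentrate $\nabla\mathbf U$ where the test field $\bbe$ is forced to be small, turning the Poincar\'e constant in $\eta$ into the prefactor $\varepsilon$. Everything else (divergence-free pasting, the Stokes corrector, elliptic regularity, the Poincar\'e and trace inequalities in the strips $\Omega_j^{a,b}$) is standard and uniform in $a,b$ because the geometry of each outlet is translation-invariant in $x_j$.
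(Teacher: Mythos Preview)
Your overall architecture --- a solenoidal profile in each outlet that already carries the flux and matches the tangential wall data, glued via a Stokes corrector in the bounded truncation $\Omega^{0,2}$ --- is exactly what the paper does. The gap is in your treatment of the second term in \eqref{HII}. With a stream function that is merely \emph{affine} in the bulk $[\eta,h_j-\eta]$, the field $\mathbf U$ equals a nonzero constant $(c,0)$ there (roughly $c\sim\Fe_j/h_j$). After integrating $\int(\bbe\cdot\nabla)\bbe\cdot\mathbf U$ by parts, the cross-section terms on $\sigma_j^a\cup\sigma_j^b$ reduce (using $\mathbf U\cdot\mathbf e_2^j=0$ and $\nabla\cdot\mathbf U=0$) to the volume integral $2\int_{\Omega_j^{a,b}}\eta_1^j\,\nabla\eta_1^j\cdot\mathbf U$; over the bulk this is $2c\int\eta_1^j\,\partial_{x_j}\eta_1^j$, which is controlled only by $|c|\,\|\bbe\|_{L^2}\|\nabla\bbe\|_{L^2}\lesssim |c|\,h_j\|\nabla\bbe\|_{L^2}^2$ --- a constant independent of $\eta$. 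Your fallback (subtract $\be_j^0$ and estimate the rest by the trace theorem) fails for the same reason: $\mathbf U-\be_j^0$ equals $c-b_j^0$ in the bulk, not $O(\eta)$. And if you instead force $c=0$ (constant stream function in the bulk), the entire flux $\Fe_j$ is pushed into the strips, so $\|\mathbf U\|_{L^\infty(\mathrm{strip})}\sim\Fe_j/\eta$ and $\|\nabla\mathbf U\|_{L^\infty}\sim\Fe_j/\eta^2$; the Poincar\'e gain $\|\bbe\|_{L^2(\mathrm{strip})}^2\le C\eta^2\|\nabla\bbe\|^2$ then only yields $O(1)$ for the first term.

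The paper's fix is to take $\mathbf V_j^\varepsilon\doteq-\nabla^\perp(\psi_j^\varepsilon\Psi_j)$, where $\Psi_j$ is the Couette--Poiseuille stream function and $\psi_j^\varepsilon$ is a Hopf cutoff in $y_j$ that equals $1$ on the walls, vanishes outside a strip of width $\sim e^{-1/\varepsilon}$, and crucially satisfies $|\nabla\psi_j^\varepsilon|\le\varepsilon/\delta_j$ with $\delta_j=\mathrm{dist}(\,\cdot\,,\Gamma_j^0\cup\Gamma_j^1)$. Then $\mathbf V_j^\varepsilon\equiv 0$ in the bulk (so the divergence-theorem trick above produces a volume integral supported only in the thin strips), and one has the pointwise bound $|\mathbf V_j^\varepsilon|\le|\mathbf{CP}_j|+\varepsilon|\Psi_j|/\delta_j$. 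The first piece is handled by Poincar\'e in the thin strip; the second piece --- which carries the large flux --- is handled not by an $L^\infty$ bound (it blows up) but by the Hardy inequality $\|\bbe/\delta_j\|_{L^2(\Omega_j^{a,b})}\le C\|\nabla\bbe\|_{L^2(\Omega_j^{a,b})}$, valid because $\bbe$ vanishes on both walls. The factor $\varepsilon$ in front of $|\Psi_j|/\delta_j$ comes from the cutoff's gradient bound, not from the strip width, and is what delivers the smallness in \eqref{HII}. In short: replace your ad hoc thin-layer profile by the Hopf-cutoff construction and invoke Hardy; then both terms of \eqref{HII}, including the cross-section contributions, are $\le C\varepsilon\|\nabla\bbe\|_{L^2}^2$ uniformly in $a,b$.
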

It is important that the estimate \eqref{HII} is independent of the length of the domains $\Omega_j^{a,b}$. The proof of Theorem~\ref{fluxcarrier} is based on the standard Hopf cut-off procedure at any outlet (in a translation invariant setting, i.e., $\mathbf{U}$ depends on the vertical variable~$y_j$ in any outlet) as well as a~gluing argument, and we postpone it until~Appendix~\ref{appendix}. For our purposes it is enough to take $\e=\frac18$, that is, we will use an extension~$\mathbf{U}$ such that 
\begin{equation}\label{HI}
\left| \int_{  \Omega_j^{a,b}} (\bbe\cdot \nabla)\mathbf{U} \cdot \bbe \right| +
\left| \int_{ \Omega_j^{a,b}} (\bbe\cdot \nabla)\bbe\cdot\mathbf{U} \right| \leq \frac18 \,
\| \nabla\bbe \|_{L^{2}(  \Omega_j^{a,b})}^2 
\end{equation}
holds for any vector field $\bbe \in W^{1,2}\bigl( \Omega_j^{a,b}\bigr)$ satisfying~(\ref{testeta}).

\subsection{Leray's method of \textit{invading domains}}\label{proofs-sec}

Let $\Omega \subset \mathbb{R}^2$ be an admissible domain. Suppose that $\fe\in L_\loc^{2}(\overline\Omega)$ is an external force verifying~\eqref{uef}, $\ae\in W^{3/2,2}_\loc(\partial\Omega)$ is a boundary velocity obeying \eqref{bda}-\eqref{bda-1}, and
$\Fe_1,...,\Fe_J \in \mathbb{R}$ satisfy the compatibility condition \eqref{ch5}. Let
$\mathbf{U}\in W^{2,2}_{\loc}(\overline\Omega)$ be a~flux carrier arising from Theorem \ref{fluxcarrier} and satisfying~(\ref{HI}).
Define the vector field $\mathbf{g} \in L_\loc^2(\overline\Omega)$ by
$$
\mathbf{g} \doteq \mathbf{f} +   \Delta \mathbf{U} - (\mathbf{U} \cdot \nabla) \mathbf{U} \, .
$$
The strong solution $(\mathbf{u}, p) \in W_{\text{loc}}^{2,2}(\Omega) \times W_{\text{loc}}^{1,2}(\Omega)$ to \eqref{NSE} will be sought in the
form
\begin{equation}  \label{shift}
\mathbf{u} = \mathbf{U} + \mathbf{w} \, ,
\end{equation}
where the vector field $\mathbf{w} \in W_{\text{loc}}^{2,2}(\Omega)$ satisfies, in strong form,
the following system:
\begin{equation}  \label{NSE-w}
	\left\{
	\begin{aligned}
		&- \Delta \mathbf{w} + (\mathbf{w} \cdot \nabla) \mathbf{w} + (\mathbf{U} \cdot \nabla) \mathbf{w} +(\mathbf{w} \cdot \nabla) \mathbf{U} + \nabla p = \mathbf{g} \, , \quad \nabla \cdot \mathbf{w} = 0 \ \ \mbox{ in } \ \ \Omega \, , \\[4pt]
		&\mathbf{w} = \0 \ \ \mbox{ on } \ \ \partial \Omega \, , \\[4pt]
		&  	\int_{\sigma_j^0} \mathbf{w} \cdot \ee_1^j = 0 \qquad \forall j \in \{1,\dots,J \} \, .
	\end{aligned}
	\right.
\end{equation}
In order to find a strong solution $(\mathbf{w}, p) \in W_{\text{loc}}^{2,2}(\Omega) \times W_{\text{loc}}^{1,2}(\Omega)$ to \eqref{NSE-w}, consider a strictly increasing sequence of bounded domains $\{  \mathcal{B}_k \}_{k \in \mathbb{N}}$ (called \textit{invading domains} in the sequel), each one having a boundary of class $\mathcal{C}^{2}$, such that
\begin{equation}  \label{chp1}
	\Omega^2 \subset \mathcal{B}_0 \qquad \text{and} \qquad \bigcup\limits_{k\in \N}  \mathcal{B}_k=\Omega \, ,
\end{equation}
where $\Omega^2$ is as in~(\ref{Omega_ab}). Moreover, we require that each
$$
S_{j,k} \doteq \Omega_j \cap \partial  \mathcal{B}_k \qquad \forall k \in \mathbb{N} \, , \ \forall j \in \{1,\dots,J \} \, ,
$$
is a $\mathcal{C}^{2}$-arc obtained from the initial arc $S_{j,1}$ by translation along the
$x_j$-axis (see  Figure \ref{fig:2} for a depiction). The precise shapes of the arcs $S_{j,1}$ are arbitrary as long as they are
tangent to the corresponding walls $\Gamma_j^0 \cup \Gamma_j^1$, for $\{1,\dots,J \}$, and have zero curvature at their endpoints in such a way that $\partial \mathcal{B}_k\in \mathcal{C}^2$.
\begin{figure}[H]
	\begin{center}
		\includegraphics[scale=0.75]{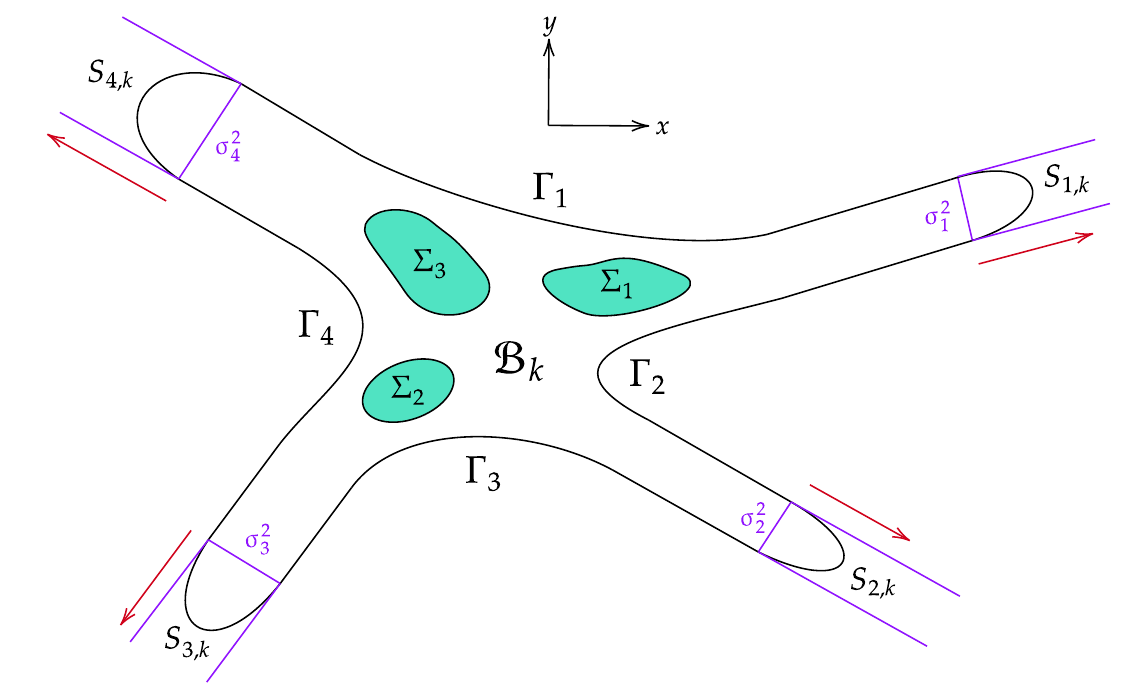}
	\end{center}
	\vspace{-5mm}
	\caption{The \textit{invading domains} procedure.}
	\label{fig:2}
\end{figure}
\noindent
By Theorem~\ref{fluxcarrier}, for any $k\in \mathbb{N}$ we have
$\mathbf{U} \in W^{2,2}(\mathcal{B}_k)$. From \eqref{vecpsi} and the Divergence Theorem we deduce
$$
\int_{S_{j,k}} \mathbf{U} \cdot \mathbf{n} = \Fe_j \qquad \forall j \in \{1,\dots,J \} \, ,
$$
thereby implying that
$$
\int_{\partial  \mathcal{B}_k} \mathbf{U} \cdot \mathbf{n} = \sum_{i=1}^{I}  \int_{\partial\Sigma_{i}} \ae\cdot\n+ \sum_{j=1}^{J} \int_{\partial\Omega_{0} \cap \Gamma_{j}} \ae\cdot\n + \sum_{j=1}^{J} \Fe_j = 0 \, ,
$$
owing to \eqref{ch5} and \eqref{fluxa}. Therefore, \cite[Theorems\,5.1.1\,and\,2.4.3]{korobkov2024steady} (cf. with \cite[Theorem~1.1]{korobkov2015solution})
ensure the existence of a strong solution $(\ue_k, p_k) \in W^{2,2}(\mathcal{B}_k) \times W^{1,2}( \mathcal{B}_k)$ to the following
boundary-value problem:
\begin{equation}  \label{NSE-k}
\left\{
\begin{aligned}
 &- \Delta \mathbf{u}_k + (\mathbf{u}_k \cdot \nabla) \mathbf{u}_k + \nabla p_k = \mathbf{f} \quad\mbox{ in } \  \mathcal{B}_k \, , \quad \nabla \cdot \mathbf{u}_k = 0 \ \ \mbox{ in } \ \  \mathcal{B}_k \, , \\[4pt]
 &\mathbf{u}_k = \mathbf{U} \ \ \mbox{ on } \ \ \partial  \mathcal{B}_k \, .
 \end{aligned}
\right.
\end{equation}
As in \eqref{NSE-w}, given $k \in \mathbb{N}$ we set $$\mathbf{u}_{k}\doteq \mathbf{U} + \mathbf{w}_{k},$$ and the vector field $\mathbf{w}_{k} \in W^{2,2}(\mathcal{B}_k) \cap W_{0}^{1,2}(\mathcal{B}_k)$ must then satisfy, in strong form, the following system:
\begin{equation}  \label{NSE-wk}
\left\{
\begin{aligned}
&- \Delta \mathbf{w}_k + (\mathbf{w}_k \cdot \nabla) \mathbf{w}_k + (\mathbf{U} \cdot \nabla) \mathbf{w}_k +(\mathbf{w}_k \cdot \nabla) \mathbf{U} + \nabla p_k = \mathbf{g}\qquad\mbox{ in }\ \ \mathcal{B}_k, \\
& \nabla \cdot \mathbf{w}_k = 0\qquad\mbox{ in }\  \ \mathcal{B}_k, \\
&\mathbf{w}_k = \0 \qquad\mbox{ on }\ \ \partial  \mathcal{B}_k,
\end{aligned}
\right.
\end{equation}
where $\mathbf{g}=\mathbf{f}+\Delta\mathbf{U}-(\mathbf{U}\cdot\nabla)\mathbf{U}\in L^2(\mathcal{B}_k)$. In particular, setting
$$
W^{1,2}_{0,\sigma}(\mathcal{B}_k) \doteq \left\lbrace \mathbf{v} \in W_{0}^{1,2}(\mathcal{B}_k) \ \big| \ \nabla \cdot \mathbf{v} = 0 \ \ \mbox{in} \ \ \mathcal{B}_k \right\rbrace \, ,
$$
it follows that $\mathbf{w}_{k} \in W_{0,\sigma}^{1,2}(\mathcal{B}_k)$ is a \textit{weak solution} to \eqref{NSE-wk}, meaning that
\begin{equation} \label{weaksolutionsh}
	\int_{\mathcal{B}_k} \nabla \mathbf{w}_{k} \cdot \nabla \bbe + \int_{\mathcal{B}_k} (\mathbf{w}_{k} \cdot \nabla)\mathbf{w}_{k} \cdot \bbe +
	\int_{\mathcal{B}_k} (\mathbf{U} \cdot \nabla) \mathbf{w}_{k} \cdot \bbe + \int_{\mathcal{B}_k} (\mathbf{w}_{k} \cdot \nabla) \mathbf{U}\cdot
	\bbe = \int_{\mathcal{B}_k} \mathbf{g} \cdot \bbe
\end{equation}
for all $\bbe \in W^{1,2}_{0,\sigma}(\mathcal{B}_k)$. Let us define
\begin{equation}  \label{pp6}
	J_k \doteq \| \nabla \mathbf{w}_{k} \|_{L^{2}(\mathcal{B}_k)} \, ,\qquad \widehat{\mathbf{w}}_k\doteq\frac1{J_k} \, \mathbf{w}_{k} \, , \qquad \hp_k\doteq\frac1{J^2_k} \, p_k \qquad \forall k \in \mathbb{N} \, .
\end{equation}
We extend $\widehat{\mathbf{w}}_k$ trivially by zero to the whole $\Omega$ (keeping the same notation), so that
$(\widehat{\mathbf{w}}_{k})_{k  \in \mathbb{N}}  \subset W_{0,\sigma}^{1,2}(\Omega)$.

After setting \eqref{pp6}, we let $k \to \infty$. Two situations may arise.
\newline

\underline{Case (I):} if
\begin{equation}  \label{suppose1}
\liminf\limits_{k\to \infty} J_k < +\infty \, ,
\end{equation}
then the proof of Theorem~\ref{th:main} can be finished almost immediately. Indeed, since the Poincaré inequality holds in $\Omega$, along a~subsequence, $(\mathbf{w}_{k})_{k \in \mathbb{N}}$ is uniformly bounded with respect to $k \in \mathbb{N}$, ensuring the
existence of a vector field $\mathbf{w} \in W_{0,\sigma}^{1,2}(\Omega)$ such that
\begin{equation} \label{wksconv}
	\mathbf{w}_{k} \rightharpoonup \mathbf{w} \ \ \ \text{weakly in} \ W_{0,\sigma}^{1,2}(\Omega)  \ \ \ \text{as} \ \ k \to +\infty \, .
\end{equation}
Since $\we_k$ are solutions to~(\ref{NSE-wk}) in domains $\mathcal{B}_k$, 
the limit $\we$ is a weak solution to (\ref{NSE-w}) with 
\begin{equation} \label{d-b-w}
	\int_\Omega|\nabla\we|^2<\infty.
\end{equation}
Furthermore, since $\mathbf{U} \in W^{2,2}_{\loc}(\overline\Omega)$, $\mathbf{g} \in L_\loc^2(\overline\Omega)$, and $\partial \Omega$ is
of class $\mathcal{C}^{2}$, the regularity theory and standard estimates for the stationary Navier-Stokes system (see, e.g., \cite[Section 3.2.4]{korobkov2024steady})  can
be applied to deduce that $\mathbf{w}\in W_{\text{loc}}^{2,2}(\Omega)$, there exists a corresponding pressure function~$p\in W_\loc^{1,2}(\Omega)$, and the uniform estimates~(\ref{NSE-est}) hold (because of~(\ref{d-b-w}) and homogeneous boundary condition~$\we|_{\partial\Omega}\equiv\0$\,). 
This proves the existence statement of Theorem~\ref{th:main} in Case (I). Therefore, from now on we will always assume that an alternative option is taking place,  that means,
\newline

\underline{Case (II):} there holds
$$\lim\limits_{k\to \infty} J_k = +\infty \, .$$
This case is much more involved because, it relies on the study of several properties of weak solutions to a stationary Euler system, see \eqref{Euler0} below.
By construction, for any $k \in \mathbb{N}$, the normalized functions in \eqref{pp6} satisfy (in strong form) the system
\begin{equation} \label{NS-Ek}
	\left\{
\begin{aligned}
	& - \dfrac{1}{J_k} \Delta \widehat{\mathbf{w}}_k + (\widehat{\mathbf{w}}_k \cdot \nabla) \widehat{\mathbf{w}}_k + \dfrac{1}{J_k}(\mathbf{U} \cdot \nabla) \widehat{\mathbf{w}}_k + \dfrac{1}{J_k}(\widehat{\mathbf{w}}_k \cdot \nabla) \mathbf{U} + \nabla \widehat{p}_k = \dfrac{1}{J^{2}_{k}} \, \mathbf{g} \ \ \mbox{ in } \ \ \mathcal{B}_k \, , \\[4pt]
	& \nabla \cdot \widehat{\mathbf{w}}_k = 0 \ \ \mbox{ in } \ \ \mathcal{B}_k \, , \\[4pt]
	&\widehat{\mathbf{w}}_k = \0 \ \ \mbox{ on } \ \ \partial \mathcal{B}_k \, ,
\end{aligned}
\right.
\end{equation}
its weak formulation
\begin{equation} \label{weaksolutionshnormalized}
\dfrac{1}{J_k} \int_{\mathcal{B}_k} \nabla \widehat{\mathbf{w}}_k \cdot \nabla \bbe + \int_{\mathcal{B}_k} (\widehat{\mathbf{w}}_k \cdot
\nabla)\widehat{\mathbf{w}}_k \cdot \bbe + \dfrac{1}{J_k} \int_{\mathcal{B}_k} [(\mathbf{U} \cdot \nabla) \widehat{\mathbf{w}}_k +
(\widehat{\mathbf{w}}_k \cdot \nabla) \mathbf{U}] \cdot \bbe = \dfrac{1}{J^{2}_{k}} \int_{\mathcal{B}_k} \mathbf{g} \cdot \bbe
\end{equation}
for all $\bbe \in W^{1,2}_{0, \sigma}(\mathcal{B}_k)$, together with the identity
\begin{equation} \label{hatu-1}
	\int_{  \mathcal{B}_{k}} | \nabla \widehat{\mathbf{w}}_k |^2 = \int_{\Omega} | \nabla \widehat{\mathbf{w}}_k |^2 =1 \, .
\end{equation}
Then, there exists a~vector field $\mathbf{v} \in W_{0,\sigma}^{1,2}(\Omega)$ such that, for every $t \in [2, +\infty)$,
\begin{equation} \label{wksconv2}
	\widehat{\mathbf{w}}_k \rightharpoonup \mathbf{v} \ \ \ \text{weakly in} \ W_{0,\sigma}^{1,2}(\Omega) \qquad \text{and} \qquad \widehat{\mathbf{w}}_k \to \mathbf{v} \ \ \ \text{strongly in} \ L_{\text{loc}}^{t}(\Omega) \ \ \ \text{as} \ \ k \to +\infty \, ,
\end{equation}
and, moreover,
\begin{equation} \label{310}
	\int_{\Omega} |\nabla\ve|^2\le1 \, .
\end{equation}
Proceeding as in \cite{kpr,korobkov2015solution}, taking the limit as $k \to +\infty$ in \eqref{weaksolutionshnormalized} allows us to obtain a scalar pressure $q \in L_{\text{loc}}^{2}(\Omega)$ such that
\begin{equation} \label{weaksolutionsh4}
\int_{\Omega} (\mathbf{v} \cdot \nabla)\mathbf{v} \cdot \bbe - \int_{\Omega} q (\nabla \cdot \bbe) = 0  \qquad \forall \bbe \in W^{1,2}_{0}(\Omega) \, .
\end{equation}
Therefore, the pair $(\mathbf{v},q) \in W_{0,\sigma}^{1,2}(\Omega) \times L_{\text{loc}}^{2}(\Omega)$ is a weak solution to the following stationary Euler system:
\begin{equation}  \label{Euler0}
	\left\{
\begin{aligned}
	& (\mathbf{v} \cdot \nabla) \mathbf{v}+ \nabla q = \0 \ \ \mbox{ in } \ \ \Omega \, ,\\[4pt] &\nabla \cdot \mathbf{v} = 0 \ \ \mbox{ in } \ \ \Omega \, , \\[4pt]
	& \mathbf{v} = \0 \ \ \mbox{ on } \ \ \partial \Omega \, .
\end{aligned}
\right.
\end{equation}
By Sobolev embedding we have $(\mathbf{v} \cdot \nabla) \mathbf{v} \in L^{s}_\loc(\bar\Omega)$ for every $s \in [1, 2)$, so that
\begin{equation} \label{311}
q \in W^{1,s}_{\text{loc}}(\bar\Omega) \quad \text{for every} \ s \in [1, 2) \, .
\end{equation}
The next assertion is one of the main novelties of the present paper and it is fundamental for our subsequent arguments. 

 \begin{theorem}\label{Euler=zero} Under above assumptions the solution $\ve$ to the Euler system \eqref{Euler0}, obtained through \eqref{wksconv2}, is
\textit{trivial}, that is,
\begin{equation}  \label{pp8}
	\ve = \0 \ \ \mbox{ almost everywhere in } \ \Omega \, .
\end{equation}
\end{theorem}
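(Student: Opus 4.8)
The plan is to transfer the Korobkov--Pileckas--Russo analysis of the limiting Euler solution from bounded planar domains (\cite{kpr,korobkov2015solution,korobkov2024steady}) to the present unbounded setting, the only genuinely new ingredient being the control of the level-set geometry near infinity inside the outlets.

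\emph{Stream function and normalisation.} Since $\ve\in W^{1,2}_{0,\sigma}(\Omega)$ and, by construction, $\widehat{\mathbf w}_k=(\ue_k-\mathbf{U})/J_k$ with $\ue_k=\ae$ on $\partial\mathcal B_k$, the Divergence Theorem together with \eqref{vecpsi} shows that the circulation of $\ve$ around each obstacle $\Sigma_i$ and the flux of $\ve$ through each cross-section $\sigma_j^\ell$ both vanish. Hence $\ve$ admits a single-valued stream function $\psi\in W^{2,2}_{\loc}(\Omega)$ with $\ve=\nabla^{\perp}\psi$ and $\int_\Omega|\nabla^2\psi|^2=\int_\Omega|\nabla\ve|^2\le1$. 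Because $\ve=\0$ on $\partial\Omega$, $\psi$ is locally constant on $\partial\Omega$; since every outlet carries zero flux, $\psi$ has one and the same value on all curves $\Gamma_j$, which we normalise to $0$, and a constant value $\xi_i$ on each $\partial\Sigma_i$. Finally, $\nabla\psi\in L^2(\Omega)$ (as $|\nabla\psi|=|\ve|$) and, by the Poincaré inequality on the strip $\Omega_j$, also $\psi\in L^2(\Omega_j)$, so $\int_{\Omega_j^{R,\infty}}\bigl(|\psi|^2+|\nabla\psi|^2\bigr)\to0$ and $\int_{\Omega_j^{R,\infty}}|\nabla\ve|^2\to0$ as $R\to+\infty$; this is what will confine the relevant level sets of $\psi$ to a large ball $B_R\supset\Omega_0$.

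\emph{Bernoulli law.} Writing $\omega\doteq\partial_1 v_2-\partial_2 v_1=\Delta\psi\in L^2(\Omega)$, the Euler system \eqref{Euler0} rewrites as $\nabla\Phi=\omega\,\nabla\psi$ for the total head $\Phi\doteq q+\tfrac12|\ve|^2\in W^{1,s}_{\loc}(\oO)$, $s\in[1,2)$. The decisive point---and the only place where the Morse--Sard-type results of Bourgain, Korobkov \& Kristensen \cite{BKK13} are used---is that $\Phi$ possesses a representative which is continuous on $\mathcal H^1$-almost every level set $\psi^{-1}(t)$ and is \emph{constant on each connected component} of $\psi^{-1}(t)$, while for a.e. $t$ the set $\psi^{-1}(t)$ is a finite union of $\mathcal C^1$ curves along which $\nabla\psi\neq\0$. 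Moreover, from the boundary regularity of $(\ve,q)$ one obtains that $q$, hence $\Phi$ (as $\ve=\0$ there), is constant on each connected component of $\partial\Omega$; call these constants $\hat p_0$ on $\Gamma$ and $\hat p_i$ on $\partial\Sigma_i$, and note that the decay recorded above yields $\Phi\to\hat p_0$ along each outlet.

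\emph{Leray's reductio ad absurdum.} Assume $\ve\not\equiv\0$. If all $\hat p_i$ equal $\hat p_0$, then the level-set analysis of \cite{korobkov2015solution} forces $\Phi$ to attain its essential supremum and infimum on $\partial\Omega$, whence $\Phi\equiv\hat p_0$ and $\omega\,\nabla\psi\equiv\0$; since $\nabla\ve=\0$ a.e. on $\{\ve=\0\}=\{\nabla\psi=\0\}$, this gives $\omega\equiv0$ in $\Omega$, so $\ve$ is harmonic componentwise, with zero trace and finite Dirichlet energy, hence $\ve\equiv\0$ (integrate $|\nabla\ve|^2$ by parts, using the exponential decay of such $\ve$ in the outlets)---a contradiction. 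If instead some $\hat p_i\neq\hat p_0$, say $\max_i\hat p_i>\hat p_0$ (the remaining cases being analogous), then for $F$ between $\hat p_0$ and $\max_i\hat p_i$ one studies the sub/super-level sets of $\Phi$ trapped between level components of $\psi$ and combines the identity $\nabla\Phi=\omega\,\nabla\psi$, the Coarea formula, and the flux relation $\int_{\psi^{-1}(t)}|\nabla\psi|\,d\mathcal H^1=-\int_{\{\psi<t\}}\omega$ along these curves to produce, on a parameter set of positive measure, a lower bound on the local Dirichlet energy that, once integrated, contradicts $\int_\Omega|\nabla\ve|^2\le1$. In either case $\ve\equiv\0$, as claimed.

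\emph{Main obstacle.} The delicate new point, absent in the bounded case, is to guarantee that the level curves of $\psi$ entering the argument are compact---or can be completed through the outlets, where $\psi\to0$ and the Dirichlet energy is arbitrarily small at large distances---and that the Bernoulli constant ``at infinity'' in each outlet is exactly $\hat p_0$; this is what reduces the whole scheme to a bounded truncation $\Omega^{R}$ and to the arguments of \cite{korobkov2015solution,korobkov2024steady}. A further technical nuisance is that $\Phi$ is only of class $W^{1,s}_{\loc}$ with $s<2$, so every manipulation of level sets must be carried out within the Bourgain--Korobkov--Kristensen framework rather than classically.
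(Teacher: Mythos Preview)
Your scaffolding is right, but both the dichotomy and the mechanism of contradiction differ from the paper's in ways that matter. The paper splits not on whether the boundary constants $\hat p_i$ agree, but on whether $\esssup_\Omega\Phi>0$ (after normalising so that $\Phi\equiv0$ on~$\Gamma$). When $\esssup_\Omega\Phi>0$, the paper sidesteps your ``main obstacle'' entirely: it selects a single regular cycle $S\subset\Omega$ with $\Phi(S)=t_0>0$ and $\esssup_{\Omega_S}\Phi>t_0$, and then runs the bounded-domain argument of \cite{korobkov2015solution,korobkov2024steady} verbatim inside the enclosed region $\Omega\cap\Omega_S$, with $S$ playing the role of the outer boundary component. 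No level curves escaping to infinity are ever analysed. When $\esssup_\Omega\Phi=0$, the paper gives a short, new, purely Euler-level argument that you do not mention: integrating the pointwise identity $\div\bigl(q\,z+(\ve\cdot z)\,\ve\bigr)=2\Phi$ over $\Omega^t$ and using $\check q_i\le0$, $\Phi\le0$ yields $\sum_j\int_{\sigma_j^t}(|q|+|\ve|^2)\gtrsim 1/t$, which contradicts $q,|\ve|^2\in L^1(\R^2)$. This is the genuinely novel step in the appendix and requires only condition~(E), not the approximation by Navier--Stokes solutions.

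The genuine gap in your proposal is Case~2. The KPR contradiction in \cite{korobkov2015solution} is \emph{not} obtained by bounding the Euler Dirichlet energy from below and playing it off against $\int_\Omega|\nabla\ve|^2\le1$; it goes back to the Navier--Stokes approximants $\widehat\we_k$ and crucially uses that $\Phi_k=\hat p_k+\tfrac12|\widehat\ue_k|^2$ converges \emph{uniformly} to the Bernoulli constant on regular cycles (the (E--NS) input, Proposition~\ref{regc-ax} here), after which one estimates the flux of $\nabla\Phi_k$ across nested families of level sets. As you have written it, Case~2 omits the approximants altogether and asserts a coarea-type lower bound with no counterpart in the literature. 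Your Case~1 likewise leans on a maximum principle for $\Phi$ (``sup and inf attained on $\partial\Omega$'') that is itself one of the hard conclusions of the KPR machinery and, in the form you need, is established through the same (E--NS) argument rather than being an a~priori fact about $W^{1,2}$ Euler solutions.
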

Since the proof of  \eqref{pp8} is quite lengthy, involving several subtle tools from real analysis and topology, we postpone it to Appendix~\ref{poet2}.  Note that, unlike the situation in \cite{korobkov2015solution}, the triviality of the limit solution {\bf does not} entail a contradiction and by no means ends the proof of Theorem~\ref{th:main}. However, identity~(\ref{pp8}) plays a key role in proving the Leray--Hopf inequality (see~(\ref{gg10}) below) and in establishing a~global linear estimate~(\ref{gg1}) 
for the~energy of the~constructed solutions.

\subsection{The global linear growth of the Dirichlet integral}
\label{poet4}
Let $\we_k$ be as in \eqref{NSE-wk},
$\mathcal{B}_k$ in Figure~\ref{fig:2}, and $\Omega^a$ in \eqref{Omega_ab}.

For the invading domains, from now on we assume that
\begin{equation}\label{ug7}
\Omega^{t_k}\subset  \mathcal{B}_k\subset \Omega^{t_k+5}\end{equation}
for some sequence of $t_k \to+\infty$. The aim of this section is to prove the following global estimate:
\begin{equation}\label{gg1}
J^2_k=\int\limits_{  \mathcal{B}_k}|\nabla\we_k|^2\le c_0 \,t_k
\end{equation}
for some $c_0>0$ independent of $k$. By contradiction, suppose that \eqref{gg1} is not true; then, up to a subsequence, we have
\begin{equation}\label{gg3-0}
\frac{J_k^2}{t_k}\to+\infty\quad\mbox{ as }\ k \to +\infty.
\end{equation}

Taking (\ref{weaksolutionsh}) with $\boldsymbol{\eta}\doteq\we_k$, we get
\begin{align}
	J_k^2&= \int\limits_{  \mathcal{B}_k}\mathbf{g}\cdot\we_k-\int\limits_{  \mathcal{B}_k}(\mathbf{w}_k \cdot \nabla) \mathbf{U}\cdot\we_k  \nonumber \\[4pt]
	&=\int\limits_{  \mathcal{B}_k}\mathbf{g}\cdot\we_k-\int\limits_{  \mathcal{B}_k\setminus\Omega^2}(\mathbf{w}_k \cdot \nabla) \mathbf{U}\cdot\we_k-\int\limits_{\Omega^2}(\mathbf{w}_k \cdot \nabla) \mathbf{U}\cdot\we_k=: I_1+I_2+I_3.\label{gg3}
\end{align}
Due to (\ref{uef}) and (\ref{uboundz2}), we have $\|\mathbf{g}\|_{L^2(  \mathcal{B}_k)}\lesssim \sqrt{t_k}$ and, hence,
by the H\"older inequality and \eqref{gg3-0} (combined with the Poincaré inequality), we obtain
\begin{equation}\label{gg4}
|I_1|=\e_k\, J_k^2
\end{equation}
with $\e_k\to0$ as $k\to\infty$. Further, by the Leray-Hopf inequality (\ref{HI}), we have
\begin{equation}\label{gg5}
|I_2|\le\frac18\,{J_k^2}.
\end{equation}
Therefore, (\ref{gg3}) implies that
$$
{J_k^2}\le 2 I_3=-2\int\limits_{\Omega^2}(\mathbf{w}_k \cdot \nabla) \mathbf{U}\cdot\we_k
$$
for $k$ large enough. Let $\widehat\we_k$ be as in \eqref{pp6}, then the last inequality reads
\begin{equation}\label{gg7}-2\int\limits_{\Omega^2}(\widehat\we_k \cdot \nabla) \mathbf{U}\cdot\widehat\we_k\ge1
\end{equation}
for $k$ large enough.
Since $\Omega^2$ is a~bounded domain with Lipschitz boundary, we have the~weak convergence $\widehat\we_k\rightharpoonup\ve$ in $W^{1,2}(\Omega^2)$ and the~strong convergence
$\widehat\we_k\to\ve$ in any $L^s(\Omega^2)$ with $s<\infty$, where $\ve$ is the corresponding limiting solution to the Euler system~(\ref{Euler0}). Then, letting $k\to\infty$ in (\ref{gg7}), we get
\begin{equation}\label{gg8}-2\int\limits_{\Omega^2}(\ve \cdot \nabla) \mathbf{U}\cdot\ve\ge 1.
\end{equation}
This inequality contradicts (\ref{pp8}), thereby proving the~global linear growth estimate~(\ref{gg1}).

\subsection{The uniform linear growth of the Dirichlet integral}\label{poet6}

Let $\we_k$ be as in \eqref{NSE-wk} and $\Omega^t$ be as in \eqref{Omega_ab}.
If the following double limit is finite
$$
\varliminf_{t\to +\infty} \, \varliminf_{k\to +\infty} \int\limits_{\Omega^t}|\nabla\we_{k}|^2 < +\infty,
$$
then it is straightforward to obtain \eqref{NSE-est} and to finish the proof of Theorem~\ref{th:main} (as in Case~I in \linebreak Subsection~\ref{proofs-sec}\,). So, from now on, we assume that
\begin{equation}\label{unbw}
	\lim_{t\to +\infty} \, \varliminf_{k\to +\infty} \int\limits_{\Omega^t}|\nabla\we_{k}|^2 = +\infty.
\end{equation}
The main tool for the subsequent arguments is the following Leray--Hopf type inequality. 

\begin{theorem}\label{prep-l2}
Let $t_k$ be as in \eqref{ug7}. If \eqref{unbw} holds, then there exists $t_*>3$ and $k_*\in\N$ such that
	for all $k\ge k_*$, we have
	\begin{equation}\label{gg11}\biggl|\int\limits_{\Omega^2}(\mathbf{w}_k \cdot \nabla) \mathbf{U}\cdot\we_k\biggr|\le
		\frac18\int\limits_{\Omega^{t_*}}|\nabla\we_k|^2, 
	\end{equation}
	\begin{equation}\label{gg10}\biggl|\int\limits_{\Omega^t}(\mathbf{w}_k \cdot \nabla) \mathbf{U}\cdot\we_k\biggr|\le
		\frac14\int\limits_{\Omega^t}|\nabla\we_k|^2\qquad \forall t\in [t_*, t_k].
	\end{equation}
\end{theorem}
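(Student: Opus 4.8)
The plan is to establish the harder inequality \eqref{gg11} by Leray's \emph{reductio ad absurdum}, and then to deduce \eqref{gg10} from it almost for free. For the latter reduction, observe that for $2\le t\le t_k$ the set $\Omega^t$ is, up to a null set, the disjoint union $\Omega^2\cup\bigcup_{j=1}^J\Omega_j^{2,t}$ with each $\Omega_j^{2,t}\subset\mathcal{B}_k$; since the restriction of $\we_k$ to $\Omega_j^{2,t}$ is solenoidal and vanishes on $\Gamma\cap\partial\Omega_j^{2,t}\subset\partial\Omega$, it is admissible in the uniform Leray--Hopf inequality \eqref{HI}, and summing over $j$ gives $\bigl|\int_{\Omega^{2,t}}(\we_k\cdot\nabla)\mathbf{U}\cdot\we_k\bigr|\le\frac18\int_{\Omega^t}|\nabla\we_k|^2$. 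Adding \eqref{gg11} and using that $t\mapsto\int_{\Omega^t}|\nabla\we_k|^2$ is nondecreasing (so $\int_{\Omega^{t_*}}|\nabla\we_k|^2\le\int_{\Omega^t}|\nabla\we_k|^2$ for $t\ge t_*$) yields \eqref{gg10} on $[t_*,t_k]$. Thus everything rests on \eqref{gg11}.

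To prove \eqref{gg11} I would argue by contradiction. If no pair $(t_*,k_*)$ works, then for each integer $m\ge4$ the set of indices $k$ with $\bigl|\int_{\Omega^2}(\we_k\cdot\nabla)\mathbf{U}\cdot\we_k\bigr|>\frac18\int_{\Omega^m}|\nabla\we_k|^2$ is infinite. By \eqref{unbw} one has $\varliminf_k\int_{\Omega^m}|\nabla\we_k|^2\to\infty$ as $m\to\infty$, so along this bad set I can pick $k=K(m)$ strictly increasing with $t_{K(m)}>m$ and $\mu_m^2\doteq\int_{\Omega^m}|\nabla\we_{K(m)}|^2\to\infty$. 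Setting $\widetilde\we_m\doteq\we_{K(m)}/\mu_m$ and $\widetilde p_m\doteq p_{K(m)}/\mu_m^2$, one gets $\|\nabla\widetilde\we_m\|_{L^2(\Omega^T)}\le1$ for every fixed $T$ and all $m\ge T$, while $\bigl|\int_{\Omega^2}(\widetilde\we_m\cdot\nabla)\mathbf{U}\cdot\widetilde\we_m\bigr|>\frac18$. Since $\widetilde\we_m$ vanishes on $\partial\Omega\cap\partial\Omega^T$ (it is the trace of a function in $W^{1,2}_0(\mathcal{B}_{K(m)})$ as soon as $t_{K(m)}>T$) and the obstacle boundaries carry positive length, Poincaré's inequality on each $\Omega^T$ makes $\{\widetilde\we_m\}$ bounded in $W^{1,2}_{\loc}(\Omega)$, so along a subsequence $\widetilde\we_m\rightharpoonup\mathbf{v}_\star$ weakly in $W^{1,2}_{\loc}(\Omega)$ and strongly in $L^s_{\loc}(\Omega)$ for all $s<\infty$, with $\mathbf{v}_\star$ solenoidal, $\mathbf{v}_\star=\0$ on $\partial\Omega$, and $\int_\Omega|\nabla\mathbf{v}_\star|^2\le1$ (hence $\mathbf{v}_\star\in W^{1,2}_{0,\sigma}(\Omega)$ by Poincaré in $\Omega$).

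Next I would divide the weak formulation \eqref{weaksolutionsh} of \eqref{NSE-wk} by $\mu_m^2$ and let $m\to\infty$ (for test functions with fixed compact support). The viscous term $\mu_m^{-1}\int\nabla\widetilde\we_m\cdot\nabla\bbe$, the linear terms $\mu_m^{-1}\int[(\mathbf{U}\cdot\nabla)\widetilde\we_m+(\widetilde\we_m\cdot\nabla)\mathbf{U}]\cdot\bbe$, and the forcing $\mu_m^{-2}\int\mathbf{g}\cdot\bbe$ all tend to $0$ (using $\mathbf{U}\in W^{2,2}_{\loc}(\overline\Omega)$, the uniform bound \eqref{uef} on $\mathbf{g}$, and $\mu_m\to\infty$), whereas the inertial term converges by the strong $L^s_{\loc}$ convergence; exactly as in the derivation of \eqref{weaksolutionsh4}, this shows that $\mathbf{v}_\star$, together with a pressure $q_\star\in L^2_{\loc}(\Omega)$, is a weak solution of the Euler system \eqref{Euler0}, of precisely the same type as the solution $\ve$ obtained through \eqref{wksconv2}. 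Hence Theorem~\ref{Euler=zero} gives $\mathbf{v}_\star\equiv\0$, and then, by the strong $L^4_{\loc}$ convergence together with $\nabla\mathbf{U}\in L^2(\Omega^2)$, $\int_{\Omega^2}(\widetilde\we_m\cdot\nabla)\mathbf{U}\cdot\widetilde\we_m\to\int_{\Omega^2}(\mathbf{v}_\star\cdot\nabla)\mathbf{U}\cdot\mathbf{v}_\star=0$, contradicting the lower bound $\frac18$. This contradiction proves \eqref{gg11}, and therefore also \eqref{gg10}.

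I expect the main obstacle to be the appeal to Theorem~\ref{Euler=zero}: here $\mathbf{v}_\star$ is produced by renormalizing with the \emph{partial} Dirichlet energy $\int_{\Omega^m}|\nabla\we_k|^2$ rather than with the global $J_k$ of \eqref{pp6}, so one must check that the (lengthy) triviality argument of Appendix~\ref{poet2} uses only that $\mathbf{v}_\star$ is a weak $W^{1,2}_{\loc}(\Omega)$-limit of (vanishing-viscosity) invading-domain Navier--Stokes solutions with $\mathbf{v}_\star|_{\partial\Omega}=\0$ and $\int_\Omega|\nabla\mathbf{v}_\star|^2\le1$, and not the particular way the limit was obtained. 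The remaining ingredients — the splitting of $\Omega^t$ and the use of \eqref{HI} on the outlet portions, the bookkeeping in the choice of $K(m)$, and the standard limit passage in the weak formulation — are routine.
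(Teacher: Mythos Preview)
Your proposal is correct and follows essentially the same route as the paper: prove \eqref{gg11} by Leray's contradiction argument (renormalize by the partial Dirichlet energy over a growing truncated domain, pass to a limiting Euler solution, invoke Theorem~\ref{Euler=zero} to get triviality, and contradict the surviving lower bound $\tfrac18$), then obtain \eqref{gg10} by splitting $\Omega^t=\Omega^2\cup\Omega^{2,t}$ and combining \eqref{gg11} with the outlet Leray--Hopf inequality \eqref{HI}. The paper's own proof is more terse on both counts --- it simply asserts ``as for (\ref{Euler0})--(\ref{pp8})'' for the triviality of the partially-renormalized limit and ``Using (\ref{HI}) and \eqref{gg11}'' for the passage to \eqref{gg10} --- so your flagged concern about whether the Appendix~\ref{poet2} argument applies to the $\mu_m$-renormalized sequence is exactly the point the paper leaves implicit; it does go through, since that argument uses only the vanishing-viscosity structure of the rescaled system and the local $W^{1,2}$/$L^s$ convergences, not the specific choice of normalizing constant.
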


\begin{proof} 
	We prove \eqref{gg11} by contradiction. Assume it fails, then, by passing to a subsequence,  there exists a strictly increasing sequence of numbers
	$t^*_k \to +\infty$  such that $t^*_k<t_k$ and 
	\begin{equation}\label{gg10-2}
		\biggl|\int\limits_{\Omega^2}(\mathbf{w}_{k} \cdot \nabla) \mathbf{U}\cdot\we_{k}\biggr|>
		\frac18\int\limits_{\Omega^{t^*_k} }|\nabla\we_{k}|^2.
	\end{equation}
Denote
	\begin{equation}
		\widetilde{J}_k \doteq \left(\int_{\Omega^{t^*_k}} |\nabla \we_k|^2\right)^\frac12,\qquad\widetilde{\we}_k\doteq\frac1{\widetilde{J}_k}\we_k, \qquad \widetilde{p}_k\doteq\frac1{ \bigl(\widetilde{J}_k\bigr)^2 }p_k.
	\end{equation}
	By \eqref{unbw}, we have $\widetilde{J}_k \to +\infty$. Hence, $\widetilde{\we}_k$ converges weakly to an Euler solution $\widetilde{\ve}$
in $\Omega$ and, moreover, as for (\ref{Euler0})--\eqref{pp8}, we have the identity
	\begin{equation} \label{tildeve-0}
		\widetilde{\ve} \equiv 0.
	\end{equation}
	From \eqref{gg10-2}, we deduce that
	\begin{equation}
		\biggl|\int\limits_{\Omega^2}(\widetilde{\mathbf{w}}_{k} \cdot \nabla) \mathbf{U}\cdot\widetilde{\we}_{k}\biggr|>
		\frac18,
	\end{equation}
	and taking the limit for $k \to +\infty$,  we get
	\begin{equation} \label{519}
		\biggl|\int\limits_{\Omega^2}(\widetilde{\ve} \cdot \nabla) \mathbf{U}\cdot\widetilde{\ve}\biggr|\ge\frac18,	\end{equation}
which contradicts~\eqref{tildeve-0} and proves~\eqref{gg11}.  Using (\ref{HI}) and \eqref{gg11}, we infer the second bound~\eqref{gg10}.	
\end{proof}

The main result of this section is a uniform growth estimate for the Dirichlet integrals, see Lemmas~\ref{lem:61}--\ref{lem:62} below.
We prove it by using the Ladyzhenskaya \& Solonnikov approach~\cite{ladyzhenskaya1983determination}, with adaptations to our more general
setting that requires Theorem~\ref{prep-l2} instead of the usual Leray-Hopf inequality. For the reader's convenience, we will repeat
the main ideas of \cite{ladyzhenskaya1983determination} in detail but with some simplifications. We need the following slight variant of an elementary comparison principle from one-dimensional real analysis due to Ladyzhenskaya \& Solonnikov \cite[Lemma 2.3]{ladyzhenskaya1983determination}.

\begin{lemma}\label{cc-l1} {\sl Let $h(t)$ and $\varphi(t)$ be smooth nondecreasing nonnegative functions satisfying the following estimates:
\begin{equation}\label{ug2}h(t)\le\Psi\bigl(h'(t)\bigr)+\frac12\varphi(t)\qquad\forall t\in[t_0,T],
\end{equation}
\begin{equation}\label{ug3}\vp(t)\ge 2\,\Psi\bigl(\vp'(t)\bigr)\qquad\forall t\in[t_0,T],\end{equation}
where $\Psi:\R_+\to\R_+$ is a~nondecreasing function. Suppose also that
\begin{equation}\label{ug4}h(T)\le \vp(T),\end{equation}
then
\begin{equation}\label{ug5}h(t)\le \vp(t)\qquad
\forall t\in [t_0,T].\end{equation}}
\end{lemma}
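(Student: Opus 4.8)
The plan is to prove this elementary one-dimensional comparison principle (Lemma~\ref{cc-l1}) by a standard \emph{continuity / connectedness} argument on the set where the desired inequality $h(t)\le\vp(t)$ holds. Define $E\doteq\{t\in[t_0,T]\,:\,h(s)\le\vp(s)\ \forall s\in[t,T]\}$. By \eqref{ug4} we have $T\in E$, so $E\neq\emptyset$; it is closed by continuity of $h$ and $\vp$, and it is of the form $[t_*,T]$ for some $t_*\in[t_0,T]$ since it is clearly ``downward relatively closed'' toward $T$. The goal is to show $t_*=t_0$; suppose instead $t_*>t_0$. Then by definition $h(t_*)\le\vp(t_*)$ but for every $t<t_*$ arbitrarily close to $t_*$ there is a point $s\in[t,t_*)$ with $h(s)>\vp(s)$.

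The key step is to rule out this scenario using the two differential inequalities \eqref{ug2}--\eqref{ug3} at the ``first crossing point''. First I would argue that at $t=t_*$ one actually has equality $h(t_*)=\vp(t_*)$: if $h(t_*)<\vp(t_*)$, continuity would give a whole neighborhood where $h<\vp$, contradicting that points just below $t_*$ violate the inequality. Now at such a crossing point $t_*$, since $h-\vp$ attains the value $0$ at $t_*$ and is positive immediately to the left, we get $h'(t_*)\le\vp'(t_*)$ (comparing one-sided behavior; more carefully, there is a sequence $s_n\uparrow t_*$ with $h(s_n)>\vp(s_n)=\vp(s_n)$ and $h(t_*)=\vp(t_*)$, forcing $\liminf$ of difference quotients to give $h'(t_*)\le\vp'(t_*)$). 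Since $\Psi$ is nondecreasing, $\Psi(h'(t_*))\le\Psi(\vp'(t_*))$. Plugging into \eqref{ug2} and then \eqref{ug3}:
\[
h(t_*)\le\Psi\bigl(h'(t_*)\bigr)+\tfrac12\vp(t_*)\le\Psi\bigl(\vp'(t_*)\bigr)+\tfrac12\vp(t_*)\le\tfrac12\vp(t_*)+\tfrac12\vp(t_*)=\vp(t_*),
\]
which reproduces $h(t_*)\le\vp(t_*)$ — not yet a contradiction by itself. The actual contradiction comes from pushing this slightly: one needs the \emph{strict} version near $t_*$, i.e.\ that $h(t)<\vp(t)$ in a left-neighborhood, contradicting the choice of $t_*$ as an endpoint of $E$.

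To make this rigorous, the cleanest route (and the one I expect to be the real technical heart) is a small perturbation: for $\lambda>0$ set $\vp_\lambda(t)\doteq\vp(t)+\lambda(T-t+1)$, note $\vp_\lambda$ still satisfies a variant of \eqref{ug3} for $\lambda$ small because $\vp_\lambda'=\vp'-\lambda<\vp'$ and $\Psi$ is nondecreasing while $\vp_\lambda>\vp$, and $h(T)\le\vp(T)<\vp_\lambda(T)$; one then shows $h(t)<\vp_\lambda(t)$ on all of $[t_0,T]$ by the connectedness argument above (now with a strict sign that \emph{is} preserved at the crossing point, since $h(t_*)=\vp_\lambda(t_*)$ would force $h'(t_*)\le\vp_\lambda'(t_*)$ and then $h(t_*)\le\Psi(\vp_\lambda'(t_*))+\tfrac12\vp(t_*)<\Psi(\vp_\lambda'(t_*))+\tfrac12\vp_\lambda(t_*)\le\vp_\lambda(t_*)$, a genuine contradiction). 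Finally, let $\lambda\to0^+$ to conclude $h(t)\le\vp(t)$ on $[t_0,T]$.

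The main obstacle is handling the crossing point when $\Psi$ is merely nondecreasing (hence possibly discontinuous) and $h,\vp$ are only assumed smooth in the weak sense the statement allows — one must be careful that ``first crossing'' derivative comparisons are valid and that the strict inequality is truly propagated. The perturbation trick with $\vp_\lambda$ is precisely what converts the borderline (non-strict) estimate chain into a strict contradiction, so I would organize the write-up around that device; everything else is routine real analysis. As a sanity check, this is a standard ODE-comparison (Gronwall-type) lemma and the authors explicitly cite it from \cite[Lemma~2.3]{ladyzhenskaya1983determination}, so the argument should be short once the perturbation is in place.
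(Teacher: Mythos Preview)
Your perturbation argument is correct, but it is considerably more involved than the paper's proof, which dispenses with the perturbation entirely by reversing the direction of the argument. Instead of working \emph{backward} from $T$ to a first crossing point (where you only know $h(t_*)=\vp(t_*)$ and hence get no strictness from the chain \eqref{ug2}--\eqref{ug3}), the paper works \emph{forward} from any point $t$ where $h(t)>\vp(t)$: the strict inequality $h(t)>\vp(t)$ feeds directly into the chain to give
\[
\Psi\bigl(h'(t)\bigr)\ \ge\ h(t)-\tfrac12\vp(t)\ >\ \tfrac12\vp(t)\ \ge\ \Psi\bigl(\vp'(t)\bigr),
\]
and since $\Psi$ is nondecreasing this forces $h'(t)>\vp'(t)$ \emph{strictly}. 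Thus on the open set $\{h>\vp\}$ the difference $h-\vp$ is strictly increasing, so once positive it stays positive all the way to $T$, contradicting \eqref{ug4}. The point is that the strict hypothesis $h>\vp$ already delivers the strictness you manufacture via $\vp_\lambda$; by looking at the crossing point (where equality holds) you throw this away and then have to recover it artificially. Your approach does work, but the forward argument is a two-line proof with no $\e$'s or $\lambda$'s. A minor technical remark: in your perturbation you should note that $\vp_\lambda'(t_*)\ge h'(t_*)\ge0$ at the crossing point, so that $\Psi(\vp_\lambda'(t_*))$ is well defined even though $\vp_\lambda'$ may be negative elsewhere.
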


\medskip

\begin{proof}
 	 Suppose that
	\begin{equation}\label{ug6}h(t)>\vp(t)\end{equation}
	for some $t\in [t_0,T)$, then
	$$\Psi\bigl(h'(t)\bigr)\overset{(\ref{ug2})}\ge h(t)-\frac12\vp(t)\overset{(\ref{ug6})}>\frac12\vp(t)\overset{(\ref{ug3})}\ge\Psi\bigl(\vp'(t)\bigr),$$
	therefore,
	$$h'(t)>\vp'(t)$$
	for all $t$ satisfying~(\ref{ug6}). Hence, (\ref{ug6}) remains true until $t=T$, which contradicts \eqref{ug4}.
\end{proof}

\begin{lemma} \label{lem:61} Under the assumption \eqref{unbw}, there holds that
\begin{equation}\label{ug1}
D_k(t) \doteq\int\limits_{\Omega^t }|\nabla\we_k|^2\le c_0t+c_1, \qquad \forall t\ge0,
\end{equation}
for some $c_0, c_1>0$ independent of~$k$ and $t$.
\end{lemma}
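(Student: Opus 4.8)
The plan is to run a differential-inequality argument on the function $D_k(t)=\int_{\Omega^t}|\nabla\we_k|^2$, following Ladyzhenskaya \& Solonnikov, and to close it using the comparison principle of Lemma \ref{cc-l1} together with the Leray--Hopf-type bound \eqref{gg10} from Theorem \ref{prep-l2} and the already-established global linear growth \eqref{gg1}. First I would derive an energy identity localized to $\Omega^t$: testing the weak formulation \eqref{weaksolutionsh} with a cut-off of $\we_k$ that equals $\we_k$ on $\Omega^t$ and is supported in $\Omega^{t+1}$ (or, more cleanly, testing on the truncated domain $\mathcal{B}_k\cap\Omega^t$ whose extra boundary is a cross-section $\sigma_j^t$ on which $\we_k$ need not vanish), one gets an identity of the schematic form
\begin{equation*}
D_k(t)=-\int_{\Omega^t}(\we_k\cdot\nabla)\mathbf{U}\cdot\we_k+\int_{\Omega^t}\mathbf{g}\cdot\we_k+\text{(boundary flux terms on }\textstyle\bigcup_j\sigma_j^t\text{)}.
\end{equation*}
The convective self-term $\int(\we_k\cdot\nabla)\we_k\cdot\we_k$ vanishes on the full domain but on $\Omega^t$ leaves a cross-section term $\sim\int_{\sigma_j^t}|\we_k|^2(\we_k\cdot\ee_1^j)$, and the pressure leaves $\int_{\sigma_j^t}p_k(\we_k\cdot\ee_1^j)$; these, together with $\int_{\sigma_j^t}\partial_{x_j}\we_k\cdot\we_k\sim D_k'(t)$, are the boundary contributions.

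Next I would estimate each piece. The force term is $\eps_k D_k(t)$-type small by \eqref{uef}, \eqref{uboundz2} and Poincaré, exactly as in \eqref{gg4}; the term $\int_{\Omega^t}(\we_k\cdot\nabla)\mathbf{U}\cdot\we_k$ is controlled by $\frac14 D_k(t)$ for $t\in[t_*,t_k]$ thanks to \eqref{gg10} (and on the remaining compact piece $\Omega^{t_*}$ one uses \eqref{gg11} plus \eqref{HI}); this is the crucial input replacing the classical Hopf inequality. For the cross-section flux terms one uses the standard Ladyzhenskaya--Solonnikov trick: since $\sigma_j^t$ has length $h_j$ and $\we_k$ vanishes on the walls, the trace norms $\|\we_k\|_{L^p(\sigma_j^t)}$ are controlled by $\|\nabla\we_k\|_{L^2(\sigma_j^t)}^{1/2}\|\we_k\|_{L^2(\sigma_j^t)}^{1/2}$ (one-dimensional Gagliardo--Nirenberg on the segment), and after integrating in $x_j$ over a unit slab and using that $D_k'(t)\ge\int_{\sigma_j^t}|\nabla\we_k|^2$, these are absorbed into a term of the form $\Psi(D_k'(t))$ for a suitable nondecreasing $\Psi$ (polynomial, coming from the cubic convective term). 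For the pressure flux term one subtracts the mean of $p_k$ over $\sigma_j^t$ — legitimate because $\int_{\sigma_j^t}\we_k\cdot\ee_1^j=0$ by \eqref{NSE-wk}$_3$ — and then bounds $p_k-\bar p_k$ on the slab via the Bogovskii/Nečas inequality in terms of $\|\nabla p_k\|$, which the momentum equation ties back to $\|\nabla\we_k\|$ and lower-order terms, again feeding into $\Psi(D_k'(t))$.

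Assembling, one arrives at a differential inequality $D_k(t)\le\Psi\bigl(D_k'(t)\bigr)+\tfrac12\varphi(t)$ on $[t_*,t_k]$ for an appropriate majorant $\varphi$. The natural choice is $\varphi(t)=c_0 t+c_1$ with $c_0,c_1$ chosen large (depending only on $\Omega,\fe,\ae,\{\Fe_j\}$, not on $k$) so that $\varphi$ satisfies the companion inequality $\varphi(t)\ge 2\Psi(\varphi'(t))=2\Psi(c_0)$ and so that the endpoint condition $D_k(t_k)\le\varphi(t_k)$ holds — and here is precisely where the global linear estimate \eqref{gg1}, $D_k(t_k)\le J_k^2\le c_0 t_k$, is used, after possibly enlarging $c_0$. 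Lemma \ref{cc-l1} (applied with $h=D_k$, $t_0=t_*$, $T=t_k$) then yields $D_k(t)\le c_0 t+c_1$ for all $t\in[t_*,t_k]$; on the compact range $t\in[0,t_*]$ one gets the bound directly from \eqref{gg11} and standard local energy estimates, and for $t\ge t_k$ the domain $\mathcal{B}_k$ has already been exhausted so $D_k(t)=J_k^2\le c_0 t_k\le c_0 t+c_1$. The main obstacle, as usual in this circle of ideas, is the careful bookkeeping of the cross-section boundary terms — getting the pressure term and the cubic convective term into the exact form $\Psi(D_k'(t))$ with a $k$-independent $\Psi$, and making sure all constants entering $\Psi$ and $\varphi$ depend only on the data and not on $k$ or $t$; the conceptual content, i.e.\ the replacement of Hopf's inequality, has already been isolated in Theorem \ref{prep-l2}.
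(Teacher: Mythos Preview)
Your overall strategy is right and matches the paper's, but there is a genuine gap in the assembly. You propose to apply Lemma~\ref{cc-l1} with $h=D_k$, i.e., to reach a pointwise-in-$t$ inequality $D_k(t)\le\Psi\bigl(D_k'(t)\bigr)+\tfrac12\varphi(t)$. The obstruction is the pressure flux $\int_{\sigma^t}p_k\,\we_k\cdot\n$: after subtracting a constant (legitimate by the zero-flux condition), you still need to control $\|p_k-\bar p_k\|$ \emph{on the one-dimensional section} $\sigma^t$, and this is not available from $D_k'(t)=\int_{\sigma^t}|\nabla\we_k|^2$. The Ne\v{c}as/Bogovski\u{\i}-type pressure estimate you invoke is a \emph{two-dimensional} slab estimate; through the momentum equation it ties the pressure to $\nabla\we_k$ on a slab, not on a section. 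You even mention ``integrating in $x_j$ over a unit slab'' but then still feed the result into $\Psi(D_k'(t))$ --- that is the inconsistency.

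The paper's fix is precisely to carry the slab averaging through to the comparison lemma: set $h_k(t)\doteq\int_{t-1}^t D_k(\tau)\,d\tau$ and integrate the localized energy identity over $[t-1,t]$. The cross-section boundary terms then become integrals over the slab $\Omega^{t-1,t}$, and the standard Ladyzhenskaya--Solonnikov estimate (pressure included) bounds them by $h_k'(t)=D_k(t)-D_k(t-1)=\int_{\Omega^{t-1,t}}|\nabla\we_k|^2$, yielding $h_k(t)\le c_2\bigl[G(t)+h_k'(t)+h_k'(t)^{3/2}\bigr]$. Lemma~\ref{cc-l1} is then applied with $h=h_k$ (not $D_k$) and $\varphi(t)=c_3t+c_4$, the endpoint condition $h_k(t_k)\le\varphi(t_k)$ coming from~\eqref{gg1}; the conclusion for $D_k$ follows from $D_k(t-1)\le h_k(t)$. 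The averaging step is not cosmetic --- it is what makes the pressure term tractable.
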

\begin{proof} 
We assume without loss of generality that, for all $k\in\N$, the inclusion $\Omega^{t_*} \subset  \mathcal{B}_k$ holds and (\ref{gg11}) is fulfilled. In view of \eqref{gg1}, it is sufficient to prove \eqref{ug1}
for $t \le t_k$, where $t_k$ is from \eqref{ug7} (recall, that $\we_k$ is extended by zero outside of~$\mathcal{B}_k$). Testing (\ref{NSE-wk}) with $\we_k$ in
$\Omega^{t}$ and by means of an integration by parts (recall $\we_k|_{\partial  \mathcal{B}_k}\equiv\0$), we deduce that
	\begin{equation}\label{ug9}D_k(t)=\int\limits_{\OO^t}\mathbf{g}\cdot\we_k-\int\limits_{\OO^t}(\mathbf{w}_k \cdot \nabla) \mathbf{U}\cdot\we_k+\frac12\int\limits_{\sigma^t}\biggl( \nabla |\we_k|^2-|\we_k|^2\ue_k-2\,p_k\we_k \biggr)\cdot\n,\qquad \forall  t \in [3, t_k],
	\end{equation}
	where
	\begin{equation}\label{ug10}\sigma^t=\bigcup\limits_{j=1}^J\sigma^t_j.
	\end{equation}
	Applying (\ref{gg10}), we obtain
	\begin{equation}\label{ug11}
		\frac34D_k(t)\le\int\limits_{\OO^t}\mathbf{g}\cdot\we_k+\frac12\int\limits_{\sigma^t}\biggl(\nabla |\we_k|^2-|\we_k|^2\ue_k-2\,p_k\we_k\biggr)\cdot\n,\qquad \forall t \in [t_*, t_k].
	\end{equation}
	By the geometry of $\Omega^t$ and Poincar\'e's inequality, there holds
	\begin{equation}\label{ug-est-s1}\int\limits_{\OO^t}|\we_k|^2\le \cc\, D_k(t),\qquad\forall t\ge 0,\end{equation}
	where $\cc>0$ is some constant (independent of~$k$ and $t$). By H\"older's inequality and \eqref{ug11}, we get
	\begin{equation}\label{ug-est-s2}
		\int\limits_{\OO^t}\mathbf{g}\cdot\we_k\le \biggl(\tilde c\,D_k(t)\,G(t)\biggr)^\frac12\le \frac38 D_k(t)+\frac23\cc\,G(t),\end{equation}
	where $G(t)\doteq\int\limits_{\OO^t}|\mathbf{g}|^2$.
	Therefore, from \eqref{ug11} and \eqref{ug-est-s2} we deduce that
	\begin{equation}\label{ug12}D_k(t)\le\frac{16 \cc}9 \,G(t)+\frac43\int\limits_{\sigma^t}\biggl[\nabla |\we_k|^2-|\we_k|^2\ue_k-2\,p_k\we_k\biggr]\cdot\n, \qquad \forall t \in [t_*, t_k].
	\end{equation}
	If we put
\begin{equation}\label{ug13}
h_k(t)\doteq \int\limits_{t-1}^t D_k(\tau)\,d\tau,\qquad\Omega^{t-1,t}=\Omega^t\setminus\overline{\Omega^{t-1}},
\end{equation}
then by integrating (\ref{ug12}) over $[t-1, t]$, we find
	\begin{equation}\label{ug15}
		h_k(t)\le\frac{16 \cc}9 \,G(t)+\frac43\int\limits_{\Omega^{t-1,t}}\biggl(\nabla |\we_k|^2-|\we_k|^2\ue_k-2\,p_k\we_k\biggr)\cdot\n, \qquad \forall t \in [t_*+1, t_k].
	\end{equation}
	By H\"older's inequality, \eqref{ug-est-s1} and standard estimates for the pressure,
	\begin{equation}\label{ug16}
		\biggl|\int\limits_{\Omega^{t-1,t}}\biggl(\nabla |\we_k|^2-|\we_k|^2\ue_k-2\,p_k\we_k\biggr)\cdot\n\biggr|\le c_* \biggl[\  \int\limits_{\Omega^{t-1,t}}|\nabla\we_k|^2+\biggl(\int\limits_{\Omega^{t-1,t}}|\nabla\we_k|^2\biggr)^\frac32\biggr]
	\end{equation}
	with some constant~$c_*>0$ (see, e.g., \cite[proof of Theorem~2.2]{ladyzhenskaya1983determination} for details). We emphasize that the zero flux of $\we_k$ through the channels (i.e., $\int_{\sigma_j^t} \we_k \cdot \mathbf{n} \, ds = 0$)  plays a key role in the proof of the last inequality: indeed, it implies that the left hand side of~(\ref{ug16}) does not change if we add any constant to the pressure, in other words, the left hand side depends on  the pressure only through its derivatives.
	
Next, notice that (\ref{ug13}) implies the identity
	\begin{equation}\label{ug19}h_k'(t)=D_k(t)-D_k(t-1)=\int\limits_{\Omega^{t-1,t}}|\nabla\we_k|^2.
	\end{equation}
	Hence, from (\ref{ug15})--(\ref{ug19}) we get the differential inequality
	\begin{equation}\label{ug20}
		h_k(t)\le\frac{16 \cc}9 \,G(t)+\frac43c_*\,\biggl[h_k'(t)+h_k'(t)^\frac32\biggr]\le c_2\biggl[G(t)+h_k'(t)+h_k'(t)^\frac32\biggr],\qquad \forall t \in [t_* + 1, t_k],\end{equation}
	where we denote
	$c_2=\max\bigl\{\frac{16}9 \cc, \frac43c_*\bigr\}$.
Define $\varphi(t)\doteq c_3\,t +c_4$, where the positive constants $c_3$ and $c_4$ are chosen such that
\begin{equation}\label{ug21}
h_k(t_k)\le\vp(t_k),\qquad 2c_2 \bigl(c_3+c_3^\frac32\bigr)\le\vp(t_*+1),\qquad 2c_2\,G(t)\le \vp(t),\qquad\forall t\in[t_*+1,t_k].\end{equation}
Note that the first requirement in (\ref{ug21}) can be achieved by the global bound~(\ref{gg1}) and the inequality $h_k(t)\le D_k(t)$; the second requirement in (\ref{ug21}) can be achieved if we enlarge $c_4$ for the fixed~$c_3$, and the third requirement in (\ref{ug21}) can be achieved by the linear growth of $G(t)$. Finally, we put
	$$\Psi(\tau)\doteq c_2\bigl(\tau+\tau^\frac32\bigr).$$
	By construction, all the assumptions \eqref{ug2}--\eqref{ug4}  are fulfilled with $T=t_k$. Therefore, by~(\ref{ug5}),
	$$D_k(t-1)\le h_k(t)\le c_3\,t+c_4,\qquad\forall t\in[t_*+1, t_k].$$
The last assertion implies the desired estimate~(\ref{ug1}) with suitably chosen constants~$c_0$ and $c_1$.\end{proof}

Next, using the same type of argument we obtain a~similar 'backward' growth estimate.

\begin{lemma} \label{lem:62}
	Under the assumption \eqref{unbw}, there holds that
	\begin{equation}\label{ug1--}
		\int\limits_{ \mathcal{B}_k \setminus \Omega^t}|\nabla\we_k|^2\le c_0 (t_k-t)+c_1, \qquad \forall t \in [0, t_k],
	\end{equation}
for some $c_0, c_1>0$ independent of~$k$ and $t$.
\end{lemma}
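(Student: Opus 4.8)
The plan is to run the argument of Lemma~\ref{lem:61} \textquotedblleft from the far ends\textquotedblright\ of the outlets: test the system \eqref{NSE-wk} with $\we_k$ on the region $\mathcal{B}_k\setminus\Omega^t$ instead of on $\Omega^t$. The essential point, which actually makes this case \emph{simpler} than Lemma~\ref{lem:61}, is that for $t\ge 3$ the set $\mathcal{B}_k\setminus\Omega^t=\bigcup_{j}(\mathcal{B}_k\cap\Omega_j^{t,\infty})$ lies entirely inside the rectangular outlets, where $\mathbf{U}$ is translation invariant; consequently the uniform Leray--Hopf inequality \eqref{HI} applies directly (with $a=t$, $b=\infty$, after extending $\we_k$ by zero across the curved caps $S_{j,k}$, which is legitimate since $\we_k$ vanishes on $\partial\mathcal{B}_k$), and there is no need for the refined estimates of Theorem~\ref{prep-l2} that were required near the core $\Omega^2$.

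Concretely, I would first integrate by parts --- using $\we_k|_{\partial\mathcal{B}_k}\equiv\0$ and the vanishing of $\we_k$ on the channel walls $\Gamma$ --- to obtain, for $E_k(t)\doteq\int_{\mathcal{B}_k\setminus\Omega^t}|\nabla\we_k|^2$, an identity of the type \eqref{ug9} whose boundary terms live only on $\sigma^t$ (with the opposite orientation of $\mathbf{n}$). Absorbing the convective term $\int_{\mathcal{B}_k\setminus\Omega^t}(\we_k\cdot\nabla)\mathbf{U}\cdot\we_k$ by \eqref{HI}, bounding $\int_{\mathcal{B}_k\setminus\Omega^t}\mathbf{g}\cdot\we_k$ by H\"older's and Poincar\'e's inequalities (the Poincar\'e constant on $\mathcal{B}_k\setminus\Omega^t$ is uniform in $k,t$ because every vertical slice of a capped strip has length $\le h_j$ and $\we_k$ vanishes at both of its endpoints), and writing $\widetilde G(t)\doteq\int_{\mathcal{B}_k\setminus\Omega^t}|\mathbf{g}|^2$, one reaches a bound of the form
\begin{equation*}
E_k(t)\le C\,\widetilde G(t)+C\biggl|\int_{\sigma^t}\bigl(\nabla|\we_k|^2-|\we_k|^2\ue_k-2\,p_k\we_k\bigr)\cdot\mathbf{n}\biggr|,\qquad\forall t\in[3,t_k].
\end{equation*}
Integrating over $[t,t+1]$, setting $\widetilde h_k(t)\doteq\int_t^{t+1}E_k(\tau)\,d\tau$ (so that $-\widetilde h_k'(t)=\int_{\Omega^{t,t+1}}|\nabla\we_k|^2\ge0$), and estimating the resulting boundary integral over $\Omega^{t,t+1}$ exactly as in \eqref{ug16} --- where, once more, the zero flux $\int_{\sigma_j^\tau}\we_k\cdot\mathbf{n}=0$ makes the additive pressure constant irrelevant --- yields the differential inequality
\begin{equation*}
\widetilde h_k(t)\le c_2\Bigl[\,\gamma\,(t_k-t)+\gamma+\bigl(-\widetilde h_k'(t)\bigr)+\bigl(-\widetilde h_k'(t)\bigr)^{3/2}\,\Bigr],\qquad\forall t\in[3,t_k-1],
\end{equation*}
the affine term in $t_k-t$ coming from the linear growth of $\int_t^{t+1}\widetilde G$ (a consequence of \eqref{uef} and \eqref{uboundz2}).

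To conclude I would pass to the reversed variable $s\doteq t_k-t$ and set $H_k(s)\doteq\widetilde h_k(t_k-s)$, which is smooth, nonnegative and \emph{nondecreasing} on $[1,t_k-3]$ with $H_k'(s)=-\widetilde h_k'(t_k-s)\ge0$; the last display then reads $H_k(s)\le\Psi(H_k'(s))+\tfrac12\varphi(s)$ with $\Psi(\tau)\doteq c_2(\tau+\tau^{3/2})$ and $\varphi(s)\doteq c_3 s+c_4$. Choosing $c_3,c_4$ large enough makes $\varphi(s)\ge 2\Psi(\varphi'(s))$ hold (enlarge $c_4$ for fixed $c_3$) and $2c_2\gamma(s+1)\le\varphi(s)$, while the endpoint datum $H_k(t_k-3)=\widetilde h_k(3)\le E_k(3)\le E_k(0)\le J_k^2\le c_0 t_k\le\varphi(t_k-3)$ is supplied by the global estimate \eqref{gg1} (just as \eqref{ug21} uses it in Lemma~\ref{lem:61}). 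Lemma~\ref{cc-l1} then gives $H_k(s)\le\varphi(s)$ on $[1,t_k-3]$, i.e. $E_k(t+1)\le\widetilde h_k(t)\le c_3(t_k-t)+c_4$ for $t\in[3,t_k-1]$; reindexing and handling the remaining range $t\in[0,4]$ directly through \eqref{gg1} ($E_k(t)\le J_k^2\le c_0 t_k\le c_0(t_k-t)+4c_0$) yields \eqref{ug1--} with constants $c_0,c_1$ independent of $k$.

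All computations are copies of those in Lemma~\ref{lem:61} read in reverse, so no genuinely new difficulty arises; the points that require care are (i) the correct orientation of $\mathbf{n}$ on the cross-sections $\sigma^t$, (ii) the uniformity in $k$ and $t$ of the Poincar\'e constant on $\mathcal{B}_k\setminus\Omega^t$, which holds thanks to the vanishing of $\we_k$ on the long channel walls, (iii) the boundary-term estimate \eqref{ug16}, whose validity rests on the zero-flux property $\int_{\sigma_j^t}\we_k\cdot\mathbf{n}=0$, and (iv) casting the differential inequality into the monotone form demanded by Lemma~\ref{cc-l1} via $s=t_k-t$, with the endpoint condition now imposed at $t\approx 3$ and verified through \eqref{gg1}. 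I expect (iv), specifically getting the constants in $\varphi$ and the endpoint inequality mutually consistent for all $k$ with $t_k$ large, to be the only mildly delicate bookkeeping step.
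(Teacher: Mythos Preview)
Your proposal is correct and follows essentially the same approach as the paper's proof: test \eqref{NSE-wk} with $\we_k$ on $\mathcal{B}_k\setminus\Omega^t$, use \eqref{HI} in place of Theorem~\ref{prep-l2} (since this region lies entirely in the outlets), and rerun the argument of Lemma~\ref{lem:61} with $t_k-t$ playing the role of $t$. The paper's proof is only a three-line sketch stating exactly this, so your write-up simply fills in the details the paper omits.
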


\begin{proof}
	Testing the system~(\ref{NSE-wk}) by $\we_k$ in the domain $ \mathcal{B}_k \setminus \Omega^t$ and using standard integration by parts, we get
	\begin{equation}\label{ug9-}
		\int\limits_{ \mathcal{B}_k \setminus \Omega^t}|\nabla\we_k|^2=\int\limits_{ \mathcal{B}_k \setminus \Omega^t}\mathbf{g}\cdot\we_k-\int\limits_{ \mathcal{B}_k \setminus \Omega^t}(\mathbf{w}_k \cdot \nabla) \mathbf{U}\cdot\we_k+\frac12\int\limits_{\sigma^t}\biggl(\nabla |\we_k|^2-|\we_k|^2\ue_k-2\,p_k\we_k\biggr)\cdot\n,
	\end{equation}
	where $\mathbf{n}$ is now the outward normal vector with respect the domain $ \mathcal{B}_k \setminus \Omega^t$. Repeating the arguments from Lemma \ref{lem:61} (with $t_k-t$ playing the role of $t$ there)  and using \eqref{HI} instead of Theorem~\ref{prep-l2}, we get~(\ref{ug1--}).
\end{proof}


\subsection{The  uniform local estimates  of the Dirichlet integral}
\label{poet7}

The aim of this section is to prove the following uniform estimate.

\begin{lemma}
Under the assumption \eqref{unbw} there exists $c_5>0$, independent of~$\tau$ and $k$, such that
\begin{equation}\label{lg1}
\int\limits_{\Omega^{\tau-1, \tau}}|\nabla\we_k|^2\le c_5, \qquad \forall \tau \in [1,t_k],
\end{equation}
\end{lemma}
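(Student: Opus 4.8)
The plan is to combine the two one-sided linear growth estimates with an intermediate step that controls the Dirichlet integral on a unit slice by the derivatives of the corresponding growth functions. Specifically, recall that Lemma~\ref{lem:61} gives $D_k(t) = \int_{\Omega^t}|\nabla\we_k|^2 \le c_0 t + c_1$ while Lemma~\ref{lem:62} gives $\int_{\mathcal{B}_k\setminus\Omega^t}|\nabla\we_k|^2 \le c_0(t_k - t) + c_1$. Adding these two bounds at the same value of $t$ yields $J_k^2 = \int_{\mathcal{B}_k}|\nabla\we_k|^2 \le c_0 t_k + 2c_1$, which is consistent with the global estimate~\eqref{gg1} but by itself does not localize. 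The key observation is that, for a fixed $\tau \in [1, t_k]$, the quantity $\int_{\Omega^{\tau-1,\tau}}|\nabla\we_k|^2$ equals $D_k(\tau) - D_k(\tau - 1)$, i.e., the increment of the nondecreasing function $D_k$ over a unit interval; to bound such an increment uniformly, one needs a differential inequality of the type already derived in the proof of Lemma~\ref{lem:61}.

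First I would revisit the differential inequality~\eqref{ug20}, namely $h_k(t) \le c_2[G(t) + h_k'(t) + h_k'(t)^{3/2}]$ where $h_k(t) = \int_{t-1}^t D_k(\tau)\,d\tau$ and $h_k'(t) = \int_{\Omega^{t-1,t}}|\nabla\we_k|^2$. Since the left-hand side $h_k(t)$ is now known to be bounded by $c_0 t + c_1 + c_0$ (integrating~\eqref{ug1}), and since $G(t)$ grows at most linearly by assumption~\eqref{uef}, this would at first sight only give a linearly growing bound for $h_k'(t)$. The fix is to run the same Ladyzhenskaya--Solonnikov comparison argument \emph{backwards} as well, or more precisely to exploit both Lemma~\ref{lem:61} and Lemma~\ref{lem:62} together. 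I would introduce a function measuring the Dirichlet integral on a window around $\tau$ that stays bounded from both sides: the point is that near $\tau$ both $D_k(\tau)$ and $\int_{\mathcal{B}_k\setminus\Omega^\tau}|\nabla\we_k|^2$ are controlled, and the slice integral is at most $\min\{D_k(\tau), \text{const}\}$ minus a comparable quantity. Concretely, one writes a local energy identity on $\Omega^{\tau-1,\tau}$ (test~\eqref{NSE-wk} with $\we_k$ on this annular region), use~\eqref{gg10} to absorb the $(\we_k\cdot\nabla)\mathbf{U}\cdot\we_k$ term into $\frac14\int_{\Omega^{\tau-1,\tau}}|\nabla\we_k|^2$ plus lower-order pieces, and estimate the boundary terms over $\sigma^{\tau-1}\cup\sigma^\tau$ using the pressure estimates and Hölder, exactly as in~\eqref{ug16}.

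The decisive step, and the one I expect to be the main obstacle, is closing the argument so that the slice integral is bounded by an \emph{absolute} constant rather than a slowly growing one. The way to do this is a second comparison-principle argument in the spirit of Lemma~\ref{cc-l1}: define the tail quantities and show that the increments $h_k'(t)$ cannot stay large over a long stretch without forcing $h_k$ (or the complementary tail) to exceed its known linear bound. In more detail, from the local energy identity one obtains an inequality of the form $\int_{\Omega^{\tau-1,\tau}}|\nabla\we_k|^2 \le c[G(\tau) - G(\tau-1)] + c\,[\text{boundary flux at }\tau-1] + c\,[\text{boundary flux at }\tau]$, and the boundary flux terms at level $s$ are themselves dominated by $\int_{\Omega^{s-1,s+1}}|\nabla\we_k|^2$ (with the $3/2$ power when the Dirichlet energy there is large). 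Setting $y_k(\tau) = \int_{\Omega^{\tau-1,\tau}}|\nabla\we_k|^2$, this becomes a recursive inequality $y_k(\tau) \lesssim [\text{increment of }G] + y_k(\tau-1) + y_k(\tau) + y_k(\tau+1) + (\cdots)^{3/2}$, and the uniform linear bounds from Lemmas~\ref{lem:61}--\ref{lem:62} prevent the $3/2$-superlinear feedback from blowing up; a discrete Gronwall-type or again a Ladyzhenskaya--Solonnikov comparison argument then yields $y_k(\tau) \le c_5$ uniformly in $\tau$ and $k$. I would model the bookkeeping closely on \cite[Theorem~2.2 and the proof of Theorem~3.1]{ladyzhenskaya1983determination}, with the only new ingredient being that~\eqref{gg10} from Theorem~\ref{prep-l2} replaces the classical Leray--Hopf inequality used there; the linear growth of $G(t)$ guaranteed by~\eqref{uef} is exactly what makes the increments $G(\tau)-G(\tau-1)$ uniformly bounded, which is needed for the constant $c_5$ to be independent of $\tau$.
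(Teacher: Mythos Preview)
Your proposal identifies the right ingredients (the local energy identity on $\Omega^{\tau-1,\tau}$, the boundary flux terms, the use of both Lemmas~\ref{lem:61} and~\ref{lem:62}, and the comparison-lemma machinery), but the closing mechanism you describe has a genuine gap. The recursive inequality you write, $y_k(\tau) \lesssim [\text{increment of }G] + y_k(\tau-1) + y_k(\tau) + y_k(\tau+1) + (\cdots)^{3/2}$, does not close: first, the pointwise boundary flux $I_s$ over $\sigma^s$ is \emph{not} dominated by a neighboring slice integral---only its integral in $s$ over a unit interval is (this is exactly the content of~\eqref{ug16}); second, even granting such a bound, a two-sided recursion of this shape with a superlinear term is not controlled by a discrete Gronwall argument, and the linear bounds on $D_k$ and its complement give no direct upper bound on the increment $h_k'(t)$ (the differential inequality~\eqref{ug20} is a \emph{lower} bound for $h_k'$ in terms of $h_k$).

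What the paper does instead is a sign-splitting trick that you are missing. Testing~\eqref{NSE-wk} on $\Omega^{\tau-1,\tau}$ gives an identity with boundary terms $I_\tau - I_{\tau-1}$. One then distinguishes cases on the signs: if $I_{\tau-1}\ge 0$, drop $-I_{\tau-1}\le 0$ and run the comparison argument of Lemma~\ref{lem:61} on the \emph{fresh} growth function $\tilde D_k(t)=\int_{\Omega^{\tau-1,\,\tau-1+t}}|\nabla\we_k|^2$, with terminal value supplied by the backward bound~\eqref{ug1--}; if $I_\tau\le 0$, drop $I_\tau$ and run the argument backward from $\tau$, with terminal value from~\eqref{ug1}; if neither holds, there is an intermediate level $\tau_*\in(\tau-1,\tau)$ with $I_{\tau_*}=0$, and one combines both directions. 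In each case the comparison lemma yields $\tilde D_k(t)\le c_0 t+c_1$ on the relevant interval, and evaluating at $t=1$ gives the uniform slice bound. The decisive idea is this restart of the ODE-comparison from the point where a boundary term has favorable sign; without it, the argument does not localize.
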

\begin{proof}
If $\tau$ is near $1$ (say, $\tau \le10$), then \eqref{lg1} follows from (\ref{ug1}). If $\tau$ is near $t_k$ (say, $\tau\ge t_k-10$), then
\eqref{lg1} follows from~(\ref{ug1--}).\par
The delicate situation is when $10<\tau<t_k-10$. In this case, we test (\ref{NSE-wk}) by $\we_k$ in $\Omega^{\tau-1,\tau}$  to get
\begin{equation}\label{lg3}		\int\limits_{\Omega^{\tau-1,\tau}}|\nabla\we_k|^2=\int\limits_{\Omega^{\tau-1,\tau}}\mathbf{g}\cdot\we_k-\int\limits_{\Omega^{\tau-1,\tau}}(\mathbf{w}_k \cdot \nabla) \mathbf{U}\cdot\we_k+I_\tau-I_{\tau-1},
\end{equation}
where
$$
I_{\tau}\doteq\frac12\int\limits_{\sigma^\tau}\biggl(\nabla |\we_k|^2-|\we_k|^2\ue_k-2\,p_k\we_k\biggr)\cdot\n,
$$
with $\n$ being the outward normal vector with respect to~$\Omega^\tau$. We now distinguish three possible cases.\par
{\sc Case I}: $I_{\tau-1}\ge0$. In this case, we put
$$\WO(t) \doteq\Omega^{t+\tau-1}\setminus{\bar\Omega}^{\tau-1}=\Omega^{\tau-1,t+\tau-1}\qquad \forall
t\in [0, t_k-\tau+1],\qquad\WD_k(t)\doteq\int\limits_{\WO(t)}|\nabla\we_k|^2.$$
By an energy estimate similar to \eqref{lg3}, we find
\begin{equation}\label{lg5}
\WD_k(t)=\int\limits_{\WO(t)}\mathbf{g}\cdot\we_k-\int\limits_{\WO(t)}(\mathbf{w}_k \cdot \nabla) \mathbf{U}\cdot\we_k+I_{t+\tau-1}-I_{\tau-1}\le \int\limits_{\WO(t)}\mathbf{g}\cdot\we_k-\int\limits_{\WO(t)}(\mathbf{w}_k \cdot \nabla) \mathbf{U}\cdot\we_k+I_{t+\tau-1}\quad\forall t\in[0,T]
\end{equation}
where $T\doteq t_k-\tau+1$. By (\ref{ug1--}) we then find $\WD_k(T)\le c_0 T+c_1$ and,
repeating the arguments for Lemma~\ref{lem:61}, we get
$$\WD_k(t)\le c_0 t +c_1,\qquad\forall t\in [0, T]$$
and the required estimate~(\ref{lg1}) follows by taking $t=1$ therein.\par
{\sc Case II}: $I_{\tau}\le0$. In this case, we put
$$\WO(t)=\Omega^{\tau}\setminus\bar\Omega^{\tau-t}=\Omega^{\tau-t,\tau}\qquad\forall t\in [0, \tau],
\qquad\WD_k(t)=\int\limits_{\WO(t)}|\nabla\we_k|^2.$$
	By an energy estimate similar to \eqref{lg3}, we have
	\begin{equation}\label{lg5}\WD_k(t)\le \int\limits_{\WO(t)}\mathbf{g}\cdot\we_k-\int\limits_{\WO(t)}(\mathbf{w}_k \cdot \nabla) \mathbf{U}\cdot\we_k-I_{\tau-t}\qquad\forall t\in [0,T]
\end{equation}
where, now, $T=\tau$. By (\ref{ug1}) we have $\WD_k(T)\le c_0 T+c_1$ and, repeating again the arguments for Lemma~\ref{lem:61},
we get
$$\WD_k(t)\le c_0 t +c_1\qquad\forall  t\in [0, T]$$
and the required estimate~(\ref{lg1}) follows by taking $t=1$ therein.\par
	{\sc Case III}: $I_{\tau}>0$ and $I_{\tau-1}<0$. Then  $I_{\tau_*}=0$ for some $\tau_*\in (\tau-1,\tau)$. This case can be covered by a~combination  of the  previous two cases. Consider domains $\WO_+(t)$ and $\WO_-(t)$ of the forms
$$\WO_+(t)=\Omega^{t+\tau_*}\setminus\Omega^{\tau_*}\quad\forall t\in [0, t_k-\tau_*],\qquad
\WO_-(t)=\Omega^{\tau_*}\setminus\Omega^{\tau_*-t}\quad\forall t\in [0, \tau_*].$$
	Similar to Cases I and II, we have the uniform estimates within both $\WO_+(t)$ and $\WO_-(t)$:
	\begin{equation}
		\int_{\WO_-(t)} |\nabla \we_k|^2\le c_0 t+ c_1, \qquad \ \ \int_{\WO_+(t)} |\nabla \we_k|^2 \le c_0 t+ c_1,
	\end{equation}
	for $t$ in the corresponding ranges. Then (\ref{lg1}) follows by summing of these inequalities with~$t= 1$.\par	
Combining the above three cases, the inequality~(\ref{lg1}) is proved completely.\end{proof}

\subsection{Conclusion of the proof of Theorem \ref{th:main}} \label{conclusionproof}

As already mentioned in Subsection \ref{poet6}, without loss of generality we can assume \eqref{unbw}. By the estimates~(\ref{ug1}), we can extract a~subsequence of $\we_k$ which converges weakly to some~$\we\in W^{1,2}_\loc(\overline\Omega)$. Then, $\we$ is a weak solution to~(\ref{NSE-w}).  By the estimates~(\ref{ug1}), (\ref{lg1}), we have
\begin{equation}  \label{NSE-les}
	\|\nabla\we\|_{L^2(\Omega\cap B(z_0))}\le C_*.
\end{equation}
for any unit disk $B(z_0)$ centered at any point $z_0\in\R^2$, where $C_*$ is some constant independent of~$z_0$.
Then, from the homogeneous boundary conditions~$\we|_{\partial\Omega}\equiv\0$ and from the regularity theory for the Stokes system (see, e.g., \cite[Section~IV.6]{galdi2011introduction} or \cite[Section~2.4.2]{korobkov2024steady}),
we infer the required uniform estimates \eqref{NSE-est} for $\ue=\we+\mathbf{U}$ and for the corresponding pressure~$p$.

\section{Proof of Theorem \ref{th:main2}}
\label{poet8}

Let the assumptions of Theorem~\ref{th:main2} be fulfilled with some small parameter~$\delta_*>0$ whose smallness will be specified below.
Let $\ue$ be a solution from Theorem~\ref{th:main}, i.e., $\ue$ is a~solution to the system~(\ref{NSE}) satisfying the estimate~(\ref{NSE-est}).
Further we will use some constructions and identities from the proof of Theorem~\ref{th:main} with the following modifications (simplifications).
First of all, in the construction of the flux carrier in Subsection~\ref{sec:2.1} and in Appendix~\ref{appendix}
we now simply set $\UU = \mathbf{CP}_j$ in each outlet $\Omega_j^{2, \infty}$ (inside $\Omega_0$ we still take $\UU = \mathbf{V}$ with $\mathbf{V}$ solving~(\ref{ttO})\,):
indeed, if $\delta_*$ is small enough, then we have  for any $j \in \{1,2,\cdots, J\}$ and any $2 \le a<b$,
\begin{equation}\label{HI-222}
	\left| \int_{ \Omega_j^{a,b}} (\bbe\cdot \nabla)\mathbf{U} \cdot \bbe \right|  \leq \varepsilon \| \nabla\bbe \|_{L^{2}( \Omega_j^{a,b})}^2,
\end{equation}
if $\bbe$ vanishes on $\bigl(\partial \Omega\bigr) \cap \partial \Omega_j^{a,b}$.
Moreover, we now have
$\Delta \mathbf{U} - (\mathbf{U} \cdot \nabla) \mathbf{U}\equiv\const$
on every outlet.
Hence, we deduce that $\we\doteq\ue-\UU$ is a solution to the system~(\ref{NSE-w}) with the right-hand side given by
\begin{equation}  \label{uni-8}
	\mathbf{g}(z)\equiv \mathbf{f}(z)\qquad\forall z\in\Omega\setminus\Omega^2
\end{equation}
because the corresponding constant terms in channels can be absorbed into the gradient of the pressure.
By contradiction, suppose that the claim of Theorem~\ref{th:main2} is false, that is,
$$
\int\limits_{\Omega}|\nabla\we|^2= +\infty.
$$
Then, if we put $D(t)\doteq\int\limits_{\Omega^t}|\nabla\we|^2$, we have
\begin{equation}  \label{uni-10}
	D(t)\to+\infty\qquad\mbox{ as \ }t\to+\infty.
\end{equation}
Take $t_0>0$ such that
\begin{equation}\label{uni-gg11}\biggl|\int\limits_{\Omega^2}(\mathbf{w} \cdot \nabla) \mathbf{U}\cdot\we\biggr|\le
\frac18\int\limits_{\Omega^{2,t_0-1}}|\nabla\we|^2,\qquad 4 \cc\, G_\infty\le D(t_0-1),
\end{equation}
where $\cc$ is the~constant from~(\ref{ug-est-s1}) and $G_\infty\doteq\int\limits_\Omega|\mathbf{g}|^2$.
From (\ref{uni-gg11}) and (\ref{HI}) we get
\begin{equation}\label{uni-gg13}\biggl|\int\limits_{\Omega^t}(\mathbf{w} \cdot \nabla) \mathbf{U}\cdot\we\biggr|\le
	\frac14\int\limits_{\Omega^{t}}|\nabla\we|^2\qquad \forall t\ge t_0.
\end{equation}
Then, deducing formula~(\ref{ug12}) for $\we$, we have
\begin{equation}\label{uni-ug12}D(t)\le\frac{16}9 \cc\,G(t)+\frac43\int\limits_{\sigma^t} \left(\nabla |\we|^2- |\we|^2\ue-2p\we \right)
\cdot\n\overset{(\ref{uni-gg11})}<
	\frac12 D(t)+\frac43\int\limits_{\sigma^t}\left(\nabla |\we|^2- |\we|^2\ue-2p\we \right)\cdot\n,\end{equation}
therefore,
\begin{equation}\label{uni-ug13}D(t)\le\frac83\int\limits_{\sigma^t}\left(\nabla |\we|^2- |\we|^2\ue-2p\we \right)\cdot\n\qquad \forall t\ge t_0.\end{equation}
Then, formula~(\ref{ug20}) takes the form
\begin{equation}\label{uni-ug20first}
h(t)\le c_2\biggl[h'(t)+h'(t)^\frac32\biggr]\qquad \forall t\ge t_0,
\end{equation}
where, as before, $h(t)\doteq
\int\limits_{t-1}^t D(\tau)\,d\tau$ and $c_2>0$ is some constant.
But \eqref{uni-10} and \eqref{uni-ug20first} imply $h'(t)\to+\infty$ as $t\to+\infty$, and this contradicts the~linear estimate~(\ref{NSE-est}). Hence,
$$
\int_{\Omega}|\nabla\we|^2 < +\infty.
$$
meaning that \eqref{uni-2} holds. From the estimates for the solutions to the Stokes system (see, e.g., \cite[Section~IV.6]{galdi2011introduction} or
\cite[Section~2.4.2]{korobkov2024steady}\,) we obtain also that
$$
\lim\limits_{t \to +\infty} \| \ue-\textbf{CP}_{j} \|_{W^{2,2}(\Omega^{t,t+1}_{j})} = 0 \qquad \forall j \in \{1,\dots,J \} \, ,
$$
which, by Sobolev embedding, implies the uniform convergence \eqref{convergencecp}. This concludes the proof.
\hfill$\qed$

\section{Proof of Theorem \ref{th:main3}}
\label{poet8.2}

We are going to deduce Theorem~\ref{th:main3} as a direct consequence of the following two claims:

\begin{proposition}[{\bf Smallness}]\label{th:main4} Let the assumptions of Theorem~\ref{th:main3} be fulfilled. Then for any $\e>0$
there exists $\delta_{\e}=\delta_{\e}(\Omega)>0$ such that if
		\begin{equation}  \label{uni-21}
			\|\fe\|_{L^2(\Omega)}+\|\ae\|_{W^{3/2,2}(\partial\Omega \cap B_{R})}+\max\limits_{j=1,\dots,J}\left( |\be_j^0|
+|\be_j^1|+ |\Fe_j| \right)<\delta_\e ,
		\end{equation}
		then there exists a solution~$\ue$ from Theorem~\ref{th:main} satisfying \eqref{NSE-est} with the parameter $C_*<\e$ and
		\begin{equation}  \label{uni-22}
			\sup\limits_{z\in\Omega}|\ue(z)|<\e.
		\end{equation}
\end{proposition}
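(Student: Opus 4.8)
The plan is to rerun the construction behind Theorem~\ref{th:main} with the Couette--Poiseuille flux carrier employed in Section~\ref{poet8}, keeping track of how every estimate scales with the size $\delta$ of the data (the left-hand side of~\eqref{uni-21}; here and below $C$ denotes a constant depending only on $\Omega$). First I would fix, exactly as in the proof of Theorem~\ref{th:main2}, the flux carrier $\UU\in W^{2,2}_{\loc}(\overline\Omega)$ to equal $\mathbf{CP}_j$ on each outlet $\Omega_j^{2,\infty}$ and the Stokes extension $\mathbf{V}$ on the bounded part $\Omega^2$. If $\delta_\e$ is small enough (in particular $\delta_\e\le\delta_*$, so that~\eqref{HI-222} holds with the constant $\tfrac18$ on its right-hand side), then the explicit formulas~\eqref{CP}--\eqref{CPconstants} together with the Stokes estimates on the \emph{fixed} bounded domain $\Omega^2$ yield the uniform bounds $\|\UU\|_{W^{2,2}(\Omega\cap B(z_0))}+\|\UU\|_{L^\infty(\Omega)}\le C\delta$ for every $z_0\in\R^2$. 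Moreover, since $\mathbf{CP}_j$ solves the Navier--Stokes system in $\Omega_j$, the datum $\mathbf{g}\doteq\fe+\Delta\UU-(\UU\cdot\nabla)\UU$ coincides on each $\Omega_j^{2,\infty}$ with $\fe+\nabla\Pi_j$ (cf.~\eqref{uni-8}), where $|\nabla\Pi_j|\le C\delta$, while $\|\mathbf{g}\|_{L^2(\Omega^2)}\le C\delta$; recall that here $\fe\in L^2(\Omega)$ with $\|\fe\|_{L^2(\Omega)}\le\delta$.

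The crux is a bound $J_k\le C\delta$, \emph{uniform in $k$}, for the invading-domain solutions $\we_k$ of~\eqref{NSE-wk}. Testing~\eqref{weaksolutionsh} with $\bbe=\we_k$ and dropping the two transport terms (which vanish since $\we_k\in W^{1,2}_0(\mathcal{B}_k)$) gives, as in~\eqref{gg3}, $J_k^2=\int_{\mathcal{B}_k}\mathbf{g}\cdot\we_k-\int_{\mathcal{B}_k}(\we_k\cdot\nabla)\UU\cdot\we_k$. For the first integral, an integration by parts shows that the contribution of $\nabla\Pi_j$ over $\mathcal{B}_k\setminus\Omega^2$ vanishes because $\int_{\sigma_j^t}\we_k\cdot\ee_1^j=0$ for every $t$; hence, using the Poincaré inequality on $\Omega^2$ and in the outlet strips (where $\we_k$ vanishes on the walls and obstacles), $\bigl|\int_{\mathcal{B}_k}\mathbf{g}\cdot\we_k\bigr|\le\bigl(\|\mathbf{g}\|_{L^2(\Omega^2)}+\|\fe\|_{L^2(\Omega)}\bigr)\|\we_k\|_{L^2(\mathcal{B}_k)}\le C\delta\,J_k$. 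For the second integral I split $\mathcal{B}_k=\Omega^2\cup(\mathcal{B}_k\setminus\Omega^2)$: on each outlet piece I extend $\we_k$ by zero and apply~\eqref{HI-222}, summing to a bound $\tfrac18 J_k^2$; on $\Omega^2$ the Sobolev embedding $W^{1,2}(\Omega^2)\hookrightarrow L^4(\Omega^2)$ and Poincaré give $\bigl|\int_{\Omega^2}(\we_k\cdot\nabla)\UU\cdot\we_k\bigr|\le\|\nabla\UU\|_{L^2(\Omega^2)}\|\we_k\|_{L^4(\Omega^2)}^2\le C\delta\,J_k^2$. Collecting, $J_k^2\le\tfrac18 J_k^2+C\delta\,J_k^2+C\delta\,J_k$, hence $J_k=\|\nabla\we_k\|_{L^2(\mathcal{B}_k)}\le C\delta$ for all $k$ once $\delta_\e$ is small enough.

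Since $\liminf_{k\to\infty}J_k<+\infty$, we are precisely in Case~(I) of Section~\ref{proofs-sec}: along a subsequence $\we_k\rightharpoonup\we$ weakly in $W^{1,2}_{0,\sigma}(\Omega)$, the limit $\we$ is a weak, hence (by regularity) strong, solution of~\eqref{NSE-w} with $\|\nabla\we\|_{L^2(\Omega)}\le C\delta$, and $\ue\doteq\UU+\we$ is a solution from Theorem~\ref{th:main} satisfying~\eqref{NSE-est}. To turn $\|\nabla\we\|_{L^2(\Omega)}\le C\delta$ into the quantitative bounds, I would feed $\we$ into the local $W^{2,2}$-estimate for the Stokes system on each patch $\Omega\cap B(z_0)$, treating $(\we\cdot\nabla)\we$, $(\UU\cdot\nabla)\we$, $(\we\cdot\nabla)\UU$ and $\mathbf{g}$ as right-hand sides: since $\|\we\|_{W^{1,2}(\Omega)}\le C\delta$, $\|\UU\|_{W^{2,2}(\Omega\cap B(z_0))}+\|\UU\|_{L^\infty(\Omega)}\le C\delta$ and $\|\mathbf{g}\|_{L^2(\Omega\cap B(z_0))}\le C\delta$ uniformly, and since $\sup_{z_0}\|\we\|_{W^{2,2}(\Omega\cap B(z_0))}<\infty$ is already known from~\eqref{NSE-est}, a short absorption argument upgrades this to $\sup_{z_0}\bigl(\|\we\|_{W^{2,2}(\Omega\cap B(z_0))}+\|\nabla p\|_{L^2(\Omega\cap B(z_0))}\bigr)\le C\delta$. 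Adding $\|\UU\|_{W^{2,2}(\Omega\cap B(z_0))}\le C\delta$ yields~\eqref{NSE-est} with $C_*\le C_0\delta$, and the two-dimensional embedding $W^{2,2}(\Omega\cap B(z_0))\hookrightarrow L^\infty(\Omega\cap B(z_0))$ (with a constant uniform in $z_0$, by the $\mathcal{C}^2$ regularity of $\partial\Omega$) gives $\sup_\Omega|\ue|\le C_0\delta$ for some $C_0=C_0(\Omega)$. Choosing $\delta_\e=\delta_\e(\Omega)$ so small that $\delta_\e\le\delta_*$, $C_0\delta_\e<\e$, and all the smallness conditions above hold completes the proof.

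I expect the only genuinely delicate aspect to be the bookkeeping of uniformity: ensuring that no constant depends on the translation parameter $z_0$ or on the invading index $k$. This uniformity is, however, already built into Theorems~\ref{th:main} and~\ref{fluxcarrier}, so here it only has to be propagated; conceptually this is the familiar small-data mechanism — smallness of the data forces the easy Case~(I) with a uniformly small Dirichlet energy, after which local elliptic regularity delivers the pointwise and $W^{2,2}$ bounds.
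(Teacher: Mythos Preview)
Your proof is correct and follows essentially the same approach as the paper's. The paper makes two minor simplifications: it bounds the trilinear term in one stroke via $\|\mathbf{U}\|_{L^\infty}\le C\delta$ (after integrating by parts to $\int_{\mathcal{B}_k}(\mathbf{w}_k\cdot\nabla)\mathbf{w}_k\cdot\mathbf{U}$) rather than splitting and invoking~\eqref{HI-222}, and it upgrades regularity in $W^{2,3/2}$---where $(\mathbf{w}\cdot\nabla)\mathbf{w}\in L^{3/2}$ is controlled directly by $\|\nabla\mathbf{w}\|_{L^2(\Omega)}\le C\delta$---thereby avoiding your absorption step in $W^{2,2}$.
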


\begin{proposition}[{\bf Weak-Strong Uniqueness}]\label{th:main5} Let the assumptions of Theorem~\ref{th:main3} be fulfilled. Then there exists~$\e_0=\e_0(\Omega)>0$ such that if $\ue\in W^{1,2}_\loc(\overline\Omega)$ is a solution to~\eqref{NSE} satisfying
		\begin{equation}  \label{uni-23}
			\esssup\limits_{z\in\Omega}|\ue(z)|<\e_0,\end{equation}
		then for any other
		solution~$\tilde\ue\in W^{1,2}_\loc(\overline{\Omega})$ to the same problem~\eqref{NSE} the estimate from below
		\begin{equation}  \label{uni-25}
			\varliminf\limits_{t\to+\infty}\frac{D(t)}{t^3}>0
		\end{equation}
		holds, where we denoted 
		\begin{equation}  \label{uni-26}
			D(t)\doteq\int\limits_{\Omega^t}|\nabla\we|^2,\qquad \we=\tilde\ue-\ue.
		\end{equation}
\end{proposition}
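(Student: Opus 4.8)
The plan is to run an energy-estimate argument for the difference $\we = \tilde\ue - \ue$, where both $\ue$ and $\tilde\ue$ solve \eqref{NSE}. Since the boundary data, fluxes, and external force coincide, $\we$ vanishes on $\partial\Omega$ and has zero flux through each $\sigma_j^t$, and satisfies the homogeneous-data perturbed system
\begin{equation}\label{plan-diff}
	-\Delta\we + (\we\cdot\nabla)\we + (\ue\cdot\nabla)\we + (\we\cdot\nabla)\ue + \nabla\pi = \0\,,\qquad \nabla\cdot\we = 0 \ \ \mbox{ in } \ \Omega\,,
\end{equation}
for some pressure $\pi$. First I would test \eqref{plan-diff} with $\we$ over the truncated domain $\Omega^t$ and integrate by parts, producing
\begin{equation}\label{plan-energy}
	D(t) = -\int_{\Omega^t}(\we\cdot\nabla)\ue\cdot\we + \frac12\int_{\sigma^t}\Bigl(\nabla|\we|^2 - |\we|^2\,\tilde\ue - 2\pi\we\Bigr)\cdot\n\,,
\end{equation}
exactly in the spirit of \eqref{ug9}; note the convective term $(\we\cdot\nabla)\we\cdot\we$ integrates to a boundary term (which vanishes since $\we|_{\Gamma}=\0$ and the flux through $\sigma^t$ is zero), and $(\ue\cdot\nabla)\we\cdot\we$ likewise reduces to a boundary term. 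The key point is that, by \eqref{uni-23}, the troublesome volume term obeys $\bigl|\int_{\Omega^t}(\we\cdot\nabla)\ue\cdot\we\bigr| \le \e_0 \int_{\Omega^t}|\nabla\we|\,|\we| \le \e_0\,C_P\,D(t)$ via Cauchy--Schwarz and Poincaré, so for $\e_0$ small enough it is absorbed: $D(t) \le 2\bigl|\int_{\sigma^t}(\cdots)\cdot\n\bigr|$ for all large $t$, a reversed version of \eqref{ug12}.

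The second step is to convert this into a differential inequality going the \emph{opposite} direction from Lemma \ref{lem:61}. Integrating over $[t-1,t]$ and writing $h(t) \doteq \int_{t-1}^t D(\tau)\,d\tau$, the boundary term over the slice $\Omega^{t-1,t}$ is controlled (via Hölder, Poincaré \eqref{ug-est-s1}, and the standard pressure estimate, using crucially that zero flux of $\we$ makes the boundary integral depend on $\pi$ only through its derivatives) by $c_*\bigl[h'(t) + h'(t)^{3/2}\bigr]$, giving $h(t) \le c_2\bigl[h'(t) + h'(t)^{3/2}\bigr]$. Provided $D(t)\to+\infty$ — which we may assume, since otherwise $\we$ has finite Dirichlet integral over $\Omega$ and then the vanishing boundary condition plus a Liouville-type argument (or directly the weak-strong uniqueness in the class of finite energy, using again \eqref{uni-23}) would force $\we\equiv\0$, contradicting that $\tilde\ue\ne\ue$ — one has $h'(t)\to+\infty$ as well, so eventually $h'(t)^{3/2}$ dominates and $h(t) \le 2c_2\,h'(t)^{3/2}$, i.e. $h'(t) \ge c\,h(t)^{2/3}$. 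Integrating this ODE inequality yields $h(t) \gtrsim t^3$, and since $D(t) \ge h(t)$ (as $D$ is nondecreasing), we conclude $\varliminf_{t\to\infty} D(t)/t^3 > 0$, which is \eqref{uni-25}.

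The main obstacle I anticipate is not the ODE manipulation but justifying the \emph{reversed absorption} step cleanly: one must ensure that the constant in the Poincaré inequality on the unbounded slabs $\Omega^{t-1,t}$ (and on $\Omega^t$ as it grows) does not degenerate, which is true here because each outlet is a translate of a fixed rectangle so the relevant constants are uniform in $t$; and one must have a pressure estimate for $\pi$ valid uniformly over the slices $\Omega^{t-1,t}$, again available from the translation-invariant geometry and the a priori bound $\sup_{z_0}\|\we\|_{W^{2,2}(\Omega\cap B(z_0))} < \infty$ (which follows for both $\ue$ and $\tilde\ue$ from \eqref{NSE-est}, or from \eqref{uni-23} combined with local Stokes regularity). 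A secondary subtlety is the dichotomy at the start: one should record why finite global Dirichlet energy of $\we$ forces $\we\equiv\0$ under the smallness \eqref{uni-23} — this is the classical weak-strong uniqueness computation, testing \eqref{plan-diff} with $\we$ over all of $\Omega$ and absorbing $\int_\Omega(\we\cdot\nabla)\ue\cdot\we$ by $\e_0$-smallness — so that the hypothesis $\tilde\ue\ne\ue$ genuinely puts us in the regime $D(t)\to+\infty$ where the cubic lower bound is meaningful.
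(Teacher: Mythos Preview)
Your plan is essentially the paper's proof: test the difference system with $\we$ over $\Omega^t$, absorb the volume term via~\eqref{uni-23} and Poincar\'e, pass to $h(t)=\int_{t-1}^t D$, obtain $h\le c_*[h'+h'^{3/2}]$, and integrate to cubic growth.

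One concrete imprecision: your bound $\bigl|\int_{\Omega^t}(\we\cdot\nabla)\ue\cdot\we\bigr|\le\e_0\int_{\Omega^t}|\nabla\we|\,|\we|$ is not valid as written, since the derivative falls on $\ue$ and \eqref{uni-23} controls $|\ue|$, not $|\nabla\ue|$. You must first integrate by parts in the $\we$-direction, which yields $\int_{\Omega^t}(\we\cdot\nabla)\we\cdot\ue$ plus a boundary term $\int_{\sigma^t}(\we\cdot\n)(\we\cdot\ue)$. The paper handles this by writing the energy identity with the volume term already in the form $\int(\we\cdot\nabla)\we\cdot\ue$ and absorbing the extra boundary piece into the pressure-like factor, recording $p+\we\cdot\ue$ rather than $\pi$ alone in the $\sigma^t$-integral. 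A second, smaller point: your dichotomy (finite vs.\ infinite total Dirichlet energy) is unnecessary. The paper argues directly: $\we\not\equiv\0$ gives $h(t)\ge c_0>0$ for $t\ge t_0$, whence the inequality $h\le c_*[h'+h'^{3/2}]$ forces $h'\ge c_1>0$, so $h'\le c_1^{-1/2}h'^{3/2}$ and $h\le\tilde c_*\,h'^{3/2}$; integrating $h'\ge (h/\tilde c_*)^{2/3}$ gives $h\gtrsim t^3$ without first establishing $D(t)\to\infty$.
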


\medskip
{\sc Proof of Proposition~\ref{th:main4}}. Fix $\e\in(0,1)$. Let \eqref{uni-21} be fulfilled for some~$\delta_\e>0$, whose smallness will be specified below. Similar to the proof of Theorem \ref{th:main2}, in the construction
of the flux carrier in Subsection~\ref{sec:2.1} we can put $\mathbf{U} = \mathbf{CP}_j$ in the outlets. Fix $N\in\mathbb{N}$ (whose value will be specified below). From the estimates for the solutions to the Stokes
system (see, e.g., \cite[Section~IV.6]{galdi2011introduction} or
\cite[Section~2.4.2]{korobkov2024steady}\,)  we obtain, that for $\delta_\e$ small enough the function $\fe$ and the new flux carrier $\UU$ satisfy the inequalities
\begin{equation}  \label{uni-27}
\|\fe\|_{L^2(\Omega)}<\frac{\e}{N},\qquad\esssup\limits_{z\in\Omega}|\UU(z)|<\frac{\e}{N},\qquad\|\UU\|_{W^{2,2}(\Omega\cap B(z_0))}<\frac{\e}{N}\qquad\forall z_0\in\R^2.
\end{equation}
Let $\ue_k$ be a solution to~(\ref{NSE-k}) and let $\we_k\doteq\ue_k-\UU$ be a solution to~(\ref{NSE-wk}) with
\begin{equation}  \label{uni-29}
\mathbf{g}(z)\equiv \mathbf{f}(z)\qquad\forall z\in\Omega\setminus\Omega^2; \qquad\qquad \mathbf{g}\equiv\mathbf{f}+\Delta \mathbf{U} - (\mathbf{U} \cdot \nabla) \mathbf{U}\qquad\forall z\in\Omega^2,
\end{equation}
(again, $\Delta \mathbf{U} - (\mathbf{U} \cdot \nabla) \mathbf{U}\equiv\const$ in every channel $\Omega_j\setminus\Omega^2$, and these constant
terms may be absorbed into $\nabla p$). Recalling \eqref{pp6}, from (\ref{gg3}) we infer
\begin{equation}\label{uni-32}
J_k^2= \int\limits_{\mathcal{B}_k}\mathbf{g}\cdot\we_k+\int\limits_{\mathcal{B}_k}(\mathbf{w}_k \cdot \nabla) \we_k\cdot\mathbf{U}\doteq I_1+I_2.
\end{equation}
By \eqref{uni-27} we deduce
\begin{equation}\label{uni-33-1}
\exists c=c(\Omega)>0\, :\quad\|\mathbf{g}\|_{L^2(\Omega)}<\frac{c\, \e}{N}\,,
\end{equation}
therefore, 
\begin{equation}\label{uni-33} I_1\le \frac14 J_k^2+\frac{\cc_0}{N^2}\e^2 \quad\mbox{ for some \ }\cc_0=\cc_0(\Omega)>0\, .
\end{equation}
Similarly, by (\ref{uni-27}),
\begin{equation}\label{uni-34}
\exists \cc_1=\cc_1(\Omega)>0\, :\quad I_2\le \frac{\cc_1 \varepsilon}{N} \,J_k^2\, .
\end{equation}
Hence, for $N>4\cc_1$ we get $J_k^2\le2{\cc_0}\e^2/{N^2} $ and, consequently, for the limiting solution $\we$ we obtain
$$\int\limits_\Omega|\nabla\we|^2\le2\frac{\cc_0}{N^2}\e^2.$$
Then 
from the homogeneous boundary conditions $\we|_{\partial\Omega}\equiv\0$   and  from the estimates for the solutions to the Stokes system (see, e.g., \cite[Section~IV.6]{galdi2011introduction} or
\cite[Section~2.4.2]{korobkov2024steady}\,) we get
\begin{equation}  \label{uni-35}
\exists \cc_2=\cc_2(\Omega)>0\, :\quad\|\we\|_{W^{2,\frac32}(\Omega\cap B(z_0))}<\cc_2\frac{\e}{N},\qquad\forall z_0\in\R^2.
\end{equation}
Therefore, by Sobolev Embedding Theorem, 
\begin{equation}  \label{uni-35---}
\exists \cc_3=\cc_3(\Omega)>0\, :\quad\|\we\|_{L^\infty(\Omega)}<\cc_3\frac{\e}{N}.
\end{equation}
Now the estimate~(\ref{uni-22}) follows from~(\ref{uni-35---}) and (\ref{uni-27})$_{2}$ if we take  $N>\max\{2,2\cc_3\}$.  \hfill $\qed$

\medskip
{\sc Proof of Proposition~\ref{th:main5} and of Theorem \ref{th:main3}}. Let $\ue\in W^{1,2}_\loc(\overline\Omega)$ be a~weak solution
to~(\ref{NSE}) satisfying~(\ref{uni-23}), and let $\tu\doteq \ue+\we\in W^{1,2}_\loc(\overline\Omega)$ be another solution to the same problem. Both $\ue,\tu$ belong to $W^{2,2}_\loc(\overline\Omega)$ because of regularity properties to the Stokes system. Then the vector field $\mathbf{w} \in W_{\text{loc}}^{2,2}(\overline\Omega)$ is a~strong solution to the following system:
\begin{equation}  \label{NSE-www}
	\left\{
	\begin{aligned}
		&- \Delta \mathbf{w} =- (\tu \cdot \nabla) \mathbf{w} -(\mathbf{w} \cdot \nabla) \mathbf{u} - \nabla p   \qquad  \ \ \mbox{ in } \ \ \Omega \, , \\[4pt]
		& \nabla \cdot \mathbf{w} = 0\qquad  \ \ \mbox{ in } \ \ \Omega \, ,  \\[4pt]
		&\mathbf{w} = \0 \ \ \mbox{ on } \ \ \partial \Omega \, , \\[4pt]
		&  	\int_{\sigma_j^0} \mathbf{w} \cdot \ee_1^j = 0 \qquad \forall j \in \{1,\dots,J \} \, .
	\end{aligned}
	\right.
\end{equation}

Then, after an integration by parts, we obtain
\begin{equation}  \label{uni-40mm-}
D(t)\doteq\int\limits_{\Omega^t}|\nabla\we|^2=\int\limits_{\OO^t}(\mathbf{w} \cdot \nabla)\we\cdot\mathbf{u}+
\frac12\int\limits_{\sigma^t}\biggl[\nabla|\we|^2-|\we|^2\tu-2\,\bigl(p+\we\cdot\ue\bigr)\we\biggr]\cdot\n,\qquad \forall t\ge0
\end{equation}
(cf. with~(\ref{ug9})\,). \ By (\ref{ug-est-s1}) and (\ref{uni-23}),
\begin{equation}\label{uni-41mm-}\biggl|\int\limits_{\OO^t}(\mathbf{w} \cdot \nabla)\we\cdot\mathbf{u}\biggr|
	\le \sqrt{\cc}\,\e_0\, D(t)\qquad\forall t\ge 0,\end{equation}
where $\cc=\cc(\Omega)>0$ is the constant in~(\ref{ug-est-s1}).  Assuming $\sqrt{\cc}\,\e_0<\frac12$, we find
\begin{equation}  \label{uni-40}
	D(t)\le\int\limits_{\sigma^t}\biggl[\nabla|\we|^2-|\we|^2\tu-2\,\bigl(p+\we\cdot\ue\bigr)\we\biggr]\cdot\n\qquad \forall t\ge0,
\end{equation}
Now we repeat the arguments from Section~\ref{poet6} with some simplifications.
Put
\begin{equation}\label{uni-41}
	h(t)\doteq \int\limits_{t-1}^t D(\tau)\,d\tau.
\end{equation}
Then by~(\ref{uni-40}),
\begin{equation}\label{uni-ug15}h(t)\le \int\limits_{\OO^{t-1,t}}\biggl[\nabla|\we|^2-|\we|^2\tu-2\,\bigl(p+\we\cdot\ue\bigr)\we\biggr]\cdot\n,\qquad
\forall t\ge 1,
\end{equation}
furthermore,
\begin{equation}\label{uni-ug16}
\biggl|\int\limits_{\OO^{t-1,t}}\biggl[\nabla|\we|^2-|\we|^2\tu-2\,\bigl(p+\we\cdot\ue\bigr)\we\biggr]\cdot\n\biggr|\le c_*
\biggl[ \int\limits_{\OO^{t-1,t}}|\nabla\we|^2+\biggl(\int\limits_{\OO^{t-1,t}}|\nabla\we|^2\biggr)^\frac32\biggr] \qquad\forall t\ge 1,
\end{equation}
with some constant~$c_*>0$, see \cite[proof of Theorem~2.2]{ladyzhenskaya1983determination} for details. Note that the zero flux condition~(\ref{NSE-www})$_4$ again plays the key role in the proof of \eqref{uni-ug16}. The definition~(\ref{uni-41}) implies that
\begin{equation}\label{uni-ug19}
h'(t)=D(t)-D(t-1)=\int\limits_{\OO^{t-1,t}}|\nabla\we|^2\qquad\forall t\ge 1.
\end{equation}
Then from (\ref{uni-ug15})--(\ref{uni-ug19}) we get
\begin{equation}\label{uni-ug20}h(t)\le c_*\biggl[h'(t)+h'(t)^\frac32\biggr]\qquad \forall t\ge 1.\end{equation}
Since $\we\not\equiv\0$, there exists $t_0$ such that $D(t_0-1)=c_0>0$. \ 
Then $c_0\le h(t)$ for all $t\ge t_0$ and, by (\ref{uni-ug20}),
$$
c_1\le h'(t)\qquad \forall t\ge t_0
$$
for some $c_1>0$. Finally, (\ref{uni-ug20}) implies $c_0<h(t)\le \cc_* h'(t)^\frac32$ for all $t\ge t_0$ and some $\cc_*>0$. This differential
inequality easily implies the required~(\ref{uni-25}). So the proofs of Proposition~\ref{th:main5}
and Theorem~\ref{th:main3} are finished. \hfill $\qed$

\section{Two physical models}\label{physics}

We give here two examples of physical models described by system \eqref{NSE}.\par\smallskip
{\bf Suspension bridges.} Within the unbounded strip $S=\R\times(0,1)$ containing a compact obstacle $\Sigma\subset S$,  \eqref{NSE} describes
the (planar) wind flow on the cross-section of the deck of a suspension bridge (in Figure \ref{fig_bridge}, left: the Bosphorus Bridge
in Istanbul). Some bridges, such as the Humber Bridge in England (see \cite[Figure 6-12]{kawada}), have a smoothed irregular
hexagon as cross-section, see Figure \ref{fig_bridge} (right).
\begin{figure}[H]
	\begin{center}
		\includegraphics[height=24mm]{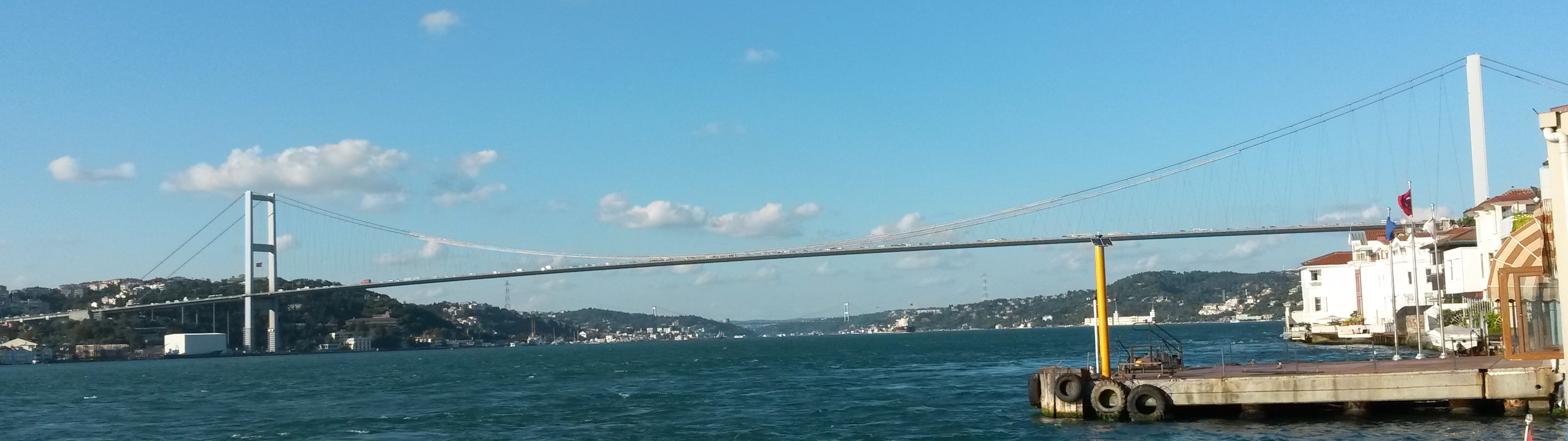}\quad\includegraphics[height=24mm]{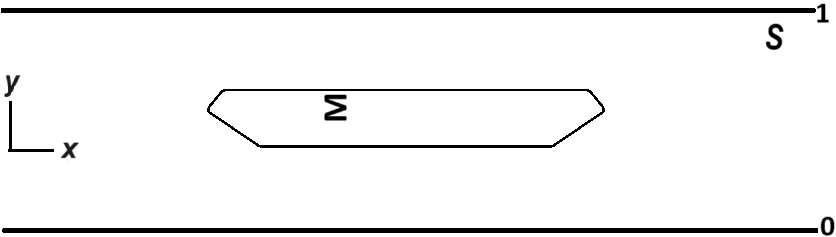}
	\end{center}
	\vspace{-5mm}
	\caption{The Bosphorus suspension bridge (left) and the cross-section of the Humber Bridge (right).}\label{fig_bridge}
\end{figure}

The decks of such bridges are subject to several structural forces (the restoring forces due to the elastic cables+hangers system) and to
the so-called {\it lift force} \cite{langre}. The latter force creates a fluid-structure interaction and generates another force
$\fe\in L^2(\Omega)$ which mainly acts in a compact neighborhood $\Omega_0\subset\Omega$ of the cross-section $\Sigma$.
For this model, $\Omega=S\setminus\Sigma$ so that $I=1$, $J=2$ with $h_1=h_2=1$.
The strip $S$ represents the part of atmosphere going from the Earth (or sea) level to some fixed altitude.
Since the domain is unbounded, the air may be considered incompressible and its flow is then described by \eqref{NSE}$_{1,2}$.
Moreover, the wind velocity is null on the deck which translates into $\aa=0$ in \eqref{NSE}$_3$ and there
exist $V_\pm\in C^1[0,1]$ satisfying
$$
V_\pm(0)=0\, ,\ V_\pm(1)=\overline{V}\ge0\, ,\quad\int_0^1 V_-(y)dy=\int_0^1 V_+(y) dy\, ,\qquad
\quad \lim_{x\to\pm\infty}\ue (x,y)=V_\pm(y)\quad\forall y\in[0,1]\, ,
$$
so that \eqref{ch5} is fulfilled. Typical examples are the Couette flow $V_\pm(y)=\overline{V}y$ or ``half''
Poiseuille flow $V_\pm(y)=\overline{V}y(2-y)$ with $\overline{V}>0$ measuring the wind velocity at altitude $y=1$.
The equilibrium positions of the deck for small Reynolds number $\overline{V}$ were determined in \cite{berchio2024equilibrium,gazspe,gazzola2022connection}
for symmetric flows $V_\pm$ and obstacles $\Sigma$, and in \cite{edofil,bocchi2024measure} in a general non-symmetric framework. Some of these results
were obtained in the rectangle $S_R=(-R,R)\times(0,1)$ (containing $\Sigma$) instead of the unbounded strip $S$: although $\partial R$ is
merely Lipschitz, the presence of (convex) right angles allows to use the classical regularity theory \cite{kellogg1976regularity}.
Then, the fluid domain is $\Omega_R=\{(x,y)\in\Omega;\, |x|<R\}$ and the related boundary-value problem reads
\begin{equation}\label{BVprobR}
	\begin{array}{cc}
		-\Delta \ue +\ue \cdot\nabla \ue +\nabla p=\fe,\quad\nabla\cdot \ue =0\quad\mbox{ in }\Omega_R\\
		\ue _{\Sigma}=\ue (x,0)=\ue (x,1)=\0\quad\forall|x|\le R\, ,\qquad \ue (\pm R,y)=V_\pm(y)\ee_1\quad\forall y\in[0,1].
	\end{array}
\end{equation}

Both \eqref{NSE} and \eqref{BVprobR} are relevant for the wind-bridge problem. Before designing a bridge, several experiments
are performed on a reduced scale in a wind tunnel (WT) where artificial winds "blow air flows" $V_\pm$ at the inlet/outlet,
see Figure \ref{fig_bridge2}.
\begin{figure}[h]
	\begin{center}
		\scalebox{-1}[1]{\includegraphics[width=0.48\textwidth]{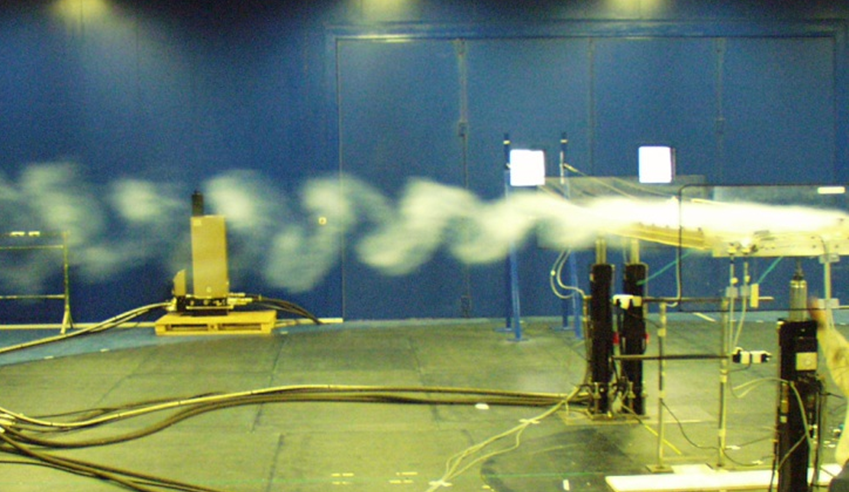}}\qquad
		\includegraphics[scale=0.29]{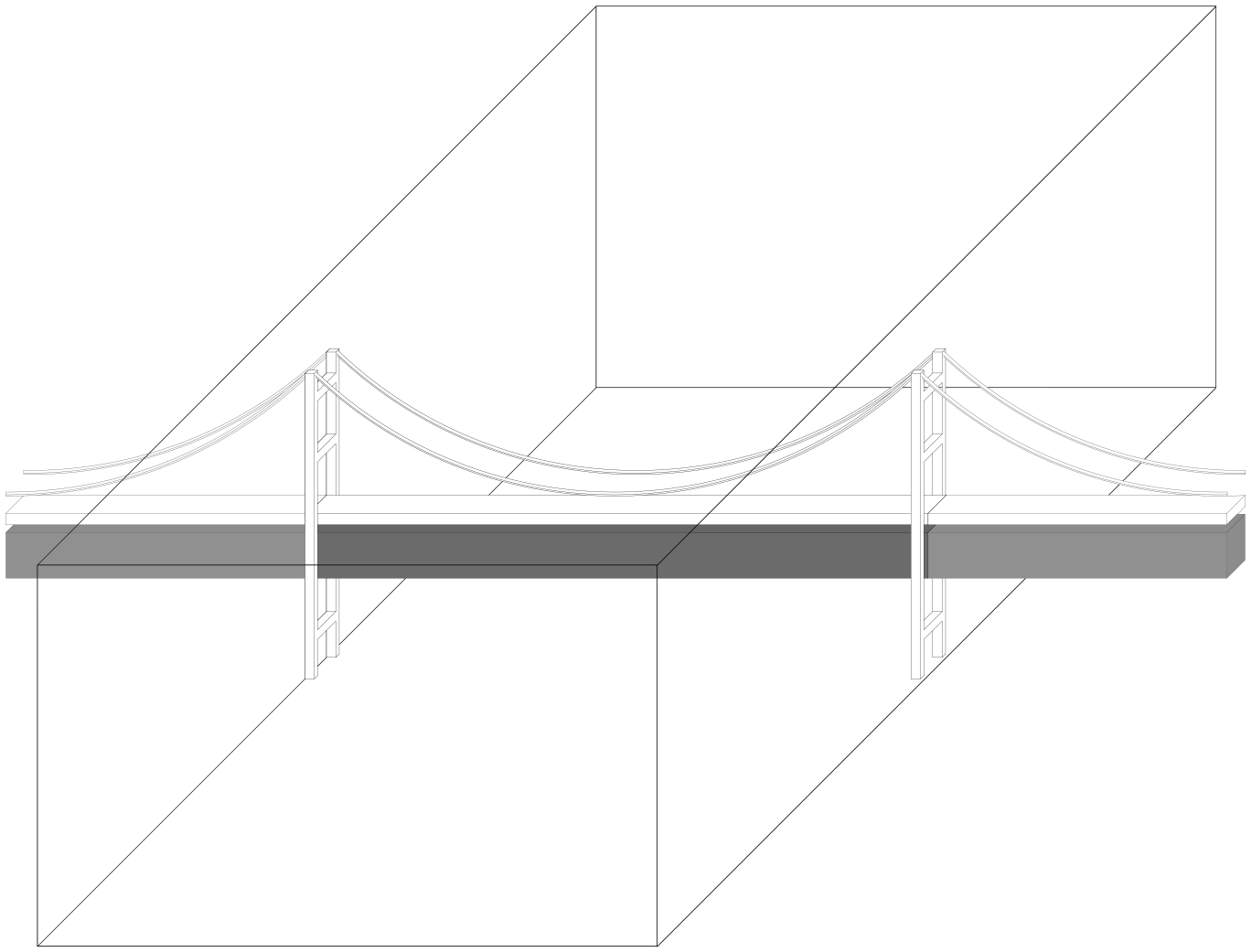}
	\end{center}
	\vspace{-5mm}
	\caption{Flows in the Politecnico WT (left) and sketch of suspension bridge in a WT (right).}\label{fig_bridge2}
\end{figure}
The flow velocities are 0 at the level of the floor ($y=0$) and $\overline{V}>0$ on the ceiling ($y=1$), and may have fairly different
behaviors for $y\in(0,1)$.
This is modeled by \eqref{BVprobR}, with $\Sigma$ being the cross section of the deck, $2R$ is the length of the WT, 1 its height.
The WT experiments aim to forecast the behavior of actual bridges modeled by \eqref{NSE}, which makes sense only if the solutions
to \eqref{BVprobR} converge (in a suitable sense) towards the solutions to \eqref{NSE} as $R\to\infty$, possibly by quantifying the gap between
the solutions in order to bound the discrepancies between (large) WT experiments and real structures.
As we have explained in the Introduction, this problem is still open and its solution is related to the Leray method of invading domains, that we illustrate in Section \ref{proofs-sec}.
If this convergence fails, then WT experiments become useless because they {\it do not} approximate reality!
And, indeed, some behaviors of erected bridges were not forecasted by WT experiments \cite{mdemiranda}. This different behavior appears to be related to the {\it mysterious} statement in~\cite[page 734]{ladyzhenskaya1983determination}.
Theorem~\ref{th:main} and the invading domains procedure merely state that, for the WT experiment, there exists a steady air configuration around
the deck of the bridge with given (possibly large) {\it magnitude} of inflow/outflow without any guarantee that the {\it shape} of the flows at
infinity is preserved. Theorem \ref{th:main2} states that, if the flows are sufficiently small, then regardless of their {\it shape} in WT
experiments, the behavior at infinity always coincides with the corresponding Couette-Poiseuille flow \eqref{CP}.\par\medskip
{\bf Irrigation fountain.} Imagine that we need to irrigate a large agricultural region and that there is
an underground water source placed somewhere in the region. What we can do is to build a fountain in order to allow the water to reach the
Earth level and to dig some channels around the fountain in order to carry the water all over the region, possibly through smaller
``subchannels'', see Figure \ref{fountain}.
\begin{figure}[h]
	\begin{center}
		\includegraphics[width=0.45\textwidth]{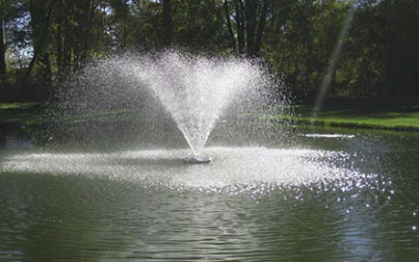} \qquad \includegraphics[width=0.45\textwidth]{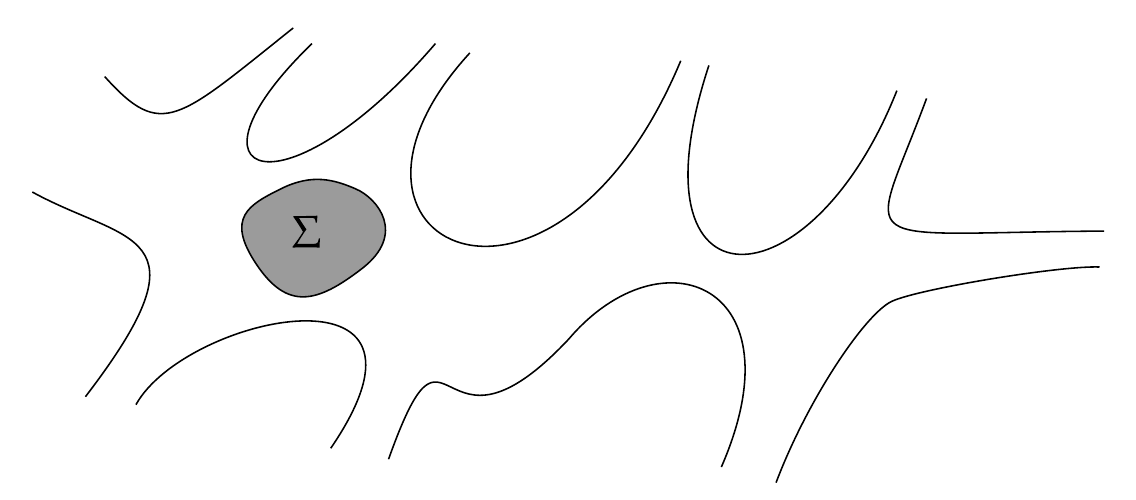}
	\end{center}
	\vspace{-5mm}
	\caption{Irrigation fountain (left) and 9 outflow channels (right).}\label{fountain}
\end{figure}
This is modeled by \eqref{NSE} with $\Sigma$ being the boundary of the fountain (so that $I=1$), while the number of
channels coincides with $J$: not all the channels need to have the same width, it may be $h_1\neq h_2\neq h_3...$.
The water flow is again described by \eqref{NSE}$_{1,2}$. Concerning \eqref{NSE}$_3$, due to viscosity one has $\aa=0$ on the boundary of
the channels, whereas $\aa\cdot\n>0$ on $\Sigma$ (outflow from the boundary of the fountain).
Then the total outflow of the channels (at infinity) needs to balance the inflow and \eqref{ch5} becomes
$$
\sum\limits_{j=1}^J\Fe_j=-\int\limits_{\Sigma}\aa\cdot\n\, .
$$
The most realistic situation occurs when
$$
\Fe_j=\frac{-h_j}{\sum_{m=1}^{J}h_m}\int\limits_{\Sigma}\aa\cdot\n\qquad\forall j=1,...,J,
$$
namely when the flux in each channel is proportional to its width.\par
If the fountain is replaced by a sink (as in the case of a sewer), the same model remains valid but the sign of the flux changes and, then,
$\aa\cdot\n<0$ on $\Sigma$. In fact, the region may also contain multiple fountains and sewers. The target is to plan sufficiently strong
shores $\partial(\Omega\cup\Sigma)$, able to resist to the pressure of given fluxes rates, regardless of the flow shapes. In this case,
Theorem \ref{th:main} and the invading domains technique are much more useful, whereas \eqref{NSE-est} states that one can locally bound
the pressure and the total energy (velocity) of the water.\par\medskip

\appendix\section{Appendix (the construction of the flux carrier).}
\label{appendix}

Here we present an accurate proof of Theorem~\ref{fluxcarrier} concerning the existence of the flux carrier $\mathbf U$ satisfying the required properties. 
In principle, the construction of the solenoidal extension follows the original method proposed by Hopf (see, for example,
\cite[Lemma IX.4.2]{galdi2011introduction} or \cite[Lemma 3.3.1]{korobkov2024steady}\,), but here we require the Leray-Hopf inequality
\eqref{HII} to be satisfied in the (possibly unbounded) rectangles $\Omega^{a,b}_{j}$, over test functions that vanish only on the lower
and upper boundary of $\Omega^{a,b}_{j}$, with $0<a<b$.\par
Firstly, for $\Omega^2$ as in \eqref{Omega_ab}, let $\mathbf{V} \in W^{1,2}(\Omega^2)$ be the unique weak solution
to the Stokes system
\begin{equation}\label{ttO}
\left\{
\begin{aligned}
&- \Delta \mathbf{V}  + \nabla Q = \0 \, , \quad\nabla \cdot \mathbf{V} = 0 \quad\mbox{ in } \ \Omega^2 \, , \\[6pt]
& \mathbf{V} = \ae \quad \mbox{ on } \ \ \partial\Omega\cap \partial \Omega^2 \, , \\[6pt]
& \mathbf{V} = \mathbf{CP}_j \quad \mbox{ on } \ \ {\sigma^{2}_{j}}  \, , \quad \forall j \in \{1,\dots, J\} \, ,
\end{aligned}
\right.
\end{equation}
whose existence is guaranteed by the compatibility condition \eqref{ch5} (recall also \eqref{fluxa}). Notice that the boundary data in \eqref{ttO}$_2$-\eqref{ttO}$_3$ belongs to $W^{3/2,2}(\partial\Omega^2)$ (recall \eqref{nscouette}$_2$). Since $\Gamma$ is of class $\mathcal{C}^{2}$ and the regions $\Omega_j^{0,2}$ are convex polygons for $j \in \{1,\dots, J\}$ (see Figure \ref{fig:5}), standard elliptic regularity results for the Stokes system \cite[Theorem 1]{kellogg1976regularity} in the truncated domain $\Omega^2$ can be applied to deduce that $(\mathbf{V},Q) \in W^{2,2}(\Omega^2) \times W^{1,2}(\Omega^2)$.

Secondly, given $j \in \{1,\dots, J\}$, let $\Psi_{j} \in \mathcal{C}^{\infty}(\overline{\Omega_{j}};\mathbb{R})$ be the stream function associated with the Couette-Poiseuille flow \eqref{CP} in $\Omega_{j}$, meaning that
\begin{equation}\label{str-p1}
\mathbf{CP}_{j}(z) = - \nabla^{\perp} \Psi_{j}(z) \qquad \forall z \in \Omega_{j} \, .
\end{equation}
Such stream function is given explicitly, in local coordinates, by
\begin{equation}\label{str-p2}
	\Psi_j(z) \doteq \frac{A_j}{3} y_{j}^3 + \frac{B_j}{2} y_{j} + b_j^0 y_{j} \qquad \forall z \in \overline{\Omega_{j}} \, .
\end{equation}
On the other hand, let
$$
\delta_{j}(z) \doteq \text{dist}(z, \Gamma^{0}_{j} \cup \Gamma^{1}_{j}) = \min \{y_{j}, h_{j} - y_{j} \} = \dfrac{h_{j} - | h_{j} - 2y_{j} |}{2} \qquad \forall z \in \overline{\Omega_{j}} \, ,
$$
denote the distance between a point $z \in \overline{\Omega_{j}}$ and the walls $\Gamma^{0}_{j} \cup \Gamma^{1}_{j}$. Given any
$\varepsilon \in (0,1]$, as in \cite[Lemma III.6.2]{galdi2011introduction} (see also \cite[Lemma 1]{kravcmar2001weak}) we can construct a
scalar function $\psi_{j}^{\varepsilon} \in \mathcal{C}^{\infty}(\overline{\Omega_{j}})$ (depending only on the vertical variable
$y_{j} \in [0,h_{j}]$) such that:\par
(1) $ | \psi_{j}^{\varepsilon}(z) | \leq 1$ \ \ $\forall z \in \overline{\Omega_{j}} $;

(2) $\psi_{j}^{\varepsilon}(z) = 1$ \ \ if \ \ $\delta_{j}(z) \leq \dfrac{1}{2} e^{-2/\varepsilon}$;
	
(3) $0 \le \psi_{j}^{\varepsilon}(z) \le 1$ \ \ if \ \ $\dfrac{1}{2} e^{-2/\varepsilon} < \delta_{j}(z) < e^{-1/\varepsilon} + \dfrac{1}{2} e^{-2/\varepsilon}$;
	
(4) $\psi_{j}^{\varepsilon}(z) = 0$ \ \ if \ \ $\delta_{j}(z) \geq e^{-1/\varepsilon} + \dfrac{1}{2} e^{-2/\varepsilon}$;
	
(5) $ | \nabla \psi_{j}^{\varepsilon}(z) | \leq \dfrac{\varepsilon}{\delta_{j}(z)}$  \ \ $\forall z \in \Omega_{j}$.

We emphasize that both the distance function $\delta_{j}$ and the cut-off function $\psi_{j}^{\varepsilon}$ depend only on the vertical
variable $y_{j} \in [0,h_{j}]$. We then define the vector field
\begin{equation} \label{vectorfield1rect}
	\mathbf{V}^{\varepsilon}_{j}(z)  \doteq - \nabla^{\perp} ( \psi_{j}^{\varepsilon} \, \Psi_{j})(z) = \psi_{j}^{\varepsilon}(z) \, \mathbf{CP}_{j}(z) - \Psi_{j}(z) \, \nabla^{\perp} \psi_{j}^{\varepsilon}(z) \qquad \forall z \in \overline{\Omega_{j}} \, ,
\end{equation}
which is an element of $\mathcal{C}^{\infty}(\overline{\Omega_{j}})$ verifying, owing to the previous construction, the following:
\begin{equation}\label{vjeps}
	\nabla \cdot \mathbf{V}^{\varepsilon}_{j} =0 \ \ \mbox{ in } \ \ \Omega_{j} \qquad \text{and} \qquad \mathbf{V}^{\varepsilon}_{j} = \mathbf{CP}_{j} \ \ \mbox{ on } \ \ \Gamma^{0}_{j} \cup \Gamma^{1}_{j} \, .
\end{equation}
Furthermore, from \eqref{vectorfield1rect} and the properties of $\psi_{j}^{\varepsilon}$ stated in items $(1)-(5)$ we infer that
\begin{equation} \label{vort0rect}
	\mathbf{V}^{\varepsilon}_{j}(z) = 0 \qquad \forall z \in \Omega_{j} \setminus \Omega^{\varepsilon}_{j} \, ; \qquad | \mathbf{V}^{\varepsilon}_{j}(z) | \leq |\mathbf{CP}_{j}(z)| + \dfrac{\varepsilon}{\delta_{j}(z)} |\Psi_{j}(z)| \qquad \forall z \in \Omega^{\varepsilon}_{j} \, ,
\end{equation}
where
$$
\Omega^{\varepsilon}_{j} \doteq \left\lbrace z \in \Omega_{j} \ \Big\vert \ \delta_{j}(z) < e^{-1/\varepsilon} + \dfrac{1}{2} e^{-2/\varepsilon} \right\rbrace.
$$
Now, given $2<a<b\le+\infty$ and any vector field $\bbe \in  W^{1,2}(\Omega_j^{a,b})$ verifying \eqref{testeta}, an integration by parts shows that
\begin{equation}\label{HI-2rect}
	\int_{ \Omega_j^{a,b}} (\bbe\cdot \nabla)\mathbf{V}^{\varepsilon}_{j} \cdot \bbe = - \int_{ \Omega_j^{a,b}} (\bbe\cdot \nabla) \bbe \cdot \mathbf{V}^{\varepsilon}_{j} + \int_{\partial \Omega_j^{a,b}} (\bbe\cdot \n) (\mathbf{V}^{\varepsilon}_{j} \cdot \bbe) \, .
\end{equation}
Notice that $\mathbf{V}^{\varepsilon}_{j}(z) \cdot \mathbf{e}^{j}_{2} = 0$ for every $z \in \overline{\Omega_{j}}$, see \eqref{vectorfield1rect}.
Setting $\eta^{j}_{1} \doteq \bbe \cdot \mathbf{e}^{j}_{1}$ in $\Omega_j^{a,b}$, and recalling that $\bbe$ vanishes on
$\Gamma^{0}_{j} \cup \Gamma^{1}_{j}$, it follows from the Divergence Theorem that
\begin{equation}
	\int_{\partial \Omega_j^{a,b}} (\bbe\cdot \n) (\mathbf{V}^{\varepsilon}_{j} \cdot \bbe) = \int_{\partial \Omega_j^{a,b}} |\eta^{j}_{1}|^{2} (\mathbf{V}^{\varepsilon}_{j} \cdot \n) = \int_{\Omega_j^{a,b}} \nabla \cdot ( |\eta^{j}_{1}|^{2} \, \mathbf{V}^{\varepsilon}_{j} ) = 2 \int_{\Omega_j^{a,b}} \eta^{j}_{1} \nabla \eta^{j}_{1} \cdot \mathbf{V}^{\varepsilon}_{j} \, ,
\end{equation}
which, once inserted into \eqref{HI-2rect}, yields
\begin{equation}\label{HI-3rect}
	\int_{ \Omega_j^{a,b}} (\bbe\cdot \nabla)\mathbf{V}^{\varepsilon}_{j} \cdot \bbe = - \int_{ \Omega_j^{a,b}} (\bbe\cdot \nabla) \bbe \cdot \mathbf{V}^{\varepsilon}_{j} + 2 \int_{\Omega_j^{a,b}} \eta^{j}_{1} \nabla \eta^{j}_{1} \cdot \mathbf{V}^{\varepsilon}_{j} \, .
\end{equation}
In the remaining part of the proof, $C > 0$ will denote a generic constant that depends exclusively on $\{h_{j},b^{0}_{j},b^{1}_{j},\mathcal{F}_{j} \}$, but that may change from line to line. Setting
$$
\Omega^{a,b,\varepsilon}_{j} \doteq \Omega_j^{a,b} \cap \Omega^{\varepsilon}_{j} \, ,
$$
from the Poincaré inequality we infer
\begin{equation} \label{vort03}
	\|\bbe \|_{L^{2}(\Omega^{a,b,\varepsilon}_{j})}  \leq C \varepsilon \|\nabla  \bbe \|_{L^{2}(\Omega^{a,b,\varepsilon}_{j})} \leq C \varepsilon \|\nabla  \bbe \|_{L^{2}(\Omega^{a,b}_{j})}  \, .
\end{equation}
We apply the H\"older inequality, together with \eqref{vort0rect}-\eqref{vort03}, to obtain the bound
\begin{equation} \label{vort3}
	\begin{aligned}
		& \left| \int_{\Omega_j^{a,b}} (\bbe \cdot \nabla)\bbe \cdot \mathbf{V}^{\varepsilon}_{j} \right| = \left| \int_{\Omega^{a,b,\varepsilon}_{j}} (\bbe \cdot \nabla)\bbe \cdot \mathbf{V}^{\varepsilon}_{j}  \right| \leq \|\nabla  \bbe \|_{L^{2}(\Omega^{a,b,\varepsilon}_{j})} \, \| \, |\bbe| \, |\mathbf{V}^{\varepsilon}_{j}| \, \|_{L^{2}(\Omega^{a,b,\varepsilon}_{j})} \\[6pt]
		& \hspace{-4mm} \leq \|\nabla  \bbe \|_{L^{2}(\Omega^{a,b}_{j})} \left\| |\bbe| \, |\mathbf{CP}_{j}| + \varepsilon \, |\Psi_{j}| \, \dfrac{|\bbe|}{\delta_{j}} \right\|_{L^{2}(\Omega^{a,b,\varepsilon}_{j})} \\[6pt]
		& \hspace{-4mm} \leq \|\nabla  \bbe \|_{L^{2}(\Omega^{a,b}_{j})}  \left( \| \mathbf{CP}_{j} \|_{L^{\infty}(\Omega_{j})} \|\bbe \|_{L^{2}(\Omega^{a,b,\varepsilon}_{j})}  + \varepsilon \, \| \Psi_{j} \|_{L^{\infty}(\Omega_{j})} \left\| \dfrac{\bbe}{\delta_{j}} \right\|_{L^{2}(\Omega^{a,b}_{j})}  \right) \\[6pt]
		& \hspace{-4mm} \leq C \varepsilon \left( \|\nabla  \bbe \|_{L^{2}(\Omega^{a,b}_{j})}  + \left\| \dfrac{\bbe}{\delta_{j}} \right\|_{L^{2}(\Omega^{a,b}_{j})}  \right) \|\nabla  \bbe \|_{L^{2}(\Omega^{a,b}_{j})} \, .
	\end{aligned}
\end{equation}
Since $\bbe$ vanishes on $\Gamma^{0}_{j} \cup \Gamma^{1}_{j}$, by the Hardy inequality
$$
\int_{0}^{h_{j}} \dfrac{|\bbe(x_{j},y_{j})|^{2}}{(h_{j} - | h_{j} - 2y |)^2} \, dy_{j} \leq
C \int_{0}^{h_{j}} \left| \dfrac{\partial \bbe}{\partial y}(x_{j},y_{j}) \right|^{2} \, dy_{j} \qquad \forall x_{j} \in (a,b)
$$
holds, from where it directly follows the estimate
\begin{equation} \label{vort4}
	\left\| \dfrac{\bbe}{\delta_{j}} \right\|_{L^{2}(\Omega^{a,b}_{j})} \leq C \left\| \nabla \bbe \right\|_{L^{2}(\Omega^{a,b}_{j})} \, .
\end{equation}
Inserting \eqref{vort4} into \eqref{vort3} gives
\begin{equation} \label{vort5}
	\left| \int_{\Omega^{a,b}_{j}} (\bbe\cdot \nabla)\bbe\cdot \mathbf{V}^{\varepsilon}_{j} \right| \leq C \varepsilon \, \| \nabla\bbe \|_{L^{2}(\Omega^{a,b}_{j})}^2 \, .
\end{equation}
In the same way one can show that
\begin{equation} \label{vort6}
	\left| \int_{\Omega^{a,b}_{j}} \eta^{j}_{1}  \nabla \eta^{j}_{1} \cdot \mathbf{V}^{\varepsilon}_{j} \right| \leq C \varepsilon \, \| \nabla\bbe \|_{L^{2}(\Omega^{a,b}_{j})}^2 \, ,
\end{equation}
and therefore, owing to \eqref{HI-3rect} and recasting the value of $\varepsilon$ (recall that the constant $C > 0$ entering \eqref{vort5}-\eqref{vort6} is independent of $a$ and $b$) we obtain the Leray-Hopf inequality
 \begin{equation}\label{HIv}
	\left| \int_{  \Omega_j^{a,b}} (\bbe\cdot \nabla)\mathbf{V}^{\varepsilon}_{j} \cdot \bbe \right| +
	\left| \int_{ \Omega_j^{a,b}} (\bbe\cdot \nabla)\bbe\cdot \mathbf{V}^{\varepsilon}_{j} \right| \leq \varepsilon \, \| \nabla\bbe \|_{L^{2}(  \Omega_j^{a,b})}^2 \, .
\end{equation}

Thirdly, given $j \in \{1,\dots, J\}$, we smoothly connect the vector fields $\mathbf{V}$ and $\mathbf{V}^{\varepsilon}_{j}$ in the following
fashion. Let $\phi_{j} \in W^{3,2}(\Omega_j^{0,2})$ be a~stream function associated with $\mathbf{V}$ in the rectangle $\Omega_j^{0,2}$:
$$
\mathbf{V}(z) = - \nabla^{\perp} \phi_{j}(z)  \qquad \forall z \in \Omega^{0,2}_{j} \, .
$$
Since $\mathbf{V} = \mathbf{CP}_{j} = \mathbf{a}$ on $\Gamma \cap \partial \Omega^{0,2}_{j}$, see \eqref{ttO}$_2$, there
exist $c^{0}_{j}, c^{1}_{j} \in \mathbb{R}$ such that
$$
\phi_{j} = \Psi_{j} + c^{0}_{j} \ \ \text{ on } \ \ \Gamma_j^{0} \cap \partial \Omega^{0,2}_{j} \qquad \text{and} \qquad \phi_{j} = \Psi_{j} + c^{1}_{j} \ \ \text{ on } \ \ \Gamma_j^{1} \cap \partial \Omega^{0,2}_{j} \, .
$$
Since the stream function $\phi_{j}$ can be modified up to an additive constant, we can assume $c^{0}_{j} = 0$. On the other hand,
the boundary conditions \eqref{ttO}$_3$, together with the flux condition \eqref{fluxcp0}, imply that
$$
\int_{\sigma_j^2} \dfrac{\partial}{\partial \n^{\perp}} (\phi_{j} - \Psi_{j}) = 0 \, ,
$$
thereby yielding $c^{1}_{j} = 0$, that is, 
\begin{equation}\label{strf-coincide}
\phi_{j}(z) \equiv \Psi_{j}(z)\equiv \psi_{j}^{\varepsilon}(z)  \Psi_{j}(z) \qquad\forall z\in\Gamma \cap \partial \Omega^{0,2}_{j}, \quad j=1,\dots, J
\end{equation}
(see~(\ref{str-p1})--(\ref{str-p2})\,). Next, we introduce
a cut-off function $\zeta \in \mathcal{C}^{\infty}(\mathbb{R}^2; [0,1])$ such that
$$
\zeta \equiv 1 \ \ \text{in a neighborhood of} \ \ \overline{\Omega_{0}} \, ; \qquad \zeta \equiv 0 \ \ \text{in a neighborhood of} \ \ \overline{\Omega \setminus \Omega^2} \, .
$$
Then, we define the vector field $\mathbf{U} \in W^{2,2}_{\loc}(\overline\Omega)$ by
\begin{equation}
  \mathbf{U} \doteq \begin{cases}
  \mathbf{V} & \ \mathrm{ in } \ \ \Omega_0  \\[4pt]
  - \nabla^{\perp} (\zeta \, \phi_{j} + (1 - \zeta) \psi_{j}^{\varepsilon} \, \Psi_{j} ) & \ \mathrm{ in }  \ \ \Omega_j \, , \ \ \forall j \in \{ 1,\dots,J\} \, .
  \end{cases}
\end{equation}
Let us check that $\mathbf{U}$ satisfies the properties stated in \eqref{vecpsi}-\eqref{HII}. A straightforward calculation shows that $\mathbf{U}$ is divergence-free in $\Omega$. From \eqref{ttO}$_2$ and from \eqref{vjeps},  \eqref{strf-coincide} we also deduce 
that $$\mathbf{U} \equiv \mathbf{a}\qquad\mbox{ on }\ \partial \Omega.$$ Given $j \in \{ 1,\dots,J\}$, it can be seen
as in \eqref{fluxcp0}, that
$$
\int_{\sigma_j^0} \mathbf{U} \cdot \ee_1^j = \int_{\sigma_j^0} \mathbf{V} \cdot \ee_1^j = \int_{\sigma_j^0} \mathbf{CP}_{j} \cdot \ee_1^j = \Fe_j \, ,
$$
so that the flux condition in \eqref{vecpsi} is fulfilled. Since $\mathbf{U} = \mathbf{V}^{\varepsilon}_{j}$ in
$\Omega_{j}^{2,\infty}$, for every $j \in \{ 1,\dots,J\}$, the uniform Leray-Hopf inequality \eqref{HII} is obtained directly from \eqref{HIv}.
Finally, the local uniform bound \eqref{uboundz2} follows by construction, since the vector field $\mathbf{V}^{\varepsilon}_{j}$ depends only
on the vertical variable $y_{j} \in [0,h_{j}]$ in $\Omega_{j}^{2,\infty}$, for every $j \in \{ 1,\dots,J\}$, see again \eqref{vectorfield1rect}.
\hfill$\qed$.

\section{Appendix (entire solutions to the  Euler equations).} \label{poet2}
The purpose of this section is to prove Theorem~\ref{Euler=zero}, by going through a careful study of fine properties of stationary solutions to the Euler equations in the whole $\mathbb{R}^2$. In the sequel, given a vector field $\ve\in W_{0, \sigma}^{1,2}(\Omega)$, we say that
\begin{center}
	{\it {\rm (E)} holds} if $\ve$ satisfies the Euler system~(\ref{Euler0}) with~(\ref{310});\\[4pt]
	{\it {\rm (E-NS)} holds} if $\ve$ is obtained as a weak limit of solutions $(\widehat{\mathbf{w}}_k)_{k \in \mathbb{N}} \subset W_{0, \sigma}^{1,2}(\Omega)$ to~(\ref{NS-Ek}).
\end{center}

The next statement was proved by Kapitanskii--Pileckas \cite{kapitanskii1983spaces} and Amick \cite{amick1984existence}, see also \cite[Lemma~3.4.1]{korobkov2024steady}.

\begin{proposition}
\label{kmpTh2.3'} {\sl Let the conditions {\rm (E)} be fulfilled.
Then
\begin{equation} \label{bp2} \forall i\in\{1,\dots,I\} \ \exists\, \check q_i\in\R:\quad
q(z)\equiv \check q_i\quad\mbox{for }\Ha^1-\mbox{almost all }
z\in\partial\Sigma_i.\end{equation}
\begin{equation} \label{bp3} \forall j\in\{1,\dots,J\} \ \exists\, \tilde q_j\in\R:\quad
q(z)\equiv \tilde q_j\quad\mbox{for }\Ha^1-\mbox{almost all }
z\in\Gamma_j.\end{equation} }
\end{proposition}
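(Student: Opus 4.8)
\noindent\emph{Proof outline.} This is the classical Bernoulli‐law statement — the total head is constant on each connected component of $\partial\Omega$ — and the plan is to establish it directly from the weak Euler system. The first step would be to pass to Bernoulli form in \eqref{Euler0}$_1$: setting $\omega\doteq\partial_1 v_2-\partial_2 v_1\in L^2(\Omega)$, $\ve^\perp\doteq(-v_2,v_1)$ and $\Phi\doteq q+\tfrac12|\ve|^2$, the pointwise a.e.\ identity $(\ve\cdot\nabla)\ve=\tfrac12\nabla|\ve|^2+\omega\,\ve^\perp$ — valid for $W^{1,2}$ vector fields by the Sobolev chain rule — combined with the weak formulation \eqref{weaksolutionsh4} and the pressure regularity \eqref{311}, yields that $\Phi\in W^{1,s}_\loc(\oO)$ for every $s\in[1,2)$ and
\begin{equation}\label{bernoulli-weak}
\nabla\Phi=-\omega\,\ve^\perp,\qquad\text{so in particular }\ |\nabla\Phi|=|\omega|\,|\ve|\quad\text{a.e. in }\Omega
\end{equation}
(the embedding $W^{1,2}\hookrightarrow L^p$, $p<\infty$, guarantees $\omega\,\ve^\perp\in L^s_\loc$). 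Since $\ve$ has zero trace on $\partial\Omega$, so does $\tfrac12|\ve|^2$, hence the trace of $\Phi$ agrees with that of $q$ there; thus it suffices to prove that the trace of $\Phi$ is $\Ha^1$-a.e.\ constant on each curve $\partial\Sigma_i$ and $\Gamma_j$.

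The second step is a local argument along a fixed component $\Gamma$ (the $\partial\Sigma_i$'s being handled identically and more easily, as they are compact). Using that $\partial\Omega$ is $\mathcal C^2$, one straightens $\Gamma$ in a boundary chart so that the curves parallel to $\Gamma$ at distance $t$ become $\{x_2=t\}\cap\{a<x_1<b\}$; by \eqref{bernoulli-weak} and $\ve|_\Gamma=\0$, the derivative of $\Phi$ along such a curve equals, up to a bounded factor, $\omega\,v_\nu$, where $v_\nu$ is the component of $\ve$ normal to $\Gamma$, and $v_\nu|_\Gamma=0$. For a.e.\ small $t$ one has $\omega(\cdot,t),\ve(\cdot,t)\in L^2(a,b)$; since $v_\nu(\cdot,0)=0$, Cauchy–Schwarz in the normal variable gives $\|v_\nu(\cdot,t)\|_{L^2(a,b)}\le\sqrt t\,\eta(t)$ with $\eta(t)\doteq\|\nabla\ve\|_{L^2\left((a,b)\times(0,t)\right)}\to0$ as $t\to0$, while $\int_0^\delta\|\omega(\cdot,t)\|_{L^2(a,b)}^2\,dt<\infty$ forces a sequence $t_m\downarrow0$ with $\|\omega(\cdot,t_m)\|_{L^2(a,b)}=o\bigl(t_m^{-1/2}\bigr)$. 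Combining, $\|\partial_{x_1}\Phi(\cdot,t_m)\|_{L^1(a,b)}\lesssim\|v_\nu(\cdot,t_m)\|_{L^2(a,b)}\,\|\omega(\cdot,t_m)\|_{L^2(a,b)}\to0$, so $\Phi(\cdot,t_m)$ minus its mean over $(a,b)$ tends to $0$ in $W^{1,1}(a,b)\hookrightarrow C([a,b])$, i.e.\ $\Phi(\cdot,t_m)\to c$ uniformly for some constant $c$. On the other hand $\Phi\in W^{1,s}_\loc(\oO)$ gives $\Phi(\cdot,t)\to\Phi(\cdot,0)$ in $L^s(a,b)$ as $t\to0$; comparing limits yields $\Phi(\cdot,0)\equiv c$ a.e.\ on $(a,b)$. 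Covering $\Gamma$ by such charts and using that $\Gamma$ is connected propagates the constant, which is exactly \eqref{bp2}–\eqref{bp3}.

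The genuinely delicate points, which I expect to be the main obstacles, are: (i) justifying the weak Bernoulli identity \eqref{bernoulli-weak} at the low regularity actually available — $q$ lies only in $W^{1,s}_\loc$ with $s<2$, not in $W^{1,2}_\loc$ — which is exactly where \eqref{311} and the Sobolev chain rule enter; and (ii) the extraction of the good slice heights $t_m$ and the interchange of the boundary‐trace limit $t\to0$ with the oscillation estimate on the slices. Everything else is routine real‑variable manipulation. Alternatively, one may simply invoke \cite{kapitanskii1983spaces,amick1984existence} or \cite[Lemma 3.4.1]{korobkov2024steady}, where precisely this computation is carried out in detail.
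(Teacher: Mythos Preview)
The paper does not prove this proposition at all: it is stated with the attribution ``proved by Kapitanskii--Pileckas \cite{kapitanskii1983spaces} and Amick \cite{amick1984existence}, see also \cite[Lemma~3.4.1]{korobkov2024steady}'' and no argument is given. Your outline is a correct rendition of precisely the classical argument from those references --- Bernoulli rewriting $\nabla\Phi=\pm\omega\ve^\perp$, the observation that the tangential derivative of $\Phi$ along boundary-parallel curves is controlled by $\omega\,v_\nu$, the Fubini/slice selection of good heights $t_m$, and passage to the trace --- so your final sentence (``one may simply invoke \ldots'') is exactly what the paper does. In short: your proposal is sound, and strictly more detailed than the paper itself, which treats the result as known.

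One minor cosmetic point: in the step ``$\Phi(\cdot,t_m)$ minus its mean tends to $0$ in $W^{1,1}\hookrightarrow C$, i.e.\ $\Phi(\cdot,t_m)\to c$ uniformly'', the identification of the limiting constant $c$ uses the $L^s$-trace convergence you invoke in the next sentence (the oscillation estimate alone gives convergence to \emph{some} constant only after you know the means converge). This is implicit in your write-up but worth making explicit if you expand the sketch.
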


For convenience, we extend the function $\ve$ to the whole plane $\R^2$
by the natural identity
\begin{equation}
\label{axc10.10} \ve(z)\equiv\0\qquad\forall z\in \R^2\setminus\Omega.
\end{equation}
Simultaneously, we extend the pressure $q$ to the whole plane $\R^2$ by the corresponding constants
from (\ref{bp2})--(\ref{bp3}). The extended functions inherit the properties of the
original ones. Namely, we have
\begin{equation} \label{nablav2-1}
	\int\limits_{\R^2} |\nabla\ve|^2 \le 1,
\end{equation}
and $q\in W^{1,s}_\loc(\R^2)$, $s\in(1,2)$, and the Euler equations~(\ref{Euler0}) are
fulfilled almost everywhere in~$\R^2$. Hence, the pair
$(\ve,q)$ is a weak (= Sobolev) solution to the system~(\ref{Euler0}) {\it in the whole plane}.

Due to \eqref{nablav2-1}, \eqref{axc10.10} as well as  the geometry of the domain~$\Omega$, we derive that
\begin{equation} \label{bp4}
\ve\in L^2(\R^2),
\end{equation}
and then
\begin{equation} \label{hp4-2}
	\nabla q\in L^1(\R^2).
\end{equation}
In fact, using the classical div-curl lemma we can deduce more on the regularity of the pressure, as the following assertion holds.

 \begin{proposition}[see Theorem~4.6 in \cite{KPR20}]
\label{pp} {\sl Let the conditions {\rm (E)} be fulfilled. Then
\begin{equation} \label{co-1}
\nabla^2 q\in L^1(\R^2), \qquad\nabla q\in L^2(\R^2).
\end{equation}
In particular, the function~$q$ is continuous in $\R^2$ and convergent at infinity, i.e.,
\begin{equation} \label{co-2}
\exists \lim\limits_{|z|\to\infty}q(z)\in \R.
\end{equation}}
\end{proposition}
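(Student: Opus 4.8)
The plan is to reduce the regularity of $q$ to a single Poisson equation whose right‑hand side lies in the real Hardy space $\mathcal H^1(\R^2)$, and then apply Calder\'on--Zygmund theory on Hardy spaces. Recall that, after the extension \eqref{axc10.10}, the pair $(\ve,q)$ is a Sobolev solution of \eqref{Euler0} in the whole plane with $\ve\in W^{1,2}(\R^2)$ (by \eqref{nablav2-1} and \eqref{bp4}), $q\in W^{1,s}_{\loc}(\R^2)$ for some $s>1$ (cf.\ \eqref{311}), and $\nabla q\in L^1(\R^2)$ (by \eqref{hp4-2}). Taking the divergence of \eqref{Euler0}$_1$ and using $\nabla\cdot\ve=0$ one obtains, in the sense of distributions on $\R^2$,
\[
\Delta q \;=\; -\,\partial_i\partial_j(v_iv_j)\;=\;-(\partial_iv_j)(\partial_jv_i)\;=\;2\det\nabla\ve ,
\]
the last equality being the elementary algebraic identity $(\partial_iv_j)(\partial_jv_i)=-2\det\nabla\ve$ valid for every divergence‑free planar field. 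For a merely Sobolev $\ve$ this is justified by approximating $\ve$ in $W^{1,2}(\R^2)$ by smooth solenoidal fields and passing to the limit, which is legitimate since $\ve\otimes\ve\in W^{1,r}(\R^2)$ for every $r<2$.

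The crucial point is that $\det\nabla\ve$ is the Jacobian determinant of the $W^{1,2}(\R^2)$ map $\ve=(v_1,v_2)$; by the Coifman--Lions--Meyer--Semmes theorem such Jacobians belong to $\mathcal H^1(\R^2)$, with $\|\det\nabla\ve\|_{\mathcal H^1(\R^2)}\le C\|\nabla v_1\|_{L^2(\R^2)}\|\nabla v_2\|_{L^2(\R^2)}\le C\|\nabla\ve\|_{L^2(\R^2)}^2\le C$. Hence $\Delta q\in\mathcal H^1(\R^2)$. Since the second‑order Riesz transforms $R_iR_j$ are bounded on $\mathcal H^1(\R^2)$ and $\partial_i\partial_jq=R_iR_j(\Delta q)$, we conclude $\nabla^2q\in\mathcal H^1(\R^2)\subset L^1(\R^2)$, which is the first assertion in \eqref{co-1}. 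For the second assertion, $\nabla q$ coincides, modulo a harmonic vector field, with the first‑order Riesz potential $I_1$ of $\Delta q$; the endpoint Hardy--Sobolev embedding $I_1\colon\mathcal H^1(\R^2)\to L^2(\R^2)$ (composed with bounded Riesz transforms) gives $\nabla q\in L^2(\R^2)$, and the harmonic remainder vanishes by Liouville's theorem, since it is a harmonic vector field belonging to $L^1(\R^2)+L^2(\R^2)$ (here \eqref{hp4-2} enters).

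Finally, \eqref{co-2} follows from \eqref{co-1}: continuity of $q$ is a consequence of $q\in W^{2,1}_{\loc}(\R^2)$ — available now from $\nabla^2q\in L^1(\R^2)$ and $q\in W^{1,s}_{\loc}(\R^2)$ — together with the borderline embedding $W^{2,1}_{\loc}(\R^2)\hookrightarrow C(\R^2)$; while for the limit at infinity one writes $q=E_2*\Delta q+\mathrm{const}$ with $E_2(z)=\tfrac1{2\pi}\log|z|$, uses the vanishing‑moment property $\int_{\R^2}\Delta q=0$ of $\mathcal H^1$‑functions to represent $q(z)-\mathrm{const}=\tfrac1{2\pi}\int_{\R^2}\bigl(\log|z-y|-\log|z|\bigr)\Delta q(y)\,dy$ for large $|z|$, and estimates this integral (splitting at $|y|=|z|/2$ and using $\Delta q\in\mathcal H^1(\R^2)$) to obtain decay. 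Since the hypotheses of \cite[Theorem~4.6]{KPR20} — a Sobolev solution of the stationary Euler system in $\R^2$ with $\nabla\ve\in L^2$, $\ve\in L^2$ and $\nabla q\in L^1$ — are guaranteed by \eqref{nablav2-1}, \eqref{bp4} and \eqref{hp4-2}, the whole statement is also a direct consequence of that theorem. I expect the main obstacle to be the rigorous verification of the Hardy‑space membership $\Delta q\in\mathcal H^1(\R^2)$ for non‑smooth $\ve$ (the approximation step and the div--curl identification) and, to a lesser extent, the quantitative estimate underlying the decay in \eqref{co-2}.
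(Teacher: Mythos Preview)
The paper does not supply its own proof of this proposition; it simply cites Theorem~4.6 of \cite{KPR20} and, in the sentence immediately preceding the statement, indicates that ``using the classical div--curl lemma we can deduce more on the regularity of the pressure''. Your argument is precisely the div--curl/Hardy--space route that this remark points to: the identification $\Delta q=2\det\nabla\ve$ for divergence--free $\ve$, the Coifman--Lions--Meyer--Semmes theorem placing the Jacobian in $\mathcal H^1(\R^2)$, and Calder\'on--Zygmund/Hardy--Sobolev estimates on $\mathcal H^1$ to recover $\nabla^2q\in L^1$ and $\nabla q\in L^2$. This is the expected proof and matches the approach of the cited reference, so your proposal is correct and aligned with the paper.
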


Without loss of generality, we assume that
 \begin{equation} \label{co-3}
 \lim\limits_{|z|\to\infty}q(z)=0.
\end{equation}
Then the identities~(\ref{bp3}) can be simplified as
\begin{equation} \label{bp3-0}
q(z)\equiv 0,\qquad\forall z\in\Gamma_1\cup\dots\cup\Gamma_J.\end{equation}
By \eqref{hp4-2} and \eqref{bp3-0}, there holds that
 \begin{equation} \label{co-5}
q\in L^1(\R^2).
\end{equation}
The Bernoulli pressure 
$$\Phi\doteq q+\frac12|\ve|^2$$ 
plays a key role in proving the identity~(\ref{pp8}). Recall that
\begin{equation}  \label{pp9}
\nabla\Phi=\omega\ve^\bot=\omega\,(-v_2,v_1),
\end{equation}
where $\omega(z)\doteq -\partial_1v_2+\partial_2v_1$ is the corresponding vorticity.
By construction, $\Phi\in W^{1,1}(\R^2)$ and
\begin{equation} \label{bf0}
\Phi(z)\equiv 0\qquad\forall z\in\Gamma_1\cup\dots\cup\Gamma_J.
\end{equation}

The second equality (\ref{Euler0}$_2$)  (which is fulfilled
in the whole plane~$\R^2$ after the
above extension agreement~(\ref{axc10.10})\,) implies the existence of a~continuous stream
function $\psi\in W^{2,2}_{\loc}(\R^2)$ such that
$$\nabla\psi=\ve^\bot,\quad\mbox{i.e.}\quad\frac{\partial\psi}{\partial x_1}=-v_2,\quad\frac{\partial\psi}{\partial x_2}=v_1.$$
Then, due to \eqref{Euler0}$_2$ and \eqref{bp4}, we have
 \begin{equation} \label{bfs1}
\nabla\psi\in L^2(\R^2),\qquad\nabla^2\psi\in L^2(\R^2).
\end{equation}
We choose the stream function in such a~way that
\begin{equation} \label{bfs0}
\psi(z)\equiv 0,\qquad\forall z\in\Gamma_1\cup\dots\cup\Gamma_J.\end{equation}
Then, from (\ref{bfs1}) and from the geometry of the~domain $\Omega$ we deduce that
\begin{equation} \label{bfs00}
\psi(z)\to0\quad\mbox{ as }|z|\to\infty.\end{equation}

Let us formulate some regularity results concerning the considered
functions $\Phi$ and $\psi$.

\begin{lemma}[see, e.g., Theorem 5.4.2 in \cite{korobkov2024steady}]\label{kmpTh2.1}
{\sl If conditions {\rm (E)} are satisfied, then $\psi\in C(\R^2)$
and there exists a set $A_{\ve}\subset \R^2$ such that

 {\rm (i)}\ $ \mathscr{H}^1(A_{\ve})=0$;

{\rm (ii)} all points  $x\in\R^2\setminus A_{\ve}$ are the Lebesgue points for the functions $\ve$ and $\Phi$;

{\rm  (iii) } for every  $\varepsilon >0$ there exists a set
$U\subset \mathbb{R}^2$ with
$\mathscr{H}^1_\infty(U)<\varepsilon$ such that $A_{\ve}\subset
U$ and the functions $\ve, \Phi$ are continuous in $\R^2\setminus
U$.}
\end{lemma}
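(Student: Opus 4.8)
The plan is to establish the four claims — continuity of $\psi$, and items (i), (ii), (iii) — in that order; the first three are soft, while (iii) is the heart of the matter and rests on the real‑analysis machinery of \cite{BKK13} (this is \cite[Theorem~5.4.2]{korobkov2024steady}, whose argument I only sketch). First I would dispose of $\psi$: by \eqref{bfs1} it lies in $W^{2,2}_{\loc}(\R^2)$ (recall $\nabla\psi=\ve^{\bot}$ and $\nabla^2\psi\in L^2(\R^2)$), and since $W^{2,2}$ embeds into the continuous functions in dimension two, $\psi$ has a continuous representative; combined with \eqref{bfs0}--\eqref{bfs00} this gives $\psi\in C(\R^2)$ with $\psi(z)\to 0$ as $|z|\to\infty$.

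Next, for (i)--(ii): since $\ve\in W^{1,2}(\R^2)$ by \eqref{nablav2-1} and \eqref{bp4}, I would appeal to the classical theory of pointwise (fine) properties of Sobolev functions to produce a set $A_{\ve}\subset\R^2$ of vanishing $(1,2)$-capacity — hence with $\mathscr{H}^1(A_{\ve})=0$ — outside of which $\ve$ has an $L^2$-Lebesgue value,
$$
\lim_{r\to 0^+}\frac{1}{\pi r^2}\int_{B_r(z)}|\ve(y)-\ve(z)|^2\,dy=0\qquad\forall\,z\in\R^2\setminus A_{\ve}.
$$
To pass this to $\Phi=q+\frac12|\ve|^2$ I would use Proposition~\ref{pp}, which gives $q\in C(\R^2)$: then $\Phi$ and $\frac12|\ve|^2$ differ by a globally continuous function, and at $z\notin A_{\ve}$ the elementary bound $\bigl|\,|\ve|^2-|\ve(z)|^2\,\bigr|\le|\ve-\ve(z)|\,(|\ve|+|\ve(z)|)$ and the Cauchy--Schwarz inequality (the factor $\frac{1}{\pi r^2}\int_{B_r(z)}(|\ve|+|\ve(z)|)^2$ staying bounded, since $\frac{1}{\pi r^2}\int_{B_r(z)}|\ve|^2\to|\ve(z)|^2$) would show that $z$ is a Lebesgue point of $\Phi$ as well, with value $q(z)+\frac12|\ve(z)|^2$. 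This settles (i)--(ii).

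The delicate point is (iii): mere $(1,2)$-capacity smallness of $A_{\ve}$ is not enough, since every nonempty open subset of $\R^2$ has infinite $\mathscr{H}^1$-measure, so the assertion is genuinely about the one‑dimensional Hausdorff \emph{content} $\mathscr{H}^1_\infty$. Here the plan is to prove that $\ve$ is $\mathscr{H}^1_\infty$-quasicontinuous, following \cite{BKK13}: a maximal‑function estimate, together with the $L^2$ Poincar\'e inequality on balls and a Vitali‑type covering, bounds the $\mathscr{H}^1_\infty$-content of the set where $\ve$ oscillates more than a threshold $\lambda$ at small scales by a quantity controlled by $\int_{E_\lambda}|\nabla\ve|^2$ over a small neighbourhood $E_\lambda$ of that set; then, letting $\lambda\downarrow 0$ and using the absolute continuity of $B\mapsto\int_B|\nabla\ve|^2$, I would obtain for each $\varepsilon>0$ an open $U\supset A_{\ve}$ with $\mathscr{H}^1_\infty(U)<\varepsilon$ on whose complement $\ve$ is continuous; since $q$ is continuous on $\R^2$, so is $\Phi$ there, which is (iii). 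The hard part is exactly this passage — from capacity‑type smallness of the exceptional set to the sharp Hausdorff‑content estimate — and it is where the potential‑theoretic and covering techniques of \cite{BKK13} are indispensable.
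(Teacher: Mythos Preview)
The paper does not supply its own proof of this lemma: it is stated with the attribution ``see, e.g., Theorem~5.4.2 in \cite{korobkov2024steady}'' and used as a black box. So there is no proof in the paper to compare against; your sketch is an attempt to fill in what the cited reference does.

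Your outline is sound. The continuity of $\psi$ via $W^{2,2}_{\loc}\hookrightarrow C$ is standard, and your treatment of (i)--(ii) through the fine properties of $W^{1,2}$ functions is correct. One remark: your reduction of the $\Phi$--statements to the $\ve$--statements by invoking the continuity of $q$ (Proposition~\ref{pp}) is a legitimate shortcut \emph{in this paper's context}, since Proposition~\ref{pp} is established before the lemma and there is no circularity; however, the general result in \cite{korobkov2024steady} is proved without assuming $q$ continuous, working instead with $\Phi\in W^{1,s}_{\loc}$ and the specific structure $\nabla\Phi=\omega\,\ve^{\bot}$. Your route is shorter here but less portable.

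For (iii) your plan is the right one: the $\mathscr{H}^1_\infty$--quasicontinuity of $W^{1,2}$ functions is exactly the Luzin-type refinement that goes beyond classical capacity estimates, and the maximal-function/covering scheme you describe is how it is obtained. The only caution is that this step is genuinely nontrivial and your sketch remains at the level of a strategy; if you were writing this out in full you would need to make the covering argument precise (control of overlaps, passage from local oscillation bounds to continuity on the complement of $U$). As a proof \emph{proposal} it is accurate and complete in spirit.
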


The next assertion, obtained in~\cite[Theorem~3.2]{kpr}, is
another key tool in our approach.

\medskip

\begin{proposition}[{\bf Bernoulli Law for Sobolev solutions}, see, e.g., Section~5.4.2 in~\cite{korobkov2024steady}]
\label{kmpTh2.2} {\sl Let the conditions~{\rm (E)} be valid. Then
there exists a set $A_\ve\subset \R^2$ with $\Ha^1(A_\ve)=0$, such
that for any compact connected set
$K\subset\R^2$ the following property holds: if
\begin{equation}
\label{2.4} \psi\big|_{K}=\const,
\end{equation}
then
\begin{equation}
\label{2.5'}  \Phi(z_1)=\Phi(z_2) \quad\mbox{for
 all \,}z_1,z_2\in K\setminus A_{\bf v}.
\end{equation}
}
\end{proposition}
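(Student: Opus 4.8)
The plan is to make rigorous, in the Sobolev category, the classical heuristic that a flow whose Bernoulli head has gradient parallel to $\nabla\psi$ must have constant head along each streamline; this is exactly the type of assertion for which the refined Morse--Sard theory in Sobolev spaces of Bourgain--Korobkov--Kristensen~\cite{BKK13} is designed, and I would follow the scheme of~\cite[Theorem~3.2]{kpr} (reproduced in~\cite[Section~5.4.2]{korobkov2024steady}). The starting point is the pointwise identity $\nabla\Phi=\omega\,\ve^\bot=\omega\,\nabla\psi$, valid for a.e.\ $z\in\R^2$ by~\eqref{pp9} and the choice of stream function, together with $\omega\in L^2(\R^2)$ (indeed $\|\omega\|_{L^2(\R^2)}\le\|\nabla\ve\|_{L^2(\R^2)}\le1$ by~\eqref{nablav2-1}) and $\nabla\psi\in L^2(\R^2)$, $\nabla^2\psi\in L^2(\R^2)$ by~\eqref{bfs1}. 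For the exceptional set I would take $A_\ve$ to be the set provided by Lemma~\ref{kmpTh2.1}, enlarged by a further $\mathscr{H}^1$-null set of points arising from the level-set analysis below; it is essential that $A_\ve$ be fixed once and for all, uniformly over all admissible $K$.

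The first step treats regular level curves. By the coarea formula $\int_{\R^2}h\,|\nabla\psi|=\int_{\R}\bigl(\int_{\psi^{-1}(t)}h\,d\mathscr{H}^1\bigr)\,dt$ applied with $h=|\omega|$, together with the bound $\int_{\R^2}|\omega|\,|\nabla\psi|\le\|\omega\|_{L^2(\R^2)}\,\|\nabla\psi\|_{L^2(\R^2)}<\infty$, one gets $\int_{\psi^{-1}(t)}|\omega|\,d\mathscr{H}^1<\infty$ for $\mathcal{L}^1$-a.e.\ $t$; by the Morse--Sard theorem of~\cite{BKK13} for the $W^{2,2}$ (in fact $W^{2,1}$) function $\psi$, for $\mathcal{L}^1$-a.e.\ $t$ the level set $\psi^{-1}(t)$ is moreover a finite disjoint union of $\mathcal{C}^1$ curves on which $\nabla\psi\ne\0$. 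On such a curve $\gamma$, parametrized by arc length, $\Phi|_\gamma$ is absolutely continuous (because $\Phi$ is Sobolev and $\mathscr{H}^1$-a.e.\ point is a Lebesgue point of $\Phi$, Lemma~\ref{kmpTh2.1}) with $\frac{d}{ds}\Phi=\nabla\Phi\cdot\tau=\omega\,(\nabla\psi\cdot\tau)=0$, the unit tangent $\tau$ being orthogonal to $\nabla\psi$ along $\gamma$. Hence $\Phi$ is constant on every connected component of $\psi^{-1}(t)$ for $\mathcal{L}^1$-a.e.\ $t$.

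The second step, which I expect to be the genuinely hard part, extends this to an arbitrary compact connected $K$ with $\psi|_K\equiv c$: the value $c$ need not be regular, and $\psi^{-1}(c)$ may be intricate or even of positive measure, so the previous step does not apply to it directly. Following~\cite{BKK13,kpr}, one passes to the Kronrod (Reeb-type) tree $T_\psi$ whose points are the connected components of the level sets of $\psi$, with a continuous factorization $\psi=g\circ\pi$, $\pi\colon\R^2\to T_\psi$; the first step shows that $\Phi$ descends along $T_\psi$ to a function $\widehat\Phi$ that is continuous on the arcs of $T_\psi$. Since $K$ is connected and $\psi$ is constant on $K$, the set $K$ lies inside a single connected component of $\psi^{-1}(c)$, so $\pi(K)$ is a single point of $T_\psi$; therefore $\Phi=\widehat\Phi\circ\pi$ takes one and the same value at every point of $K$ lying outside the $\mathscr{H}^1$-null set where a Lebesgue point of $\Phi$ is missing or the factorization degenerates --- that set being precisely what was absorbed into $A_\ve$. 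This yields $\Phi(z_1)=\Phi(z_2)$ for all $z_1,z_2\in K\setminus A_\ve$ and proves the Proposition. The delicate points, all resting on the fine analysis of~\cite{BKK13}, are the construction of $T_\psi$ and the control of level sets of $\psi$ through its measure-zero --- but possibly uncountable --- set of critical values, as well as the continuity of $\widehat\Phi$ along $T_\psi$; granting that machinery, the reduction $\pi(K)=\{\mathrm{pt}\}$ and the first step are comparatively routine.
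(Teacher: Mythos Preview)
The paper does not supply its own proof of this Proposition; it is quoted verbatim from \cite[Theorem~3.2]{kpr} (with the book reference \cite[Section~5.4.2]{korobkov2024steady}). Your sketch follows exactly the strategy of that cited source: the pointwise identity $\nabla\Phi=\omega\nabla\psi$, the coarea/Morse--Sard treatment of regular level curves of $\psi$, and the extension to an arbitrary compact connected $K\subset\psi^{-1}(c)$ via the Kronrod tree $T_\psi$. As an outline this is correct and coincides with the argument in \cite{kpr}; the parts you correctly flag as delicate --- the construction of $T_\psi$ for a merely $W^{2,2}_{\loc}$ function and, especially, the continuity of the induced function $\widehat\Phi$ on arcs of $T_\psi$ across the (possibly uncountable) set of critical values --- are precisely where the machinery of \cite{BKK13} is needed, and your sketch does not reproduce that part but defers to it, which is also what the present paper does.
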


We may assume without loss of generality that the
sets $A_\ve$ from Lemma~\ref{kmpTh2.1} and Proposition~\ref{kmpTh2.2}
are the same. Proposition~\ref{kmpTh2.2} suggests to seek some information concerning the level sets of the~stream function~$\psi$. This information is provided by the next Morse--Sard--type theorem have been proved by Bourgain,
Korobkov and Kristensen \cite{BKK13}.

\begin{proposition}[see, e.g., Theorem 1.6.2 in~\cite{korobkov2024steady}]
\label{kmpTh1.1}{\sl  Let the conditions~{\rm (E)} be valid. Then for $\mathscr{H}^1$--almost all $t\in\psi(\overline{\Omega})$ the preimage
$\psi^{-1}(t)$ is a finite disjoint family of $C^1$--curves $S_\mu$,
$\mu=1, 2, \ldots, N_t$. Moreover, each $S_\mu$ is a cycle in
$\Omega$, i.e., $S_\mu\subset\Omega$ is homeomorphic
to the unit circle, and the identity $S_\mu(t)\cap A_\ve=\emptyset$ holds. }
\end{proposition}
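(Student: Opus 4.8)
This is the Morse--Sard / level-set structure theorem of Bourgain--Korobkov--Kristensen \cite{BKK13} (Theorem~1.6.2 of \cite{korobkov2024steady}), so the plan is to indicate how its proof runs from the two facts already at hand: $\psi\in W^{2,2}_{\loc}(\R^2)$ with $\nabla\psi,\nabla^2\psi\in L^2(\R^2)$ (see~\eqref{bfs1}) and $\psi\to 0$ at infinity (see~\eqref{bfs00}). Since $W^{2,2}_{\loc}(\R^2)\subset W^{2,1}_{\loc}(\R^2)$, the function $\psi$ lies in the borderline Sobolev class $W^{n,1}$ with $n=2$, which is exactly the class for which \cite{BKK13} proves a Morse--Sard property. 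First I would isolate the consequence needed: the precise representative of $\nabla\psi$, defined $\mathscr{H}^1$--quasi-everywhere since $\nabla\psi\in W^{1,1}_{\loc}(\R^2)$, vanishes on a set $Z_\psi$ whose image $\psi(Z_\psi)$ is $\mathscr{H}^1$--null, so that for $\mathscr{H}^1$--almost every $t\in\psi(\overline\Omega)$ the level set $\psi^{-1}(t)$ contains no critical point.

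Granting that $t$ is such a regular value, the structural part of the $W^{2,1}$ Morse--Sard theorem of \cite{BKK13} gives that $\psi^{-1}(t)$ is a one--dimensional $C^1$ submanifold of $\R^2$ without boundary or self-intersections (near a regular value one has the same picture as in the classical implicit function theorem). Finiteness and the cycle structure then follow from compactness: by~\eqref{bfs00} the set $\{|\psi|\ge|t|\}$ is bounded for each $t\neq 0$, so $\psi^{-1}(t)$ is compact, and since $\psi\equiv 0$ on $\Gamma_1\cup\dots\cup\Gamma_J$ while $\psi$ is constant on each $\partial\Sigma_i$, for every $t$ outside the finite set of these boundary values $\psi^{-1}(t)$ stays at positive distance from $\partial\Omega$, hence lies inside $\Omega$. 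A nonempty compact boundaryless $C^1$ $1$--manifold is a finite disjoint union of $C^1$ Jordan curves, each homeomorphic to a circle; this is the asserted family $S_1,\dots,S_{N_t}$ of cycles in $\Omega$.

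It remains to arrange that these good level sets avoid the exceptional set $A_\ve$ of Lemma~\ref{kmpTh2.1}. A coarea argument gives one half: $\mathscr{H}^1(A_\ve)=0$ forces $|A_\ve|=0$, hence $\int_{A_\ve}|\nabla\psi|\,dz\le|A_\ve|^{1/2}\|\nabla\psi\|_{L^2}=0$, and the coarea formula rewrites this as $\int_{\R}\mathscr{H}^1\!\bigl(A_\ve\cap\psi^{-1}(t)\bigr)\,dt=0$, so that $\mathscr{H}^1\!\bigl(A_\ve\cap\psi^{-1}(t)\bigr)=0$ for a.e.\ $t$. Promoting this from ``$\mathscr{H}^1$--null intersection with the curve'' to genuine disjointness is part of the structure theorem of \cite{BKK13} and rests on the fine trace properties of $\nabla\psi=\ve^\perp$ along regular level curves, together with the continuity of $q$ (Proposition~\ref{pp}), which force every point of such a curve to be a Lebesgue point of both $\ve$ and $\Phi$, hence to lie off $A_\ve$. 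Intersecting the countably many ``a.e.\ $t$'' restrictions collected above yields the full conclusion for $\mathscr{H}^1$--almost every $t\in\psi(\overline\Omega)$, which is precisely what makes the Bernoulli law (Proposition~\ref{kmpTh2.2}) applicable along each $S_\mu$.

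The hard part is the first step, the Morse--Sard property at the critical integrability exponent $W^{2,1}(\R^2)$: the classical Sard theorem requires $C^2$ regularity, and even the $C^1$ property fails for $W^{2,2}(\R^2)$, so one must invoke the substitute of \cite{BKK13}, whose proof is built on delicate Hausdorff-content and Lebesgue-point estimates for the second derivatives of $\psi$ and is the genuine technical core; the compatibility of the level-set structure with the fine continuity set $A_\ve$ of $\ve$ and $\Phi$ is the second delicate point, also furnished by \cite{BKK13}. In the present unbounded geometry the only new ingredient beyond the planar theory is the use of the decay~\eqref{bfs00} to secure compactness of the level sets; everything else is a localisation of that theory.
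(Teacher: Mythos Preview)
The paper does not supply its own proof of this proposition: it is stated as a citation of the Morse--Sard theorem of Bourgain--Korobkov--Kristensen \cite{BKK13} (presented as Theorem~1.6.2 in \cite{korobkov2024steady}), with no argument given. Your proposal is therefore not competing with a proof in the paper but sketching the content of the cited references, and as such it correctly identifies the main ingredients: the $W^{2,1}$ Morse--Sard property for the critical values, the $C^1$ implicit-function picture near regular values, and compactness of nonzero level sets via the decay~\eqref{bfs00}.

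One point deserves tightening. Your route to the disjointness $S_\mu(t)\cap A_\ve=\emptyset$ goes through coarea to obtain $\mathscr{H}^1\bigl(A_\ve\cap\psi^{-1}(t)\bigr)=0$ and then appeals vaguely to ``fine trace properties'' to upgrade this to emptiness. That second step is not obviously available at this level of regularity, and it is not how the argument runs in \cite{BKK13,korobkov2024steady}. The clean mechanism is the Luzin $N$-property for $W^{2,1}(\R^2)$ functions, also proved in \cite{BKK13}: since $\mathscr{H}^1(A_\ve)=0$, one has $\mathscr{H}^1\bigl(\psi(A_\ve)\bigr)=0$ directly, so for $\mathscr{H}^1$--almost every $t$ the value $t$ does not lie in $\psi(A_\ve)$, which is exactly $\psi^{-1}(t)\cap A_\ve=\emptyset$. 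This bypasses the coarea detour and delivers genuine disjointness rather than a null-intersection statement. With that correction your outline is a faithful summary of the cited proof.
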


Most of the previous assertions were formulated under the general conditions~(E), i.e., for $W^{1,2}$ solutions to~(\ref{Euler0}). Now we have to exploit the fact that the~solution $\ve$ is obtained as limit of Navier-Stokes solutions to~(\ref{NS-Ek}).
The next assertion was obtained in~\cite[Lemma~3.3]{kpr}, see also~\cite[Lemma~5.4.8]{korobkov2024steady} for a~more detailed explanation. 

\begin{proposition}
\label{regc-ax} {\sl Assuming the conditions~{\rm (E-NS)}, there exists
a~subsequence~$\Phi_{k_\ell} \doteq \widehat{p}_{k_\ell}+ \frac12 |\widehat{\ue}_{k_\ell}|^2$ such that for almost all~$t\in\psi(\overline{\Omega})$  the sequence $\Phi_{k_\ell}|_{S_{\mu}(t)}$ converges uniformly to
the constant~$\Phi(S_{\mu}(t))$ on every cycle $S_{\mu}(t)$ from Proposition~\ref{kmpTh1.1}, $\mu=1,\dots,N_t$.}
\end{proposition}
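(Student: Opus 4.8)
The plan is to carry over the Korobkov--Pileckas--Russo argument of \cite[Lemma~3.3]{kpr} (see also \cite[Section~5.4.2]{korobkov2024steady}): integrate the Bernoulli identity of the Navier--Stokes approximations~(\ref{NS-Ek}) along the level curves of the limiting stream function $\psi$ --- precisely the $C^1$ cycles $S_\mu(t)$ furnished by Proposition~\ref{kmpTh1.1} --- average over the level values by the coarea formula, and pass to the limit using that $\ve$ is tangent to those curves together with the strong convergence $\widehat{\mathbf{w}}_k\to\ve$ in $L^2_\loc(\Omega)$.

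First I would put the momentum equation in~(\ref{NS-Ek}) into Bernoulli form. Writing $(\widehat{\mathbf{w}}_k\cdot\nabla)\widehat{\mathbf{w}}_k=\nabla\tfrac12|\widehat{\mathbf{w}}_k|^2+\widehat\omega_k\,\widehat{\mathbf{w}}_k^\perp$, with $\widehat\omega_k=\partial_1(\widehat{\mathbf{w}}_k)_2-\partial_2(\widehat{\mathbf{w}}_k)_1$ the vorticity and $\widehat{\mathbf{w}}_k^\perp=(-(\widehat{\mathbf{w}}_k)_2,(\widehat{\mathbf{w}}_k)_1)$, this gives
\begin{equation*}
\nabla\Phi_k=\frac1{J_k}\,\Delta\widehat{\mathbf{w}}_k-\widehat\omega_k\,\widehat{\mathbf{w}}_k^\perp-\frac1{J_k}\bigl[(\mathbf{U}\cdot\nabla)\widehat{\mathbf{w}}_k+(\widehat{\mathbf{w}}_k\cdot\nabla)\mathbf{U}\bigr]+\frac1{J^2_k}\,\mathbf{g}\qquad\text{a.e. in }\mathcal B_k,
\end{equation*}
with $\Phi_k=\widehat{p}_k+\tfrac12|\widehat{\mathbf{w}}_k|^2$ (which differs from the functional in the statement only by terms of lower order in $1/J_k$). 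By $\|\nabla\widehat{\mathbf{w}}_k\|_{L^2(\Omega)}=1$, the local bound~(\ref{uboundz2}) on $\mathbf{U}$, and the uniform bound~(\ref{uef}) on $\mathbf{f}$, the terms with the factors $1/J_k$ and $1/J^2_k$ are under control; the two delicate ones are the convective term $\widehat\omega_k\widehat{\mathbf{w}}_k^\perp$ and the viscous term $\tfrac1{J_k}\Delta\widehat{\mathbf{w}}_k=\tfrac1{J_k}\nabla^\perp\widehat\omega_k$, only bounded in $L^s_\loc$, $s<2$. I would then fix $t$ in the full-measure set where Propositions~\ref{kmpTh2.1}, \ref{kmpTh2.2} and~\ref{kmpTh1.1} apply, so that $\psi^{-1}(t)$ is a finite disjoint union of $C^1$ cycles $S_\mu(t)\subset\Omega$ avoiding $A_\ve$, on each of which $\Phi$ equals a constant $\Phi(S_\mu(t))$ for $\mathscr{H}^1$-a.e. point; for small $\delta>0$ I would surround $S_\mu(t)$ by the connected component $V^\mu_\delta$ of $\{|\psi-t|<\delta\}$ containing it (chosen so small that $\ve$ is continuous, hence bounded, on $V^\mu_\delta$), so that for a.e. $t'\in(t-\delta,t+\delta)$ the set $\psi^{-1}(t')\cap V^\mu_\delta$ is a single $C^1$ cycle $S^\mu(t')$. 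On $S^\mu(t')$ one has $\nabla\psi=\ve^\perp$ normal and $\ve$ tangent, which yields the cancellations $\widehat{\mathbf{w}}_k^\perp\cdot\tau=(\widehat{\mathbf{w}}_k-\ve)^\perp\cdot\tau$ and $\widehat{\mathbf{w}}_k\cdot\mathbf{n}=(\widehat{\mathbf{w}}_k-\ve)\cdot\mathbf{n}$ along the curve.

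Next, for a.e. such $t'$, I would bound the oscillation of $\Phi_k$ on the closed cycle $S^\mu(t')$ by the integral of $|\nabla\Phi_k\cdot\tau|$, substitute the Bernoulli identity, integrate in $t'$, and use the coarea formula to reduce everything to two-dimensional integrals over the fixed bounded set $V^\mu_\delta$. The convective contribution becomes, thanks to the cancellation, $\lesssim\|\widehat\omega_k\|_{L^2(V^\mu_\delta)}\,\|\widehat{\mathbf{w}}_k-\ve\|_{L^2(V^\mu_\delta)}\to0$. For the viscous contribution one cannot just invoke size; instead I would use that along a \emph{closed} level cycle $\oint \tfrac1{J_k}\Delta\widehat{\mathbf{w}}_k\cdot\tau=\tfrac1{J_k}\oint\partial_{\mathbf{n}}\widehat\omega_k=\tfrac1{J_k}\int_{D}\Delta\widehat\omega_k$ ($D$ the enclosed region), which by the vorticity form of~(\ref{NS-Ek}) equals $\int_D\nabla\!\cdot(\widehat\omega_k\widehat{\mathbf{w}}_k)+\text{(small)}=\oint\widehat\omega_k(\widehat{\mathbf{w}}_k-\ve)\cdot\mathbf{n}+\text{(small)}$, so it too is absorbed by the strong convergence of $\widehat{\mathbf{w}}_k$. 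The outcome is $\int_{t-\delta}^{t+\delta}\operatorname{osc}_{S^\mu(t')}\Phi_k\,dt'\to0$, and a diagonal extraction over a countable family of values $t$ and radii $\delta$ yields a subsequence $k_\ell$ with $\operatorname{osc}_{S_\mu(t)}\Phi_{k_\ell}\to0$ for a.e. $t$. Finally, since $\widehat{\mathbf{w}}_k\to\ve$ and $\widehat{p}_k\to q$ in $L^r_\loc(\Omega)$ for all finite $r$, one has $\Phi_k\to\Phi$ in $L^r_\loc$, and combined with $\Phi\equiv\Phi(S_\mu(t))$ $\mathscr{H}^1$-a.e. on $S_\mu(t)$ and the vanishing oscillation, this forces $\Phi_{k_\ell}|_{S_\mu(t)}\to\Phi(S_\mu(t))$ uniformly.

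I expect the main obstacle to be precisely the viscous term $\tfrac1{J_k}\Delta\widehat{\mathbf{w}}_k$: it is \emph{not} negligible by magnitude --- the Stokes estimates with viscosity $1/J_k$ only give $\tfrac1{J_k}\|\Delta\widehat{\mathbf{w}}_k\|_{L^s_\loc}=O(1)$ --- so the structural rewriting above, recasting the viscous flux through a closed level curve as a convective flux that again sees $\widehat{\mathbf{w}}_k-\ve\to0$, is what makes the argument close. A second difficulty, already present in \cite{kpr}, is that $\widehat{p}_k$ lies only in $W^{1,s}_\loc$, $s<2$, so that $\Phi_k$ is not a priori continuous; it is the Morse--Sard-type theorems of Bourgain--Korobkov--Kristensen behind Proposition~\ref{kmpTh1.1} (together with a coarea/Fubini argument) that guarantee, for a.e. $t$, the $C^1$ regularity of the cycles $S_\mu(t)$, their disjointness from $A_\ve$, and the meaningfulness and continuity of $\Phi_k|_{S_\mu(t)}$. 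The unboundedness of $\Omega$ does not interfere, since all the estimates live on fixed bounded sets $V^\mu_\delta\Subset\Omega$.
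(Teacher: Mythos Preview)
The paper does not give its own proof of this proposition; it simply cites \cite[Lemma~3.3]{kpr} and \cite[Lemma~5.4.8]{korobkov2024steady}. Your sketch follows exactly that Korobkov--Pileckas--Russo strategy --- Bernoulli form of the rescaled equations, integration along the $C^1$ level cycles of $\psi$, coarea averaging, the cancellation $\ve^\perp\cdot\tau=0$ on $\psi$-level sets, and a diagonal extraction --- so in spirit and in substance you are reproducing the cited argument rather than deviating from it.

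One point deserves tightening. You bound the oscillation of $\Phi_k$ on $S^\mu(t')$ by $\int_{S^\mu(t')}|\nabla\Phi_k\cdot\tau|\,ds$, but then, for the viscous contribution, you switch to the \emph{signed} integral over the \emph{full} closed cycle in order to invoke the divergence theorem and the vorticity equation. That closed-loop integral is identically zero (it is the circulation of a gradient), so it cannot by itself control the absolute-value integral that your oscillation bound requires. In the actual KPR argument the viscous term is handled at the level of an \emph{arc} between two points $z_1,z_2\in S^\mu(t')$: one writes $\Phi_k(z_1)-\Phi_k(z_2)$ as a signed line integral, rewrites the viscous flux $\frac{1}{J_k}\int_{\text{arc}}\partial_{\mathbf n}\widehat\omega_k$ via the vorticity equation on the region bounded by the arc and a nearby reference curve, and only then recovers a term of the form $\int\widehat\omega_k(\widehat{\mathbf w}_k-\ve)\cdot\mathbf n$ that vanishes with the strong $L^2$ convergence. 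The structural idea you describe is the right one; it just has to be implemented on arcs, not on the full cycle. Once that is fixed, your outline matches the references the paper invokes.
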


Note that the identity $\Phi|_{S_{\mu}(t)}\equiv\const$ follows directly from the Bernoulli Law (Proposition~\ref{kmpTh2.2}).  Below we assume without loss of generality that {\bf the subsequence
$\Phi_{k_\ell}$ coincides with the whole sequence $\Phi_{k}$.}
Furthermore, we will refer to   $S_{\mu}(t)$  which
satisfy the~assertion of Proposition~\ref{regc-ax} as {\bf regular cycles}.

To proceed, we consider the following two
separate cases.

\smallskip

(a) The essential supremum of $\Phi$ is not attained at
infinity:
\begin{equation}\label{as-prev1}
\esssup\limits_{z\in\Omega}\Phi(z)>0.
\end{equation}

(b) The essential supremum of $\Phi$ is attained at
infinity:
\begin{equation}\label{as1-axxx} \esssup\limits_{z\in\Omega}\Phi(z)=0.
\end{equation}

\subsection*{The case $\esssup\limits_{z\in\Omega}\Phi(z)>0$.}
\label{EPcontr-axx}

Suppose that the inequality~(\ref{as-prev1}) is fulfilled. Then, using the results of the last subsection (especially, Propositions~\ref{kmpTh2.2}--\ref{regc-ax}), we can separate infinity by regular stream lines from the consideration, thus, this case will be easily reduced to the case of bounded domains considered in~\cite{korobkov2015solution} (see also  \cite[Chapter~5]{korobkov2024steady} for a~more detailed exposition). Indeed, using the~continuity properties of the Bernoulli pressure (see, e.g., Lemma~\ref{kmpTh2.1}$_3$), it is easy to prove the existence of a regular cycle $S\subset\Omega$ such that
\begin{equation}
\label{sm1}  \psi(S)=t\ne0,
\end{equation}
\begin{equation}
\label{sm2}  \Phi_k|_S\mbox{ \ converges uniformly to
the constant~}t_0=\Phi(S)>0,\end{equation}
and
\begin{equation}
\label{sm3}  \esssup\limits_{z\in\Omega_S}\Phi(z)>t_0,
\end{equation}
where we denote by $\Omega_S$ the domain surrounded by the cycle~$S$, i.e., $\partial\Omega_S=S$ (note, that $\bar\Omega_S$ is a bounded set diffeomorphic  to the closed unit disc).

Now we can repeat almost ``word by word" the arguments from~\cite[Section~5.5]{korobkov2024steady} to get the contradiction. The only differences are as follows: now the bounded domain $\Omega'\doteq \Omega\cap\Omega_S$ plays the role of $\Omega$ from \cite[Section~5.5]{korobkov2024steady}, and, respectively, the regular cycle $S$ plays the role of the external boundary component $\Gamma_0$ from \cite[Section~5.5]{korobkov2024steady}.

\subsection*{The case $\esssup\limits_{z\in\Omega}\Phi(z)=0$.}\label{Euler-contr-2}

Suppose now that (\ref{as1-axxx}) holds, i.e., the maximum of
$\Phi$ is attained at
infinity. It can not be reduced to the case of bounded domains, because the unbounded boundary components $\Gamma_j$ with maximum value $\Phi|_{\Gamma_j}\equiv0$ can not be excluded from the consideration anymore, so we are not able to apply the technique from~\cite{korobkov2015solution} and \cite[Chapter~5]{korobkov2024steady}. But, fortunately, this case can be covered by a~much more elementary argument: really, it turns out, that for this case assumption~(E) already imply the required identity~(\ref{pp8}). 
The following new result is of independent interest for studying solutions to the Euler system in unbounded plane domains.

\begin{lemma}
\label{lemma-E-0} {\sl Let the formulae {\rm(\ref{Euler0})}, {\rm(\ref{310}}, {\rm(\ref{bf0})}, and {\rm(\ref{as1-axxx})} be fulfilled, then 
$\ve\equiv\0$.
}
\end{lemma}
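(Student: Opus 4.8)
The plan is to exploit the fact that, under hypothesis~\eqref{as1-axxx}, the Bernoulli pressure $\Phi = q + \tfrac12|\ve|^2$ attains its essential supremum $0$ on the unbounded boundary components $\Gamma_j$ and ``at infinity'', so that every level set of $\psi$ at a nonzero value is a compact cycle on which $\Phi$ is constant by the Bernoulli law (Proposition~\ref{kmpTh2.2}). Since $\psi \to 0$ at infinity by~\eqref{bfs00} and $\psi \equiv 0$ on $\Gamma_1 \cup \dots \cup \Gamma_J$ by~\eqref{bfs0}, the support of $\psi$ is essentially the union of a compact set together with thin neighborhoods of the outlets' axes; more to the point, for any $t \neq 0$ the connected components of $\{\psi = t\}$ that meet the superlevel set $\{\psi \geq t\}$ (or $\{\psi \leq t\}$, depending on sign) are bounded. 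First I would apply the Morse--Sard theorem of Bourgain--Korobkov--Kristensen (Proposition~\ref{kmpTh1.1}) to fix a full-measure set of regular values $t$ for which $\psi^{-1}(t)$ is a finite disjoint union of $C^1$ cycles $S_\mu$ avoiding the exceptional set $A_\ve$. On each such cycle $\Phi$ equals a constant $\Phi(S_\mu)$, and since $\Phi \leq 0$ a.e. while $\Phi \to 0$ at infinity (using~\eqref{co-3} and $|\ve| \to 0$ at infinity in the outlets, which follows from $\nabla\ve \in L^2$ and the channel geometry), I would argue that these constants satisfy $\Phi(S_\mu) \leq 0$.

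The core of the argument is then a maximum-principle / monotonicity statement along the stream function. Because $\nabla\Phi = \omega\,\ve^\perp = \omega\,\nabla\psi$ (see~\eqref{pp9}), the function $\Phi$ is constant on connected level sets of $\psi$, i.e.\ $\Phi$ can be viewed (modulo the bad set) as a function of $\psi$ alone on each ``tube'' of streamlines. The key step is to rule out the possibility that $\esssup \Phi = 0$ is attained only in the limit: if $\Phi < 0$ on a set of positive measure and $\Phi \to 0$ at infinity, one produces, as in~\cite[Section~5.5]{korobkov2024steady}, a regular cycle $S$ with $\psi(S) = t \neq 0$, $\Phi|_S \to t_0 < 0$ uniformly along the approximating sequence, and $\esssup_{\Omega_S}\Phi > t_0$ on the bounded region $\Omega_S$ it encloses — then the bounded-domain contradiction argument applies verbatim. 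The remaining possibility is that $\Phi \equiv 0$ a.e.\ in $\Omega$. In that case $0 = \nabla\Phi = \omega\,\ve^\perp$ a.e., so $\omega = 0$ on $\{\ve \neq 0\}$; combined with $\Phi \equiv 0$, i.e.\ $q = -\tfrac12|\ve|^2$, and the Euler momentum equation $(\ve\cdot\nabla)\ve = -\nabla q = \tfrac12\nabla|\ve|^2$, one gets $(\ve\cdot\nabla)\ve = \tfrac12\nabla|\ve|^2$, which together with the identity $(\ve\cdot\nabla)\ve = \tfrac12\nabla|\ve|^2 - \ve^\perp\omega$ (valid for divergence-free planar fields) forces $\ve^\perp\omega = 0$ a.e.\ — consistent with the above. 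I would then use $\psi \in W^{2,2}(\R^2)$ with $\nabla\psi \in L^2(\R^2)$, $\psi \to 0$ at infinity, $\psi \equiv 0$ on the $\Gamma_j$, and $\Delta\psi = -\omega = 0$ a.e.\ on $\{\nabla\psi \neq 0\}$ to conclude $\psi$ is (locally) constant, hence $\nabla\psi = \ve^\perp = 0$, i.e.\ $\ve \equiv \0$.

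More carefully, to close the last step I would invoke the structure of $\psi$ as a $W^{2,2}_{\loc}$ function whose a.e.\ Laplacian vanishes where its gradient is nonzero: on any regular level set $\{\psi = t\}$, $t \neq 0$, which is a finite union of cycles bounding compact regions, integrating $\Delta\psi$ over the enclosed region and using $\Delta\psi = 0$ a.e.\ on $\{\ve \neq 0\}$ together with the coarea formula shows the flux of $\nabla\psi$ across the cycle vanishes; running $t$ through a full-measure set and using $\psi \to 0$ at infinity and $\psi|_{\Gamma_j} = 0$ propagates $\psi \equiv \const = 0$ outward from infinity. The main obstacle I anticipate is precisely the careful handling of the ``at infinity'' behaviour in the outlets $\Omega_j$: one must show that the superlevel sets $\{|\Phi| \geq \varepsilon\}$ are bounded (so that the Morse--Sard cycles separating infinity genuinely exist and enclose compact regions), which requires combining the global bound $\nabla\ve \in L^2(\R^2)$ from~\eqref{nablav2-1}, the decay of $q$ from Proposition~\ref{pp}, and the quasi-one-dimensional geometry of the channels to get $\ve(z) \to 0$ as $|x_j| \to \infty$ in each $\Omega_j$ — this is where the unboundedness of $\Omega$, absent from the bounded-domain treatment of~\cite{korobkov2015solution}, genuinely enters and must be dealt with by hand.
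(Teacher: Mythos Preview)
Your approach has a genuine gap in the case $\Phi \not\equiv 0$. You claim that when $\Phi < 0$ on a set of positive measure one can produce a regular cycle $S$ with $\Phi|_S = t_0 < 0$ and $\esssup_{\Omega_S}\Phi > t_0$, so that ``the bounded-domain contradiction argument applies verbatim.'' But this is precisely what fails under hypothesis~\eqref{as1-axxx}, and the paper says so explicitly just before stating the lemma: since $\esssup\Phi = 0$ is attained only on the unbounded components $\Gamma_j$ and at infinity, no bounded cycle $S = \{\psi = t\}$ (with $t \ne 0$, hence disjoint from every $\Gamma_j$) can enclose a region where $\Phi$ approaches its supremum. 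There is no mechanism forcing $\esssup_{\Omega_S}\Phi > t_0$; in the natural picture $\Phi$ decreases as one moves inward across nested streamlines, so $\esssup_{\Omega_S}\Phi = t_0$ and the Korobkov--Pileckas--Russo machinery yields nothing. Moreover, that machinery relies on the (E-NS) structure (Proposition~\ref{regc-ax}, uniform convergence of the approximating $\Phi_k$ on cycles), whereas the lemma is stated---and proved in the paper---under condition (E) alone.

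The paper's actual proof bypasses the Morse--Sard and level-set apparatus entirely and is a short Pokhozhaev-type argument. One invokes the divergence identity
\[
\div\bigl(q\,z + (\ve\cdot z)\,\ve\bigr) = 2\Phi,
\]
integrates it over $\Omega^t$, and uses the sign information $\Phi \le 0$ a.e.\ and $\check q_i \le 0$ on the obstacles (both forced by~\eqref{as1-axxx}) to deduce that
\[
\sum_{j=1}^{J}\int_{\sigma_j^t}\bigl(|q| + |\ve|^2\bigr)\,dy_j \;\ge\; \frac{c_1}{t}
\]
for all large $t$. This contradicts $|q| + |\ve|^2 \in L^1(\R^2)$ (which holds by~\eqref{bp4} and~\eqref{co-5}), so $\ve \equiv \0$. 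The subcase $\Phi \equiv \const$ that you treat with a separate streamline argument is dismissed in one line: then $\omega \equiv 0$ by~\eqref{pp9}, so $\ve$ is a harmonic vector field on $\R^2$ vanishing at infinity, hence identically zero.
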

\medskip
Indeed, suppose that this is not true, i.e., that the above assumptions are valid, but
$ \ve\ne\const$,
in other words, $\ve$ is not identically zero. Then, we must have $\Phi\ne\const$ as well (otherwise by \eqref{pp9} the vorticity  is identically zero, and $\ve$ is a nonconstant harmonic vector field on the whole plane going to zero at infinity, a~contradiction). Therefore,
 \begin{equation}\label{i-as1} \essinf\limits_{z\in\Omega}\Phi(z)<0.
\end{equation}
The main tool here is the following well-known elementary identity for Euler solutions to~(\ref{Euler0})$_{1-2}$ in two dimensions:
 \begin{equation}\label{i-as2} \div\biggl(q z+(\ve\cdot z)\ve\biggr)=2\Phi
\end{equation}
(this identity was used, in particular, in \cite{kpr,korobkov2015solution}, and in many other papers).
Here, $z = (z_1, z_2)$ and $\ve\cdot z=v_1z_1+v_2z_2$ stands for the inner product. Recall that
 \begin{equation}\label{i-as3} \ve|_{\partial\Omega}\equiv\0,
\qquad q(z)\equiv 0,\quad\forall z\in\Gamma_1\cup\dots\cup\Gamma_J,
\end{equation}
and
\begin{equation}\label{i-as5} q(z)\equiv \check q_i\le0,\quad\forall
z\in\Sigma_i, \ i\in\{1,\dots,I\},\end{equation}
the last claim follows from~(\ref{bp2}) and~(\ref{as1-axxx}).

Recall the definition of~$\Omega^t$ from~(\ref{Omega_ab}). 
Integrating (\ref{i-as2}) over $\Omega^t$, we get
\begin{align}\label{i-as8}
	2\int\limits_{\Omega^t}\Phi&=
\sum\limits_{i=1}^I\check q_i\int\limits_{\partial\Sigma_i}z\cdot\n+
\sum\limits_{j=1}^J\int\limits_{\sigma^t_j}\bigl(q z\cdot\n+ v_{1}^{(j)} \ve\cdot z\bigr)\,dy_j \nonumber \\
 &=-2\sum\limits_{i=1}^I\check q_i\,|\Sigma_i|+\sum\limits_{j=1}^J\int\limits_{\sigma^t_j}\bigl(q z\cdot\n + v_{1}^{(j)} \ve\cdot z \bigr)\,dy_j.
\end{align}
Here, $\n$ is the~outward normal vector with respect to~$\partial\Omega^t$, and  $|\Sigma_i|$ is the~measure of the~bounded domains~$\Sigma_i$,  and $v_1^{(j)}$ is the first component of $\mathbf{v}$ in the $(x_j, y_j)$-coordinates.
Therefore, there holds that
\begin{align}\label{i-as9}
	\sum\limits_{j=1}^J\int\limits_{\sigma^t_j}\bigl(|q|+\ve^2\bigr)\,dy_j&\ge-\frac{c_0}t
\sum\limits_{j=1}^J\int\limits_{\sigma^t_j}\bigl(q z\cdot\n + v_{1}^{(j)} \ve\cdot z\bigr)\,dy_j \nonumber \\
&=-
\frac{2c_0}t
\biggl(\int\limits_{\Omega^t}\Phi+
\sum\limits_{i=1}^I\check q_i\, |\Sigma_i|\biggr)\overset{(\ref{as1-axxx}),(\ref{i-as5})}\ge\frac{c_1}t
\end{align}
with some positive constants~$c_0,c_1$. But the last inequality definitely contradicts the fact that
$|q|+\ve^2\in L^1(\R^2)$ (see (\ref{bp4}), (\ref{co-5})\,). We have proved Lemma~\ref{lemma-E-0}, and this finishes the proof of Theorem~\ref{Euler=zero}.  $\qed$

\medskip

\noindent
{\bf Acknowledgements.} The research of Filippo Gazzola is supported by the grant \textit{Dipartimento di Eccellenza 2023-2027},
issued by the Ministry of University and Research (Italy) and is a~part of the PRIN project 2022 ``Partial differential equations and related geometric-functional inequalities'', financially supported by the EU, in the framework of the ``Next Generation EU initiative''; the first author is also partially suppported by INdAM. The research of Gianmarco Sperone is currently supported by the \textit{Chilean National Agency for Research and Development} (ANID) through the \textit{Fondecyt Iniciación} grant 11250322. The research of Xiao Ren is supported by  the \textit{China Postdoctoral Science
Foundation} under Grant Number BX20230019.
\par\smallskip
\noindent
{\bf Data availability statement.} Data sharing not applicable to this article as no datasets were generated or analyzed during the current study.
\par\smallskip
\noindent
{\bf Conflict of interest statement}.  The authors declare that they have no conflict of interest.

\phantomsection
\addcontentsline{toc}{section}{References}
\bibliographystyle{abbrv}

\vspace{5mm}

\small{
\noindent
\hspace{0.1mm}
\begin{minipage}{140mm}
	\textbf{Filippo Gazzola}\\
	Dipartimento di Matematica\\
	Politecnico di Milano\\
	Piazza Leonardo da Vinci 32\\
	20133 Milan - Italy\\
	E-mail: filippo.gazzola@polimi.it
\vspace{0.5cm}	
\end{minipage}
\newline
\vspace{0.5cm}
\noindent
\begin{minipage}{100mm}
	\textbf{Mikhail Korobkov}\\
	School of Mathematical Sciences\\
	Fudan University\\
	Handan Road 220\\
	200433 Shanghai - China\\
	and\\
	Sobolev Institute of Mathematics\\
	Siberian Branch of the Russian Academy of Sciences\\
	Akademika Koptyuga Prospekt 4\\
	630090 Novosibirsk - Russia\\
	E-mail: korob@math.nsc.ru
\end{minipage}
\newline
\vspace{0.5cm}
\begin{minipage}{100mm}
	\textbf{Xiao Ren}\\
	Center for Applied Mathematics\\
Fudan University\\
Handan Road 220\\
	200433 Shanghai - China\\
	E-mail: xiaoren18@fudan.edu.cn
\end{minipage}
\newline
\vspace{0.5cm}
\begin{minipage}{100mm}
		\textbf{Gianmarco Sperone}\\
		Facultad de Matemáticas\\
		Pontificia Universidad Católica de Chile\\
		Avenida Vicuña Mackenna 4860\\
		7820436 Santiago - Chile\\
		E-mail: gianmarco.sperone@uc.cl
\end{minipage}
}	
\end{document}